\theoremstyle{plain}
\newtheorem{thm}{Theorem}[section]
\newtheorem{lem}[thm]{Lemma}
\newtheorem{prop}[thm]{Proposition}
\newtheorem{cor}[thm]{Corollary}
 \newtheoremstyle{TheoremNum}
        {\topsep}{\topsep}              %%% space between body and thm
        {\itshape}                      %%% Thm body font
        {}                              %%% Indent amount (empty = no indent)
        {\bfseries}                     %%% Thm head font
        {.}                             %%% Punctuation after thm head
        { }                             %%% Space after thm head
        {\thmname{#1}\thmnote{ \bfseries #3}}%%% Thm head spec
    \theoremstyle{TheoremNum}
    \newtheorem{thmn}{Theorem}
\theoremstyle{definition}
\newtheorem{rem}{Remark}
\newtheorem{ex}{Example}
\newtheorem{defn}[thm]{Definition}
\DeclareMathOperator{\conv}{conv}
\DeclareMathOperator{\Spec}{Spec}
\DeclareMathOperator{\Relint}{Relint}
\DeclareMathOperator{\Edr}{Edr}
\DeclareMathOperator{\TV}{TV}
\DeclareMathOperator{\G}{G}
\DeclareMathOperator{\di}{dim}
\DeclareMathOperator{\casei}{(i)}
\DeclareMathOperator{\caseii}{(ii)}
\DeclareMathOperator{\caseiii}{(iii)}
\DeclareMathOperator{\caseiv}{(iv)}
\DeclareMathOperator{\casev}{(v)}
\DeclareMathOperator{\Def}{Def}
\DeclareMathOperator{\Val}{Val}
\newcommand{\RR}{\mathbb{R}}      % for Real numbers
\newcommand{\QQ}{\mathbb{Q}}      % for Real numbers
\newcommand{\ZZ}{\mathbb{Z}}      % for Integers
\newcommand{\CC}{\mathbb{C}}      % for Complex numbers
\newcommand{\Ione}{\mathcal{I}_G^{(1)}}
\newcommand{\Itwo}{\mathcal{I}_G^{(2)}}
\newcommand{\Ithree}{\mathcal{I}_G^{(3)}}
\begin{document}

\title{On rigidity of toric varieties arising from bipartite graphs}

\author{Irem Portakal}

\address{Otto-von-Guericke-Universität Magdeburg\\ Department of Mathematics, Magdeburg, Germany}
\email{irem.portakal@ovgu.de}
\keywords{
bipartite graph, edge cone, toric variety, rigidity}
\subjclass[2010]{14B07, 14M25, 52B20 (Primary). 05C69 (Secondary).}

\begin{abstract}
One can associate to a bipartite graph a so-called edge ring whose spectrum is an affine normal toric variety. We characterize the faces of the (edge) cone associated to this toric variety in terms of some independent sets of the bipartite graph. By applying to this characterization the combinatorial study of deformations of toric varieties by Altmann, we present certain criteria for their rigidity purely in terms of graphs.  

\end{abstract}

%%Graphical abstract
%\begin{graphicalabstract}
%\includegraphics{grabs}
%\end{graphicalabstract}

%%Research highlights
%\begin{highlights}
%\item Research highlight 1
%\item Research highlight 2
%\end{highlights}

%% \linenumbers
\maketitle

\section{Introduction}\label{introduction}
We want to investigate rigidity of a certain family of affine toric varieties by utilizing combinatorial tools from bipartite graphs. While the investigation of rigidity in general is difficult, we are able to present simple criteria for the rigidity in the case where the toric variety is constructed from a graph. 

\subsection{Toric varieties arising from graphs}
Let $G$ be a simple graph. We denote its vertex set as $V(G)$ and its edge set as $E(G)$. One defines the {\it{edge ring}} associated to $G$ as $$\Edr(G):=\mathbb{C}[t_{i}t_{j} \ | \ \{i,j\} \in E(G), \ i,j \in V(G)].$$ 

\noindent Consider the surjective ring morphism
\begin{eqnarray*}
\mathbb{C}[x_e \ | \ e \in E(G)] & \rightarrow & \Edr(G) \\
x_e & \mapsto & t_i t_j
\end{eqnarray*}
where $e = \{i,j\} \in E(G)$.  The kernel $I_G$ of this morphism is called the \textit{edge ideal}. The associated toric variety to the graph $G$ is defined as $$\TV(G):=\Spec(\mathbb{C}[x_e \ | \ e \in E(G)]/I_G) = \Spec(\mathbb{C}[\sigma_G^{\vee} \cap M])$$ where $\sigma_G^{\vee}$ is called the \emph{dual edge cone}. The edge ring $\Edr(G)$ is an integrally closed domain and hence $\TV(G)$ is a normal variety. For more details on the theory, we refer the reader to \cite{binomialbook}. In this paper, we focus on the case where $G$ is a bipartite graph. We examine the first order deformations of $\TV(G)$, more precisely we investigate certain criteria for the bipartite graph $G$ such that the first order deformations of $\TV(G)$ are all trivial, that is~$\TV(G)$ is {\it rigid}. 

\subsection{Examples of rigid varieties}
The first example of a rigid singularity is the cone over the Segre embedding $\mathbb{P}^r \times \mathbb{P}^1$ in $\mathbb{P}^{2r +1}$ ($r \geq 1)$ which has been introduced by Grauert and Kerner in \cite{firstrigidpaper}. We will observe that this is in fact the toric variety associated to the complete bipartite graph $K_{r+1,2}$. In \hyperref[rigidcomplete]{Theorem \ref*{rigidcomplete}}, we also give an alternative proof to the classical fact in \cite{segreler} that $\TV(K_{m,n})$ is rigid except for $m=n = 2$. 

Another well-known class of rigid varieties is introduced by Schlessinger in \cite{schlessingerrigid}, which are isolated quotient singularities with dimension greater than three.\\

\noindent Many aspects of the infinitesimal deformations of affine normal toric varieties have been studied by K. Altmann in \cite{minkowskialtmann},\cite{invencionesaltmann} and \cite{altmann96}. In these papers, it has been shown that the first order deformations of affine normal toric varieties are multi-graded. The homogeneous pieces  $T_{X}^1(-R)$ are given by a so-called \textit{deformation degree} $R \in M$, where $M$ is the character group of the torus. For the computation of $T_{X}^1(-R)$, one first considers the \emph{crosscut} in degree $R$, which is the intersection of the associated cone of the toric variety with an affine space defined by $R$. Next, one examines the two-dimensional faces of this intersection and  loosely speaking, how these two-dimensional faces are connected to each other. In [\cite{minkowskialtmann}, Corollary 6.5.1], it was proved that if the affine toric variety is $\mathbb{Q}$-Gorenstein and $\QQ$-factorial in codimension 3 then it is rigid. Another approach for the deformations of affine toric varieties has been discussed in \cite{filip18} via Hochschild cohomology which provides concrete calculations for the homogenous piece $T^1_X(-R)$. We follow the technique by Altmann which we present in \hyperref[deformationtheory]{Section \ref*{deformationtheory}} and we introduce many additional interesting families (not necessarily $\QQ$-Gorenstein nor an isolated singularity) of rigid toric varieties throughout this paper.

\subsection{Studying rigidity in terms of graphs}
It suffices for our purposes to assume that $G \subseteq K_{m,n}$ is a connected bipartite graph. This has the following reason: if $G = G_1 \sqcup G_2 \subset K_{m,n}$ is not connected, then one calculates the edge cones $\sigma^{\vee}_{G_1} \subseteq M^1_{\QQ}$ and $\sigma^{\vee}_{G_2} \subseteq M^2_{\QQ}$ for the connected components $G_1$ and $G_2$ and obtains $\sigma^{\vee} _G = \sigma^{\vee}_{G_1} + \sigma^{\vee}_{G_2} \subseteq  M^1_{\QQ} \oplus  M^2_{\QQ}$. Hence, the associated toric variety is simply $\TV(G) = \TV(G_1) \times \TV(G_2)$. If one of these toric varieties is not rigid, then $\TV(G)$ is also not rigid. If every connected component of $G$ yields a rigid associated toric variety, then $\TV(G)$ is rigid.\\

\noindent The first attempt on investigating the rigidity of $\TV(G)$ has been done in \cite{herzog2015} where one considers the connected bipartite graph $G\subsetneq K_{n,n}$ with one edge removed from the complete bipartite graph $K_{n,n}$. Here $n$ denotes the number of vertices of the disjoint sets of $K_{n,n}$. They prove in Proposition 7.1 that $\TV(G)$ is rigid for $n\geq 4$ and $\TV(G)$ is not rigid for $n=3$. The proof is done by techniques from commutative algebra which we do not utilize.  \\

\noindent In order to apply the constructions in \hyperref[deformationtheory]{Section \ref*{deformationtheory}}, we first describe the  \textit{edge cone} $\sigma_G$ associated to the toric variety $\TV(G)$. A characterization for the extremal ray generators of the edge cone $\sigma_G$ was given by C.H. Valencia and R.H. Villarreal in \cite{valenciavillareal}. In \hyperref[11thm]{Theorem \ref*{11thm}}, we refine this approach and present a one-to-one correspondence between the set of extremal ray generators of the cone $\sigma_G$ and the so-called {\it first independent sets}. The advantage of using this approach is that we are able to determine the faces of $\sigma_G$. For this, one defines a spanning subgraph $\G\{A\} \subseteq G$ associated to the first independent set $A$.  In \hyperref[facetheorem]{Theorem \ref*{facetheorem}}, we prove that a set $S$ of $d$ first independent sets, equivalently a set of $d$ extremal rays of $\sigma_G$, spans a face of dimension $d$ if and only if $\bigcap_{A \in S} \G\{A\}$ has $d+1$ connected components. In particular, we describe the supporting hyperplane of a face by the degree sequence of this intersection graph.  \\

\noindent This result allows us to prove that $\TV(G)$ is smooth in codimension 2 (\hyperref[smooth2co]{Theorem \ref*{smooth2co}}). We provide a detailed exposition of three dimensional faces of $\sigma_G$ in \hyperref[threefacesection]{Section \ref*{threefacesection}} motivated by \hyperref[altmann96]{Theorem \ref*{altmann96}}. In particular, we determine when the edge cone $\sigma_G$ admits a non-simplicial three-dimensional face (\hyperref[existenceofadeg]{Theorem \ref*{existenceofadeg}}). In that case, we prove the following.

\begin{thmn}[\ref{4tuplenonrigid}]
Let $G \subseteq K_{m,n}$ be a connected bipartite graph. Assume that the edge cone $\sigma_G$ admits a three-dimensional non-simplicial face. Then $\TV(G)$ is not rigid.
\end{thmn}

\noindent As an application to the general investigation of the rigidity of $\TV(G)$ in \hyperref[generalch]{Section \ref*{generalch}}, we present the characterization of rigid affine toric varieties $\TV(G)$, where $G$ has exactly one two-sided first independent set $C=C_1 \sqcup C_2$.  In other words, we consider the connected bipartite graphs $G \subset K_{m,n}$ where we remove all the edges between two vertex sets $\emptyset \neq C_1\subsetneq U_1$ and $\emptyset \neq C_2 \subsetneq U_2$. Here, $U_1$ and $U_2$ denote the disjoint sets of the bipartite graph $G$. For the case where $|C_1|=1$ and $|C_2|=1$, we recover the result in [\cite{herzog2015}~Proposition 7.1] without the assumption of $m=n$. 

\begin{thmn} [\ref{onetwosided}]
Let $G \subsetneq K_{m,n}$ be a connected bipartite graph with exactly one two-sided first independent set $C \in \Ione$. Then 
\begin{enumerate}
\item $\TV(G)$ is not rigid, if $|C_1|=1$ and $|C_2| = n-2$ or if $|C_1|=m-2$ and $|C_2|=1$.
\item $\TV(G)$ is rigid, otherwise.
\end{enumerate} 
\end{thmn}

\noindent Throughout this work, many examples have been checked using the software \texttt{Polymake} \cite{polymake} and the computer algebra system \texttt{Singular} \cite{singular}. In \cite{portakal19}, we present the function which takes as an input the dual edge cone and outputs the information about rigidity of the associated toric variety. In particular it draws the representative picture of the crosscut for any given deformation degree $R\in M$.

\setcounter{tocdepth}{1}
\tableofcontents

\section{Characterization of the faces of an edge cone}\label{facechar}

\subsection{Edge Cones}\label{edgeideals}
The edge ideals $I_G$ for finite connected graphs having no loop and no multiple edge were studied in \cite{hibiwalk}. They are characterized explicitly in terms of primitive even closed walks and in particular of cycles without a chord in the bipartite case. Throughout this paper, we focus on the bipartite case and investigate the corresponding edge cones. Let $G\subseteq K_{m,n}$ be a connected bipartite graph and denote its disjoint sets by $U_1$ and $U_2$. We label the vertices in $U_1$ as $\{1, \ldots, m\}$ and the vertices in $U_2$ as $\{m+1, \ldots, m+n\}$. Let $e^{i}$ denote a canonical basis element of $\mathbb{Z}^{m} \times 0$ and  $f^{j}$ denote a canonical basis element of $0 \times \ZZ^n$. By construction of the edge ideal, one obtains that the dual edge cone $\sigma^{\vee}_G$ is generated by the ray generators $e^i + f^j \in \ZZ^{m+n}$, for $\{i,j\} \in E(G)$.

\begin{prop}\label{bubaslangic}
Let $G \subseteq K_{m,n}$ be a connected bipartite graph. Then the dimension of the dual edge cone $\sigma_G^{\vee}$ is $m+n-1$. 
\end{prop}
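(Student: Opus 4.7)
The plan is to establish the two inequalities $\dim \sigma_G^\vee \leq m+n-1$ and $\dim \sigma_G^\vee \geq m+n-1$ separately.

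For the upper bound, I would observe that every extremal generator $e^i + f^j$ of $\sigma_G^\vee$ satisfies the single linear relation
\[
\sum_{k=1}^{m} x_k \;-\; \sum_{l=1}^{n} x_{m+l} \;=\; 0,
\]
since both the $U_1$-coordinates and the $U_2$-coordinates of $e^i+f^j$ sum to $1$. Hence $\sigma_G^\vee$ is contained in this hyperplane of $\QQ^{m+n}$, which immediately gives $\dim \sigma_G^\vee \leq m+n-1$.

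For the lower bound, I would exploit that a connected graph on $m+n$ vertices admits a spanning tree $T \subseteq G$ with exactly $m+n-1$ edges, and it suffices to prove that the vectors $\{e^i + f^j : (i,j) \in E(T)\}$ are linearly independent in $\QQ^{m+n}$. This I would verify by induction on $m+n$, peeling off a leaf: pick a leaf vertex $v$ of $T$ incident to the unique edge $(v,w) \in E(T)$. Since $v$ appears in only one edge of $T$, the $v$-th coordinate of $e^v + f^w$ (respectively $f^v + e^w$) equals $1$, while every other generator $e^i + f^j$ with $(i,j) \in E(T)\setminus\{(v,w)\}$ has $v$-th coordinate equal to $0$. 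So in any linear relation among these vectors, the coefficient of $e^v + f^w$ must vanish. Deleting $v$ from $T$ yields again a tree on $m+n-1$ vertices, with the bipartition inherited from $G$, and by the induction hypothesis the remaining $m+n-2$ vectors are linearly independent, completing the step. Combining both bounds gives the claim.

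I do not expect a real obstacle here; the only thing that requires mild care is to track whether the leaf $v$ belongs to $U_1$ or $U_2$ in the inductive step, but either way the argument is symmetric and reduces to a strictly smaller connected bipartite tree. (Alternatively, this amounts to the well-known fact that the unsigned vertex-edge incidence matrix of a connected bipartite graph has rank $|V(G)|-1$, applied to the spanning tree $T$.)
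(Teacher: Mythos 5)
Your proof is correct. The upper bound is essentially the same observation the paper makes (the rows of the incidence matrix sum to the same vector on each side of the bipartition, giving the hyperplane $\sum_{k\le m} x_k = \sum_{l>m} x_l$). For the lower bound, however, you take a genuinely different route: you extract a spanning tree and show the corresponding $m+n-1$ column vectors are linearly independent by a leaf-peeling induction. The paper instead argues dually, by bounding the cokernel of the incidence matrix $A_G$: any $x$ orthogonal to every column must satisfy $x_i + x_j = 0$ for each edge $(i,j)$, and connectedness forces $x$ to lie in the one-dimensional span of $\sum_i e_i - \sum_j f_j$, so the corank is at most one. Both arguments establish the classical fact that the unsigned incidence matrix of a connected bipartite graph has rank $|V|-1$; yours is more constructive (it exhibits an explicit maximal independent set of generators, which could be reused if one wanted a $\ZZ$-basis or an explicit triangulation), while the paper's cokernel argument is shorter and dovetails with the later identification of $(\sigma_G^\vee)^\perp$ with $\overline{(1,-1)}$.
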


\begin{proof}
Let $A_G$ be the (incidence) matrix whose columns are the ray generators of $\sigma_G^{\vee}$. Suppose that $x \in \QQ^{m+n}$ is an element of $\text{coker}(A_G)$. Then $x_i + x_j = 0$ whenever there is a path from vertex $i$ to vertex $j$. Since $G$ is connected, we obtain that the corank of $A_G$ is at most one. However the rows of $A_G$ are linearly dependent and therefore the rank of $A_G$ is smaller than or equal to $m+n-1$. It follows that $\dim \sigma_G^{\vee} = m+n-1$.
\end{proof}

\begin{rem} If $G$ is not a tree, then the generators of the dual edge cone $\sigma^{\vee}_G$ in $\QQ^{m+n}$ are linearly dependent. The relations are formed by the cycles without a chord of $G$. If $G$ is a tree, $\sigma_G^{\vee}$ has $m+n-1$ generators. In both cases, the dual cone $\sigma_G^{\vee}$ is not a full dimensional cone in the vector space $\QQ^{m+n}$. Equivalently, the edge cone $\sigma_G \subseteq \QQ^{m+n}$ is not strongly convex.

\noindent We calculate $(\sigma_G^{\vee})^{\bot}$ as
 \[\{a \in \QQ^{m+n}\ | \ \langle b,a \rangle =0 \text{ for all } b  \in \sigma_G^{\vee} \} = \big\langle \sum_{i=1}^m e_i - \sum_{j=1}^n f_j \big\rangle.\]
 
\noindent The one-dimensional subspace $(\sigma_G^{\vee})^{\bot}$ is the minimal face of $\sigma_G \subseteq \QQ^{m+n}$. We denote it by $\overline{(1,-1)}$. Hence we consider the cone $\sigma_G/ \overline{(1,-1)} \subseteq \QQ^{m+n} / \overline{(1,-1)}$ which is a strongly convex polyhedral cone. Therefore we set the lattices we use for the \emph{edge}  and \emph{dual edge cone} as follows:
\[N:= \ZZ^{m+n}/\overline{(1,-1)} \text{ and } M:=\ZZ^{m+n} \cap \overline{(1,-1)}^{\bot}.\]
We denote their associated vector space as $N_{\QQ}:=N \otimes_{\ZZ}{\mathbb{Q}}$ and $M_{\QQ}:=M \otimes_{\ZZ}{\mathbb{Q}}$. In order to distinguish the elements of these vector spaces, we denote the ones in $N_{\QQ}$ by normal brackets and the ones in $M_{\QQ}$ by square brackets. For the same reason, we denote the canonical basis elements as $e_i \in N_{\QQ}$ and $e^i \in M_{\QQ}$.  In particular, we follow the notation of \cite{toricbook} for cones and toric varieties.
\end{rem}

\subsection{Description of the extremal rays of an edge cone}\label{extremalrayssection}
We start with certain definitions from Graph Theory. Although these definitions do also work for an arbitrary abstract graph $G$, we preserve our assumption of $G \subseteq K_{m,n}$ being connected and bipartite. 
\begin{defn} 
\begin{enumerate}
\item[]
\item A nonempty subset $A$ of $V(G)$ is called an \textit{independent set} if it contains no adjacent vertices.
\item The \textit{neighbor set} of $A \subseteq V(G)$ is defined as $$N(A) : = \{v \in V(G) \ | \ v \text{ is adjacent to some vertex in } A\}.$$
\item The \textit{supporting hyperplane} of the dual edge cone $\sigma^{\vee}_G \subseteq M_{\QQ}$ associated to an independent set $\emptyset \neq A $ is defined as $$\mathcal{H}_A : = \{x \in M_{\mathbb{Q}} \ | \ \sum_{v_i \in A} x_i = \sum_{v_i \in N(A)} x_i\}.$$
\end{enumerate}
\end{defn}
\noindent Note that since no pair of vertices of an independent set $A$ is adjacent, we obtain that $A \cap N(A) =\emptyset$.
\begin{defn}
\begin{enumerate}
\item[]
\item  A subgraph of $G$ with the same vertex set as $G$ is called a \textit{spanning subgraph} (or \textit{full subgraph}) of $G$.
\item Let $A\subseteq V(G)$ be a subset of the vertex set of $G$. The \textit{induced subgraph} of $A$ is defined as the subgraph of $G$ formed from the vertices of $A$ and all of the edges connecting pairs of these vertices. We denote it as $\G[A]$ and we have the convention $G[\emptyset]=\emptyset$.
\end{enumerate}
\end{defn}

\noindent The next Proposition shows that every facet of $\sigma_G^{\vee}$ can be constructed by an independent set satisfying certain conditions. Note that there is a one-to-one correspondence between the facets of $\sigma_G^{\vee}$ and the extremal rays of $\sigma_G$. The face $\tau \preceq \sigma_G^{\vee}$ is a facet of $\sigma_G^{\vee}$ if and only if $\tau^{*}: = \tau^{\bot} \cap \sigma_G$ is an extremal ray of $\sigma_G$. We will interpret the next result in \hyperref[11thm]{Theorem \ref*{11thm}} and give an alternative one-to-one description between the extremal ray generators of $\sigma_G$ and certain independent sets. This description allows us to study the faces of $\sigma_G$.

\begin{prop}\label{villaprop} ({\cite{valenciavillareal}, \ Proposition \ 4.1, 4.6})
Let $A \subsetneq U_i$ be an independent set. Then $\mathcal{H}_A\cap \sigma_G^{\vee}$ is a proper face of $\sigma_G^{\vee}$. In particular, if $A\subsetneq U_1$, then $\mathcal{H}_A \cap \sigma_G^{\vee}$ is a facet of $\sigma_G^{\vee}$ if and only if $\G[A \sqcup N(A)]$ and $\G[(U_1 \backslash A) \sqcup (U_2\backslash N(A))]$ are connected and their union is a spanning subgraph of $G$. Furthermore, any facet of $\sigma_G^{\vee}$ has the form $\mathcal{H}_A \cap \sigma_G^{\vee}$ for some $A\subsetneq U_i$, $i=1$ or $i=2$.
\end{prop}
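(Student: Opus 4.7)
The plan is to analyse the linear functional $L_A \colon \QQ^{m+n} \to \QQ$ defined by
\[L_A(x) = \sum_{v \in A} x_v - \sum_{v \in N(A)} x_v,\]
which cuts out $\mathcal{H}_A$. First I would verify that for every generator $e^i + f^j$ with $(i,j) \in E(G)$, $i \in U_1$, $j \in U_2$, the value $L_A(e^i + f^j) = \mathbf{1}_A(i) - \mathbf{1}_{N(A)}(j)$ lies in $\{-1, 0\}$: if $i \in A$ then necessarily $j \in N(A)$, so the value is $0$; and if $i \notin A$ it is $0$ or $-1$. Hence $\mathcal{H}_A$ is a supporting hyperplane of $\sigma_G^{\vee}$ and $\mathcal{H}_A \cap \sigma_G^{\vee}$ is a face. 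Properness follows from the connectedness of $G$: since $\emptyset \neq A \subsetneq U_1$, some $j \in N(A)$ must be adjacent to a vertex $i' \in U_1 \setminus A$---otherwise $G$ would split into $A \sqcup N(A)$ and its complement---yielding a generator on which $L_A = -1$.

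For the facet criterion, I would identify the generators lying in the face as exactly the edges of the two induced subgraphs $G_1 := \G[A \sqcup N(A)]$ and $G_2 := \G[(U_1 \setminus A) \sqcup (U_2 \setminus N(A))]$, whose vertex sets are disjoint. By the rank formula for bipartite edge vectors (extending \hyperref[bubaslangic]{Proposition \ref*{bubaslangic}}), a connected bipartite subgraph on $s$ vertices of positive degree contributes a span of dimension $s-1$, while disconnectedness or isolated vertices strictly reduces this. Writing $s_1 = |A| + |N(A)|$, $s_2 = (m - |A|) + (n - |N(A)|)$, and $s_1 + s_2 = m+n$, the dimension of $\mathcal{H}_A \cap \sigma_G^{\vee}$ reaches $m+n-2 = \dim \sigma_G^{\vee} - 1$ exactly when both $G_1$ and $G_2$ are connected and $G_1 \cup G_2$ is a spanning subgraph of $G$.

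For the converse, let $F$ be a facet of $\sigma_G^{\vee}$ with primitive inward normal $\nu \in N$ and any integer lift $\tilde\nu \in \ZZ^{m+n}$. The edge subset $S := \{(i,j) \in E(G) : \tilde\nu_i + \tilde\nu_j = 0\}$ generates $F$, so its edge vectors span dimension $m+n-2$. By the rank formula above, the subgraph $(V(G),S)$ splits into at most two connected components (possibly with one isolated vertex) that jointly cover $V(G)$. Propagating $\tilde\nu_i + \tilde\nu_j = 0$ along edges of each component forces $\tilde\nu$ to be constant $a$ on its $U_1$-part and $-a$ on its $U_2$-part. Using the kernel direction $w = (1,\dots,1,-1,\dots,-1)$, I normalize so that $\tilde\nu$ vanishes on one component (the ``zero component''); primitivity then forces the other component to carry values $\pm 1$, and non-negativity $\tilde\nu \geq 0$ on all generators pins the sign to $(+1,-1)$ on $(U_1,U_2)$. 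Setting $A := V(\text{zero component}) \cap U_1$, the non-negativity on cross-edges rules out any neighbour of $A$ outside the zero component, giving $N_G(A) = V(\text{zero component}) \cap U_2$. Hence $A \subsetneq U_1$, the induced subgraphs $G_1$ and $G_2$ recover the component decomposition, and $F = \mathcal{H}_A \cap \sigma_G^{\vee}$. A symmetric normalization (non-zero component carrying $(-1,+1)$) produces an $A \subsetneq U_2$.

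The main technical obstacle is the converse: identifying the primitive facet normal with an $(A, N(A))$ pair. The dimension bookkeeping and the value-propagation along connected components are routine, but the normalization inside the quotient lattice $N = \ZZ^{m+n}/\overline{(1,-1)}$, the sign-constraints enforced by $\tilde\nu \geq 0$ on every generator, and the handling of degenerate configurations (for example when one component reduces to a single isolated vertex because $N(A) = U_2$) together require careful case analysis.
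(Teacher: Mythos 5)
The paper does not prove this proposition: it is quoted verbatim from Valencia and Villarreal (their Propositions 4.1 and 4.6), so there is no internal proof to compare against. Your blind attempt is therefore a proof from scratch, and the first two parts are handled correctly and cleanly: the computation $L_A(e^i+f^j) \in \{0,-1\}$, the properness-via-connectedness argument, and the dimension count (a spanning subgraph with components $C_1, C_2$ of sizes $s_1, s_2$ and component counts $c_1, c_2$ has edge span of dimension $(s_1-c_1)+(s_2-c_2)$, which equals $m+n-2$ iff $c_1 = c_2 = 1$) are all sound. Two small slips in wording: the support subgraph splits into \emph{exactly} two components, not ``at most two''; and the condition ``$G_1 \cup G_2$ is a spanning subgraph'' is automatic since $V(G_1) \sqcup V(G_2) = V(G)$ always holds, so the real content is just connectedness of both pieces.

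The converse has a genuine gap at the sign-normalization step. The assertion that ``non-negativity pins the sign to $(+1,-1)$ on $(U_1,U_2)$'' is false as stated: after normalizing $\tilde\nu = 0$ on one component, non-negativity on the bridging $G$-edges forces the other component's value to be $(+1,-1)$ \emph{or} $(-1,+1)$, depending on which direction those bridging edges run, and this is not freely choosable. In particular your recipe $A := V(\text{zero component}) \cap U_1$ can return $\emptyset$ or $U_1$. Concretely, take the facet with normal $\nu = [e_{m+1}] \in N$, the image of a $U_2$-basis vector. Zeroing the large component leaves the isolated vertex $\{m+1\} \subseteq U_2$ carrying the value $+1$ (not $-1$), and $A := V(\text{zero}) \cap U_1 = U_1$, which is not a proper subset; your ``symmetric normalization'' interchange then gives $A = \emptyset$ under the $U_1$-rule. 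The correct independent set here is $A = U_2 \setminus \{m+1\}$. The observation you are missing is this: writing $C_1, C_2$ for the two components, indexed so that all bridging edges run from $C_1 \cap U_1$ to $C_2 \cap U_2$, one has $L_{C_2 \cap U_1} = L_{C_1 \cap U_2}$ on every generator, so $A := C_2 \cap U_1 \subsetneq U_1$ and $A' := C_1 \cap U_2 \subsetneq U_2$ cut out the same face, and at least one of them is nonempty unless $G = K_{1,1}$. This dichotomy, rather than a single choice of $A := V(\text{zero}) \cap U_1$ plus a vague appeal to symmetry, is what makes the converse go through. You acknowledge the degenerate configurations as a technical obstacle, but the sketch as written does not actually resolve them.
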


\begin{ex}\label{kucuk}
Let $G \subsetneq K_{2,2}$ be the connected bipartite graph with disjoint sets $U_1 = \{1,2\}$ and $U_2 = \{3,4\}$ and the edge set $E(G) = E(K_{2,2}) \backslash \{1,3\}$. Recall that we have the edge cone $\sigma_G$ in $N_{\QQ} \cong \QQ^4 / (1,1,-1-1) \cong \QQ^3$ and the dual edge cone $\sigma_G^{\vee}$ in $M_{\QQ} \cong \QQ^4 \cap (1,1,-1,-1)^{\bot} \cong \QQ^3$. The three-dimensional cone $\sigma_G^{\vee} \subset M_{\RR}$ is generated by the extremal rays $[1,0,0,1]$, $[0,1,1,0]$ and $[0,1,0,1]$. The independent sets inducing the facets of $\sigma_G^{\vee}$ in \hyperref[villaprop]{Proposition \ref*{villaprop}} are colored in yellow. Here, the independent set $\{3\}$ is not considered, since we have $\mathcal{H}_{\{3\}}\cap \sigma_G^{\vee}= \mathcal{H}_{\{1\}}\cap \sigma_G^{\vee}$. The blue color represents the induced subgraph $\G[(U_1 \backslash A) \sqcup (U_2 \backslash N(A))]$ and the black color represents the induced subgraph $\G[A \sqcup N(A)]$. The graphs are labeled by their associated facets of $\sigma_G^{\vee}$. 
\begin{center}
%\fbox{
\colorbox{white}{
\begin{minipage}[b]{0.27\linewidth}
\begin{tikzpicture}[baseline=1cm,scale=1.5,every path/.style={>=latex},every node/.style={draw,circle,fill=white,scale=0.6}]
  \node            (a) at (0,0)  {2};
 %\node      [draw=none, fill=none, scale =1.5]       (a1) at (2.3, 0.75) {$G_*[A_i] \cong a_i$:};
  \node       [rectangle]     (b) at (1.5,0)  {4};
  \node            (c) at (0,1.5) {1};
  \node       [rectangle]     (d) at (1.5,1.5) {3};
\node [draw =none, fill=none,scale=1.6](e) at (0.75, -0.4) {$G$};

    \draw[-] (c) edge (b);
    \draw[-] (b) edge (a);
    \draw[-] (a) edge (d);
 
\end{tikzpicture}
\end{minipage}

\begin{minipage}[b]{0.24\linewidth}
\begin{tikzpicture}[baseline=1cm,scale=1.5,every path/.style={>=latex},every node/.style={draw,circle,fill=white,scale=0.6}]
  \node     [fill=yellow]          (a) at (0,0)  {2};
%  \node            (a1) [draw=none, fill =none, scale =1.5] at (0,0.5)  {$A_1$};
  \node        [rectangle]    (b) at (1.5,0)  {4};
  \node       [draw = cyan]     (c) at (0,1.5) {1};
  \node       [rectangle]     (d) at (1.5,1.5) {3};
\node [draw =none, fill=none,scale=1.6](e) at (0.75, -0.4) {$a_1$};

    \draw[-] (b) edge (a);
    \draw[-] (a) edge (d);

\end{tikzpicture}
\end{minipage}

\begin{minipage}[b]{0.24\linewidth}
\begin{tikzpicture}[baseline=1cm,scale=1.5,every path/.style={>=latex},every node/.style={draw,circle,fill=white,scale=0.6}]
  \node            (a) at (0,0)  {2};
  \node       [fill=yellow,rectangle]        (b) at (1.5,0)  {4};
%  \node            (a1) [draw=none, fill =none, scale =1.5] at (1.5,0.5)  {$A_2$};
  \node         (c) at (0,1.5) {1};
  \node      [rectangle, draw=cyan]      (d) at (1.5,1.5) {3};
\node [draw =none, fill=none,scale=1.6](e) at (0.75, -0.4) {$a_2$};

    \draw[-] (c) edge (b);
    \draw[-] (b) edge (a);
 
\end{tikzpicture}
\end{minipage}

\begin{minipage}[b]{2.4\linewidth}
\begin{tikzpicture}[baseline=1cm,scale=1.5,every path/.style={>=latex},every node/.style={draw,circle,fill=white,scale=0.6}]
  \node         [draw=cyan]   (a) at (0,0)  {2};
  \node           [rectangle] (b) at (1.5,0)  {4};
 % \node            (a1) [draw=none, fill =none, scale =1.5] at (0.75,1.5)  {$A_3$};
  \node       [fill=yellow]        (c) at (0,1.5) {1};
  \node       [draw=cyan,rectangle]        (d) at (1.5,1.5) {3};
\node [draw =none, fill=none,scale=1.6](e) at (0.75, -0.4) {$a_3$ };

    \draw[-] (c) edge (b);
    \draw[-,cyan] (a) edge (d);

\end{tikzpicture}
\end{minipage}

 }
%}

\end{center}
~
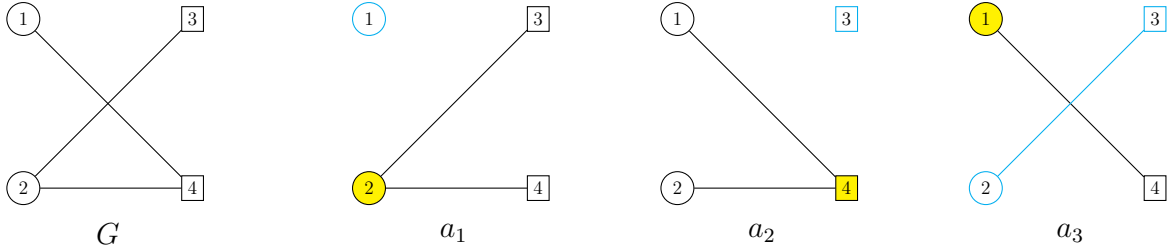
\captionof{figure}{The representation of the facets $a_1$, $a_2$, $a_3$ of the dual edge cone $\sigma_G^{\vee}$ of the connected bipartite graph $G$.} \label{ilkornek}
\vspace{0.5cm}
 Let us calculate the facet $a_1$ of $\sigma_G^{\vee}$ given by the independent set $A_1= \{2\}$. Equivalently, we calculate the extremal ray generator $a^{*}_1$ of $\sigma_G$. The supporting hyperplane associated to $A_1$ is $\mathcal{H}_{A_1} = \{ (x_1,x_2,x_3,x_4) \in M_{\RR} \ | \ x_2 = x_3 + x_4\}$. Therefore the facet $a_1$ is spanned by $[0,1,1,0]$ and $[0,1,0,1]$. In the same way, one obtains that $a_2$ is generated by $[1,0,0,1]$ and $[0,1,0,1]$ and $a_3$ is generated by $[1,0,0,1]$ and $[0,1,1,0]$. Moreover we obtain $a^*_1 = e_1$, $a^*_2 = e_3$, and $a^*_3 = e_2-e_3$.
\end{ex}

\begin{rem}\label{independentsarenoteverything}
The disjoint sets $U_1$ and $U_2$ of a bipartite graph $G \subseteq K_{m,n}$ are also independent sets and they induce the dual edge cone itself. For instance, in  \hyperref[kucuk]{Example \ref*{kucuk}}, if we consider the independent set $U_1= \{1,2\}$, then we obtain $\G[\{1,2\} \sqcup N(\{1,2\})] = G$, i.e.\ $\mathcal{H}_{\{1,2\}} \cap \sigma_G^{\vee}= \sigma_G^{\vee}$. On the other hand, not all faces of the edge cone can be induced by the independent sets $A\neq U_i$ as in \hyperref[villaprop]{Proposition \ref*{villaprop}}.  Let us consider the one-dimensional face $ a_1 \cap  a_2  = \langle [0,1,0,1] \rangle \prec \sigma_G^{\vee}$. It is represented by the edge $\{ 2,4\} \in E(G)$, but there exists no independent set $A$ such that $\langle [0,1,0,1] \rangle =\mathcal{H}_A \cap \sigma_G^{\vee}$. It is because one obtains that $\mathcal{H}_{U_2} \cap \sigma_G^{\vee}= \sigma_G^{\vee}$ and $\mathcal{H}_{\{1\}} \cap \sigma_G^{\vee}= \mathcal{H}_{\{3\}} \cap \sigma_G^{\vee} = a_3$.
\end{rem}
\noindent In order to state our alternative description in \hyperref[11thm]{Theorem \ref*{11thm}}, we present the central definitions that we need.
\begin{defn}
An independent set $A\subsetneq V(G)$ is called a \textit{maximal independent set} if there is no other independent set containing it. We say that an independent set is \textit{one-sided} if it is contained either in $U_1$ or in $U_2$. In a similar way, $A=A_1 \sqcup A_2$ is called a \textit{two-sided independent set} if $\emptyset \neq A_1 \subsetneq U_1$ and $ \emptyset \neq A_2 \subsetneq U_2$. 
\end{defn}

\noindent Let us consider the following independent sets:
\begin{eqnarray*}
\mathcal{I}_G^{(*)}:=\{ \text{Two-sided maximal independent sets}\} \sqcup \{ \text{One-sided independent sets } U_i \backslash \{\bullet\} \text{ not} \\ 
\text{      }  \text{        contained} \text{ in any two-sided independent set}\}
\end{eqnarray*}         
\noindent Here $\{\bullet\}$ stands for a single vertex in $U_i$.

\begin{defn}
Let $G[[A]]$ be the subgraph of $G$ associated to the independent set $A$ defined as 
$$ G[[A]]:=
  \left\{
                \begin{array}{ll}
                G[A \sqcup N(A)] \sqcup G[(U_1\backslash A) \sqcup (U_2\backslash N(A))], \text{if } A \subseteq U_1 \text{ is one-sided.}\\
                G[A \sqcup N(A)] \sqcup G[(U_2\backslash A) \sqcup (U_1\backslash N(A))], \text{if }A\subseteq U_2\text{ is one-sided.} \\
               G[A_1 \sqcup N(A_1)] \sqcup G[A_2 \sqcup N(A_2)], \text{if }A=A_1\sqcup A_2 \text{ is two-sided.} 
    
                \end{array}
              \right.  $$

\noindent We define the \emph{associated bipartite subgraph} $\G \{A\}  \subseteq G$ to the independent set $A$ as the spanning subgraph $G[[A]] \sqcup \big(V(G) \backslash V(G[[A]])\big)$.
\end{defn}

\noindent Now, we come to our characterization of the independent sets, defining a facet of $\sigma_G^{\vee}$. 
\begin{defn}\label{firstindependentsets}

We say that $A \in \mathcal{I}_G^{(*)}$ is a first independent set if the associated subgraph $\G\{A\}$ has two connected components. We denote the set of first independent sets by  $\mathcal{I}_G^{(1)}$.           
\end{defn}

\begin{ex}
Let $G\subsetneq K_{2,2}$ be the connected bipartite graph from \hyperref[kucuk]{Example \ref*{kucuk}}. We observe that $\{1\} \sqcup \{3\}$ is a two-sided maximal independent set and the associated subgraph $\G\{\{1\} \sqcup \{3\}\}$ is the fourth bipartite graph in \hyperref[ilkornek]{Figure \ref*{ilkornek}}. Likewise, the second and third graphs are the associated subgraphs $\G\{\{2\}\}$ and $\G\{\{4\}\}$ to the one-sided independent sets $\{2\} \subset U_1$ and $\{4\} \subset U_2$ which are not contained in any two-sided independent set. Note that all these associated subgraphs have two connected components and thus these independent sets are first independent sets. In particular, we have that $G =\G\{\{1,2\}\}=\G\{\{3,4\}\}$.
\end{ex}

\begin{thm}\label{11thm}
There is a one-to-one correspondence between the set of extremal generators of the cone $\sigma_G$ and the first independent set $\mathcal{I}_G^{(1)}$ of $G$. In particular, the map is given as
\begin{eqnarray*}
 \pi \colon \mathcal{I}_G^{(1)} &\longrightarrow &\sigma_G^{(1)} \\
A &\mapsto&  \mathfrak{a} := (\mathcal{H}_{A_i} \cap \sigma_G^{\vee})^{*}
\end{eqnarray*}
for a fixed $i \in \{1,2\}$ with $A_i \neq \emptyset$.
\end{thm}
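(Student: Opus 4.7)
The plan is to combine the partial surjection of \hyperref[halfonetoone]{Theorem \ref*{halfonetoone}} with the facet criterion of \hyperref[villaprop]{Proposition \ref*{villaprop}}, and invoke the standard cone/dual correspondence between facets of $\sigma_G^{\vee}$ and extremal rays of $\sigma_G$.

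To check that $\pi$ is well-defined and lands in $\sigma_G^{(1)}$, I fix $A \in \mathcal{I}_G^{(1)}$. If $A = A_1 \sqcup A_2$ is two-sided maximal, then \hyperref[prop3]{Proposition \ref*{prop3}} identifies the connected components of $\G\{A\}$ with $\G[A_1 \sqcup N(A_1)]$ and $\G[(U_1 \setminus A_1) \sqcup (U_2 \setminus N(A_1))]$; the first-independent-set assumption says both components are connected, so \hyperref[villaprop]{Proposition \ref*{villaprop}} yields that $\mathcal{H}_{A_1} \cap \sigma_G^{\vee}$ is a facet, and independence from the choice of side follows from \hyperref[egal]{Remark \ref*{egal}}. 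In the remaining case $A = U_i \setminus \{u\}$, \hyperref[lemonesid]{Lemma \ref*{lemonesid}} rewrites $\mathcal{H}_A \cap \sigma_G^{\vee}$ as $H_{e_u} \cap \sigma_G^{\vee}$, which is again a facet since the two-component condition on $\G\{A\}$ translates back into the hypotheses of \hyperref[villaprop]{Proposition \ref*{villaprop}}.

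Surjectivity is then a direct consequence of \hyperref[halfonetoone]{Theorem \ref*{halfonetoone}}: given an extremal ray $\mathfrak{a} \in \sigma_G^{(1)}$, its dual face $\mathfrak{a}^{\bot} \cap \sigma_G^{\vee}$ is a facet of $\sigma_G^{\vee}$; by \hyperref[villaprop]{Proposition \ref*{villaprop}} it equals $\mathcal{H}_{A_1} \cap \sigma_G^{\vee}$ for some one-sided independent set $A_1 \subsetneq U_i$, and by \hyperref[halfonetoone]{Theorem \ref*{halfonetoone}} this $A_1$ extends to an element $A \in \mathcal{I}_G^{(*)}$. The connectivity conclusion of \hyperref[villaprop]{Proposition \ref*{villaprop}} combined with \hyperref[spanningtwo]{Remark \ref*{spanningtwo}} forces $\G\{A\}$ to have exactly two connected components, hence $A \in \mathcal{I}_G^{(1)}$ and $\pi(A) = \mathfrak{a}$.

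The main obstacle is injectivity, since a priori two distinct elements of $\mathcal{I}_G^{(1)}$ could cut out the same supporting hyperplane. The defining linear form of $\mathcal{H}_{A_1}$ is $\langle \chi_{A_1} - \chi_{N(A_1)}, \cdot \rangle = 0$, where $\chi_S$ denotes the characteristic vector of a subset $S$, and pairing against the generators $e^i + f^j$ of $\sigma_G^{\vee}$ shows this pairing is nonpositive; hence the associated extremal ray of $\sigma_G$ is represented by $-\chi_{A_1} + \chi_{N(A_1)}$ in $N_{\QQ} = \QQ^{m+n}/\overline{(1,-1)}$. If $A, A' \in \mathcal{I}_G^{(1)}$ yield the same ray, the equality $-\chi_{A_1} + \chi_{N(A_1)} \equiv -\chi_{A'_1} + \chi_{N(A'_1)}$ modulo $\chi_{U_1} - \chi_{U_2}$, combined with a coordinate inspection on $U_1$ (whose entries lie in $\{-1, 0\}$ and must contain some vertex outside $A_1$ because $A_1 \subsetneq U_1$), pins the shift parameter to zero, giving $A_1 = A'_1$ and then $A_2 = U_2 \setminus N(A_1) = A'_2$ by maximality. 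The same argument, read through \hyperref[lemonesid]{Lemma \ref*{lemonesid}}, excludes both mixed pairs (one one-sided, one two-sided) and two distinct one-sided pairs from producing the same ray. Together with the canonical bijection between facets of $\sigma_G^{\vee}$ and extremal rays of $\sigma_G$, this finishes the proof.
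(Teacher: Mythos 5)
Your proof is correct, and the overall skeleton (well-definedness via Proposition~\ref{villaprop} and Remark~\ref{egal}, surjectivity via Theorem~\ref{halfonetoone}) matches the paper. The injectivity argument, however, goes by a genuinely different route. The paper works on the dual side: given two first independent sets, it exhibits an explicit generator $e^i+f^j$ of $\sigma_G^{\vee}$ that lies on one of the candidate supporting hyperplanes but not the other, with a case analysis (one-sided vs.\ two-sided, $B_1 \ni i$ vs.\ $B_1 \subsetneq A_1$) and a final connectivity contradiction when both sets are two-sided. You instead work on the primal side: you compute the explicit representative $\chi_{N(A_1)} - \chi_{A_1}$ of the extremal ray in $N_\QQ = \QQ^{m+n}/\overline{(1,-1)}$, observe that its $U_1$-coordinates lie in $\{-1,0\}$ with both values attained (because $\emptyset \neq A_1 \subsetneq U_1$), and deduce that the only way two such vectors can agree up to positive scaling and a $\chi_{U_1}-\chi_{U_2}$ shift is if the scale is $1$ and the shift is $0$. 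This buys you a uniform argument that handles all the cases the paper splits into, and it identifies the ray generator explicitly, which the paper leaves implicit (it is visible only through $\Val(A)$ in Theorem~\ref{thmgibi}). The one place you gloss over is the sentence ``The same argument, read through Lemma~\ref{lemonesid}, excludes both mixed pairs\dots''; spelling out that for a one-sided $A' \subseteq U_2$ one has $N(A')=U_1$, making the $U_1$-coordinates of the second vector constant while those of the first are not, would make the claim airtight.
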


\noindent Note that we preserve the curly notation $\mathfrak{a}:=\pi(A)$ for an extremal ray $\mathfrak{a}\preceq\sigma_G$ associated to $A \in \mathcal{I}_G^{(1)}$ for the rest of this paper. We also identify an extremal ray with its primitive ray generator.

\subsection{Proof of \hyperref[11thm]{Theorem \ref*{11thm}}.}
This section is devoted to the proof of the one-to-one correspondence between the set of extremal ray generators of $\sigma_G$ and the first independent sets of $G$. For this end, we prove certain combinatorial properties of independent sets and consequently we describe the supporting hyperplane for the extremal ray $\mathfrak{a}$ by the degree sequence of the graph $G\{A\}$. 
\begin{prop}\label{prop3}
Let $A=A_1 \sqcup A_2 \in \mathcal{I}_G^{(*)}$ be a two-sided maximal independent set. Then, one has $N(A_2) = U_1 \backslash A_1$ and $A_2 = U_2 \backslash N(A_1)$. 
\end{prop}

\begin{proof}
Let $x \in N(A_2)$. By definition there exists a vertex $y \in A_2$ such that $\{x, y\} \in E(G)$. Since $A$ is an independent set, $x$ can not be in $A_1$. Conversely, let $x \in U_1 \backslash A_1$. Since $G$ is connected, there exists a vertex $y \in U_2$ such that $\{x, y\} \in E(G)$. Suppose that $x \notin N(A_2)$. This means that for any $a_2 \in A_2$, $\{x, a_2\} \notin E(G)$. This implies that $x \in A_1$ by maximality of the independent set A. The other equality follows similarly. 
\end{proof}

\begin{lem}\label{lemonesid}
Let $A \in \mathcal{I}_G^{(*)}$ be a one-sided independent set not contained in any two-sided independent set. Then $N(A)$ is equal to one of the disjoint sets of $G$. If $\mathcal{H}_A \cap \sigma_G^{\vee}$ is a facet of $\sigma_G^{\vee}$, then $A=U_i \backslash \{u_i\}$ for some $u_i \in U_i$. Moreover, one obtains the following equality $\mathcal{H}_A \cap \sigma_G^{\vee} = H_{e_i} \cap \sigma_G^{\vee}$ where $H_{e_i}$ denotes the supporting hyperplane of $\sigma^{\vee}_G$ associated to $e_i$. 
\end{lem}

\begin{proof}
Let $A \subsetneq U_1$ be a one-sided independent set. Suppose that $N(A) \neq U_2$, then $A \sqcup (U_2 \backslash N(A))$ is a two-sided independent set containing $A$. Hence, if $A$ is a one-sided independent set not contained in any two-sided independent set, then $N(A)=U_2$. Suppose now that $\mathcal{H}_A \cap \sigma_G^{\vee}$ is a facet of $\sigma_G^{\vee}$, then the induced subgraph $\G[(U_1 \backslash A) \sqcup (U_2 \backslash N(A))]$ consists of isolated vertices. Thus this induced subgraph is connected if and only if $|A| = m-1$. The supporting hyperplane $\mathcal{H}_A$ associated to $A$ is
\[\{x \in M_{\mathbb{Q}} \ | \ x_1 +...+\widehat{x_i} +... x_m = x_{m+1} + ... + x_{m+n} \}.\]
where $\widehat{x_i}$ indicates that $x_i$ is omitted.
Since the chosen lattice $N = \ZZ^{m+n}/ \overline{(1,-1)}$, we obtain the equality $\mathcal{H}_A \cap \sigma_G^{\vee} = H_{e_i} \cap \sigma_G^{\vee}$. In particular $H_{e_i}$ is a supporting hyperplane, since $H_{e_i}^{+} \supset \sigma_G^{\vee}$ or equivalently $e_i \in \sigma_G$.
\end{proof}

\begin{rem}\label{spanningtwo}
Let $A \in \mathcal{I}_G^{(*)}$ be a two-sided maximal independent set. By the equalities from \hyperref[prop3]{Proposition \ref*{prop3}}, we observe that \[\G[A_1 \sqcup N(A_1)] = \G[(U_1 \backslash N(A_2)) \sqcup (U_2 \backslash A_2)] \] \[ \G[(U_1 \backslash A_1) \sqcup (U_2 \backslash N(A_1))] = \G[A_2 \sqcup N(A_2)]\] hold. In particular, the union of induced subgraphs $\G[A_1 \sqcup N(A_1)]$ and $\G[(U_1 \backslash A_1) \cup (U_2 \backslash N(A_1))]$ is a spanning subgraph. If $A_i=U_i \backslash \{u_i\} \in \mathcal{I}_G^{(*)}$, then the union of the induced subgraphs $\G[A_i \sqcup N(A_i)]$ and $\G[(U_i \backslash A_i) \cup (U_j \backslash N(A_j))] = u_i$ is a spanning subgraph of $G$.
\end{rem}

\begin{lem}\label{lemmaxindep}
If $A=A_1 \sqcup A_2$ is a two-sided independent set and if $\mathcal{H}_{A_1} \cap \sigma_G^{\vee}$ is a facet of $\sigma_G^{\vee}$, then there exists a maximal two-sided independent set $A' = A_1 \sqcup A'_2 \in \mathcal{I}_G^{(*)}$ for some vertex set $A'_2 \supseteq A_2$. 
\end{lem}
\begin{proof}
Assume that the two-sided maximal independent set $A' = A_1 \sqcup A'_2$ is not maximal, i.e.\ there exists a vertex set $A'_1 \supsetneq A_1$ such that $A'' =A'_1 \sqcup A'_2$ is a maximal two-sided independent set. Let $v \in A'_1 \backslash A_1$ be a vertex.  By \hyperref [villaprop] {Proposition \ref*{villaprop}}, since $\mathcal{H}_{A_1} \cap \sigma^{\vee}_G$ is a facet of $\sigma_G^{\vee}$, the induced subgraph $\G[(U_1 \backslash A_1) \sqcup (U_2 \backslash N(A_1))] = \G[(U_1 \backslash A_1) \sqcup A'_2]$ must be connected. However $v$ is an isolated vertex in  $\G[(U_1 \backslash A_1) \sqcup (U_2 \backslash N(A_1))]$ which is a contradiction.
\end{proof}

\begin{rem}\label{egal}
We observe that there is a symmetry for the supporting hyperplanes for a two-sided maximal independent set $A=A_1 \sqcup A_2 \in \mathcal{I}_G^{(*)}$. Recall that the supporting hyperplane associated to a one-sided independent set $A_i\subseteq U_i$ is defined as $$\mathcal{H}_{A_i} = \{x \in M_{\mathbb{Q}} \ | \ \sum_{v_i \in A_i} x_i = \sum_{v_i \in N(A_i)} x_i\}.$$
\noindent Assume that $x \in \mathcal{H}_{A_1} \cap \sigma_G^{\vee}$. By the definition  of $\mathcal{H}_{A_i}$ and since $M_{\QQ} \cong \QQ^{m+n} \cap \overline{(1,-1)}^{\bot}$, it follows that $ \sum_{v_i \in N(A_2)} x_i = \sum_{v_i \in A_2} x_i$ and hence $x\in \mathcal{H}_{A_2} \cap\sigma^{\vee}_G $. Therefore it is enough to consider only one component $A_i$ of the maximal two-sided independent set $A=A_1 \sqcup A_2$ for the associated supporting hyperplane.
\end{rem}

\begin{ex}\label{tekindependentli}
Let $G \subsetneq K_{4,4}$ be the connected bipartite graph with the edge set $E(G) = E(K_{4,4}) \backslash \{\{1,5\},\{2,5\},\{3,5\}\}$. We consider the one-sided independent set $A = \{1,2,3\}$. Since $N(A) = \{ 6,7,8\} \subsetneq U_2$, it is contained in a two-sided independent set which is $\{1,2,3,5\}$. We observe in the figure below that this two-sided independent set forms a facet $\tau$ of $\sigma_G^{\vee}$ and it is maximal. Therefore, one obtains that $\tau = \mathcal{H}_{\{1,2,3\}} \cap \sigma_G^{\vee} = \mathcal{H}_{\{5\}} \cap \sigma_G^{\vee}$.\\

\hspace{1.5cm}
\begin{tikzpicture}[baseline=1cm,scale=1.5,every path/.style={>=latex},every node/.style={draw,circle,fill=white,scale=0.6}]
  \node            (a2) at (0,1.5)  {2};
  \node            (a3) at (0,1)  {3};
  \node            (a4) at (0,0.5)  {4};
 %\node      [draw=none, fill=none, scale =1.5]       (a1) at (2.3, 0.75) {$G_*[A_i] \cong a_i$:};
\node       [rectangle]     (b8) at (1.5,0.5)  {8};
\node       [rectangle]     (b7) at (1.5,1)  {7}; 
 \node       [rectangle]     (b6) at (1.5,1.5)  {6};
  \node            (a1) at (0,2) {1};
  \node       [rectangle]     (b5) at (1.5,2) {5};
%  \node [draw=none, fill=none,scale=1.3] (e) at (0.75, -0.2) {$G$};

    \draw[-] (a1) edge (b6);
    \draw[-] (a1) edge (b7);
    \draw[-] (a1) edge (b8);

    \draw[-] (a2) edge (b6);
    \draw[-] (a2) edge (b7);
    \draw[-] (a2) edge (b8);

    \draw[-] (a3) edge (b6);
    \draw[-] (a3) edge (b7);
    \draw[-] (a3) edge (b8);

    \draw[-] (a4) edge (b6);
    \draw[-] (a4) edge (b7);
    \draw[-] (a4) edge (b8);
    \draw[-] (a4) edge (b5);
 
\end{tikzpicture}
\hspace{2cm}
\begin{tikzpicture}[baseline=1cm,scale=1.5,every path/.style={>=latex},every node/.style={draw,circle,fill=white,scale=0.6}]
  \node         [fill=yellow]   (a2) at (0,1.5)  {2};
  \node        [fill=yellow]    (a3) at (0,1)  {3};
  \node            [draw= black] (a4) at (0,0.5)  {4};
 %\node      [draw=none, fill=none, scale =1.5]       (a1) at (2.3, 0.75) {$G_*[A_i] \cong a_i$:};
\node       [rectangle]     (b8) at (1.5,0.5)  {8};
\node       [rectangle]     (b7) at (1.5,1)  {7}; 
 \node       [rectangle]     (b6) at (1.5,1.5)  {6};
  \node    [fill=yellow]        (a1) at (0,2) {1};
  \node        [rectangle]     (b5) at (1.5,2) {5};
%\node [draw=none, fill=none,scale=1.3] (e) at (0.75, -0.2) {$\G[A\cup N(A)]$};

    \draw[-] (a1) edge (b6);
    \draw[-] (a1) edge (b7);
    \draw[-] (a1) edge (b8);

    \draw[-] (a2) edge (b6);
    \draw[-] (a2) edge (b7);
    \draw[-] (a2) edge (b8);

    \draw[-] (a3) edge (b6);
    \draw[-] (a3) edge (b7);
    \draw[-] (a3) edge (b8);

\end{tikzpicture}
\hspace{2cm}
\begin{tikzpicture}[baseline=1cm,scale=1.5,every path/.style={>=latex},every node/.style={draw,circle,fill=white,scale=0.6}]
  \node       [fill=yellow]     (a2) at (0,1.5)  {2};
  \node       [fill=yellow]     (a3) at (0,1)  {3};
  \node       [draw=cyan] (a4) at (0,0.5)  {4};
 %\node      [draw=none, fill=none, scale =1.5]       (a1) at (2.3, 0.75) {$G_*[A_i] \cong a_i$:};
\node       [rectangle]     (b8) at (1.5,0.5)  {8};
\node       [rectangle]     (b7) at (1.5,1)  {7}; 
 \node       [rectangle]     (b6) at (1.5,1.5)  {6};
  \node      [fill=yellow]      (a1) at (0,2) {1};
  \node       [fill=yellow,draw=cyan,rectangle]     (b5) at (1.5,2) {5};
\node [draw=none, fill=none,scale=1.3] (e) at (0.75, -0.2) { };
   \draw[-] (a1) edge (b6);
    \draw[-] (a1) edge (b7);
    \draw[-] (a1) edge (b8);
    \draw[-] (a2) edge (b6);
    \draw[-] (a2) edge (b7);
    \draw[-] (a2) edge (b8);
    \draw[-] (a3) edge (b6);
    \draw[-] (a3) edge (b7);
    \draw[-] (a3) edge (b8);
    \draw[-,thick,cyan] (a4) edge (b5);
\end{tikzpicture}

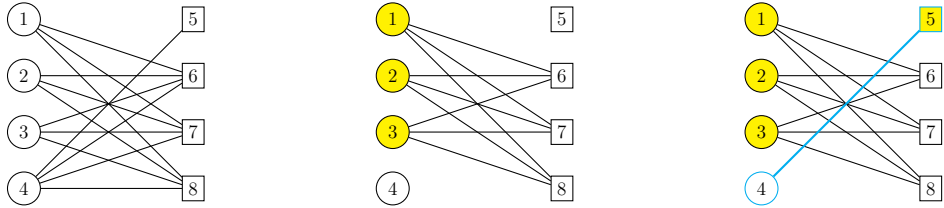
\captionof{figure}{The associated subgraphs $\G\{\{1,2,3\}\}$ and $\G\{\{1,2,3,5\}\}$}\label{benimteo}
\vspace{0.5cm}
Moreover, the independent sets of form $U_i \backslash \{\bullet\} \in \mathcal{I}_G^{(*)}$ other than $A$ give the remaining facets of $\sigma_G^{\vee}$. 
\end{ex}

\noindent  By \hyperref[lemonesid]{Lemma \ref*{lemonesid}} and \hyperref[lemmaxindep]{Lemma \ref*{lemmaxindep}}, we obtain the following theorem.

\begin{thm}\label{halfonetoone}
If $\mathcal{H}_{A_1} \cap \sigma_G^{\vee}$ is a facet of $\sigma_G^{\vee}$, then there exists an independent set $A=A_1 \sqcup A_2 \in \mathcal{I}_G^{(*)}$.
\end{thm}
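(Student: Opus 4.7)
The plan is to follow the natural dichotomy suggested by the three preparatory results, Proposition \ref{villaprop}, Lemma \ref{lemonesid}, and Lemma \ref{lemmaxindep}. First, I would invoke Proposition \ref{villaprop}, which tells us that every facet of $\sigma_G^{\vee}$ is of the form $\mathcal{H}_A \cap \sigma_G^{\vee}$ for some one-sided independent set $A \subsetneq U_i$. Without loss of generality I would take $A_1 \subsetneq U_1$. The goal is then to exhibit $A_2 \subseteq U_2$ (allowed to be empty precisely when the target element of $\mathcal{I}_G^{(*)}$ is a one-sided complement) so that $A_1 \sqcup A_2 \in \mathcal{I}_G^{(*)}$.

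The case split is on whether $U_2 \setminus N(A_1)$ is empty, i.e.\ whether $A_1$ can be enlarged to a two-sided independent set at all. If $N(A_1) = U_2$, then no two-sided independent set contains $A_1$, so Lemma \ref{lemonesid} applies and its second assertion forces $A_1 = U_1 \setminus \{u_1\}$ for some $u_1 \in U_1$. This places $A_1$ directly in the one-sided branch of $\mathcal{I}_G^{(*)}$ with $A_2 = \emptyset$. Otherwise, $A_2^{\circ} := U_2 \setminus N(A_1)$ is a nonempty subset of $U_2$ disjoint from $N(A_1)$, so $A_1 \sqcup A_2^{\circ}$ is already a two-sided independent set. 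Feeding this pair into Lemma \ref{lemmaxindep} produces a maximal two-sided independent set $A' = A_1 \sqcup A_2'$ with $A_2' \supseteq A_2^{\circ}$, and this $A'$ lies in the two-sided branch of $\mathcal{I}_G^{(*)}$.

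The only step that is not a direct invocation is making sure that Lemma \ref{lemmaxindep} genuinely preserves the first coordinate as the given $A_1$, rather than silently enlarging it when passing to a maximal two-sided set. That is exactly what the lemma guarantees, via the contradiction argument in its proof: any strict enlargement $A_1' \supsetneq A_1$ would leave a vertex of $A_1' \setminus A_1$ isolated inside $\G[(U_1 \setminus A_1) \sqcup (U_2 \setminus N(A_1))]$, contradicting the connectivity half of the facet characterization from Proposition \ref{villaprop}. This hinge point is what I expect to require the most care in writing up; once it is in place, the two cases above together cover every independent set that produces the given facet, and the produced $A$ has exactly the form demanded by $\mathcal{I}_G^{(*)}$, so no further calculation is needed.
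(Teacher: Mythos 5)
Your proof is correct and follows the paper's intended route: the paper leaves Theorem \ref{halfonetoone} without an explicit proof, presenting it as a summary of Lemma \ref{lemonesid} and Lemma \ref{lemmaxindep}, and your case split on whether $U_2 \setminus N(A_1)$ is empty, together with the observation that Lemma \ref{lemmaxindep} fixes the first component $A_1$ when passing to a maximal two-sided independent set, is precisely the argument those two lemmas are designed to supply.
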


\noindent For $A=A_1 \sqcup A_2 \in \mathcal{I}_G^{(*)}$, the induced subgraphs $\G[A_1 \sqcup N(A_1)]$ and $\G[A_2 \sqcup N(A_2)]$ might be not connected. In the next example, we observe that $\mathcal{I}_G^{(*)}$ is a necessary but not a sufficient condition to form a facet. This remark and \hyperref[prop239]{Proposition \ref*{prop239}} will be useful for us once we start describing the lower dimensional faces of $\sigma_G^{\vee}$ in \hyperref[facesection]{Section \ref*{facesection}}.

\begin{ex}\label{notconnected}
Let $G \subsetneq K_{4,4}$ be as in the figure below. Consider the two-sided maximal independent set $A=A_1 \sqcup A_2 = \{1,2\}\sqcup \{5,6\}$. We obtain that $N(A_1) = \{7,8\}$ and $N(A_2)=\{3,4\}$. One can observe that although $A=\{1,2,5,6\}$ is a maximal two-sided independent set, the induced subgraph $G[A_1 \sqcup N(A_1)]$ is not a connected graph. \\
\begin{center}
\begin{tikzpicture}[baseline=1cm,scale=1.5,every path/.style={>=latex},every node/.style={draw,circle,fill=white,scale=0.6}]
  \node            (a2) at (0,1.5)  {2};
  \node            (a3) at (0,1)  {3};
  \node            (a4) at (0,0.5)  {4};
 %\node      [draw=none, fill=none, scale =1.5]       (a1) at (2.3, 0.75) {$G_*[A_i] \cong a_i$:};
\node       [rectangle]     (b8) at (1.5,0.5)  {8};
\node       [rectangle]     (b7) at (1.5,1)  {7}; 
 \node       [rectangle]     (b6) at (1.5,1.5)  {6};
  \node            (a1) at (0,2) {1};
  \node       [rectangle]     (b5) at (1.5,2) {5};

    \draw[-] (a1) edge (b8);

    \draw[-] (a2) edge (b7);

    \draw[-] (a3) edge (b5);
    \draw[-] (a3) edge (b6);
    \draw[-] (a3) edge (b7);
    \draw[-] (a3) edge (b8);

    \draw[-] (a4) edge (b6);
    \draw[-] (a4) edge (b7);
    \draw[-] (a4) edge (b8);
    \draw[-] (a4) edge (b5);
 
\end{tikzpicture}
\hspace{1cm}
\begin{tikzpicture}[baseline=1cm,scale=1.5,every path/.style={>=latex},every node/.style={draw,circle,fill=white,scale=0.6}]
  \node         [fill=yellow]   (a2) at (0,1.5)  {2};
  \node       [draw=cyan]    (a3) at (0,1)  {3};
  \node          [draw=cyan]   (a4) at (0,0.5)  {4};
 %\node      [draw=none, fill=none, scale =1.5]       (a1) at (2.3, 0.75) {$G_*[A_i] \cong a_i$:};
\node       [rectangle]     (b8) at (1.5,0.5)  {8};
\node       [rectangle]     (b7) at (1.5,1)  {7}; 
 \node       [rectangle, fill=yellow,draw=cyan]     (b6) at (1.5,1.5)  {6};
  \node    [fill=yellow]        (a1) at (0,2) {1};
  \node        [fill=yellow,draw=cyan] [rectangle]     (b5) at (1.5,2) {5};

    \draw[-] (a1) edge (b8);

    \draw[-] (a2) edge (b7);

    \draw[-,cyan] (a3) edge (b6);
    \draw[-,cyan] (a3) edge (b5);
    \draw[-,cyan] (a4) edge (b5);
   \draw[-,cyan] (a4) edge (b6);

\end{tikzpicture}
\end{center}

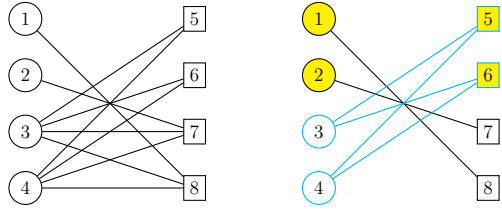
\captionof{figure}{The associated subgraph $\G\{\{1,2,5,6\}\}$}
\end{ex}
\noindent In the next proposition, we examine the case where $\G\{A\}$ has more than two connected components. We show that these independent sets in fact give rise to certain facets of $\sigma_G^{\vee}$.
\begin{prop}\label{prop239} Let $A = A_1 \sqcup A_2 \in \mathcal{I}_G^{(*)}$ be an independent set. Suppose that the induced subgraph $G[A_1 \sqcup N(A_1)]$ consists of $d$ connected bipartite graphs $G_i$ with vertex sets $X_i \subseteq A_1$ and $N(X_i) \subseteq N(A_1)$ and the induced subgraph $G[(U_1 \backslash A_1) \sqcup (U_2 \backslash N(A_1))]$ is connected. Then, for each $i \in [d]$ there exist two-sided maximal independent sets $X_i \sqcup (A_2 \sqcup \bigsqcup_{j \neq i} N(X_j))$ forming facets of $\sigma_G^{\vee}$. \end{prop}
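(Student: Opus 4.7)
The plan is to apply Proposition \ref{villaprop} to the one-sided set $A'_1 := X_i \subsetneq U_1$, so that the claimed set $A' = X_i \sqcup \bigl(A_2 \sqcup \bigsqcup_{j\neq i} N(X_j)\bigr)$ is recovered as $A'_1 \sqcup (U_2\setminus N(A'_1))$ and $\mathcal{H}_{A'_1} \cap \sigma_G^{\vee} = \mathcal{H}_{A'} \cap \sigma_G^{\vee}$ by Remark \ref{egal}. First I would compute $N(A'_1) = N(X_i)$, which holds because $G_i$ is a connected component of $\G[A_1 \sqcup N(A_1)]$, so no vertex of $X_i$ has a neighbor in $N(X_j)$ for $j\neq i$, and no vertex of $A_1$ has a neighbor in $A_2$ by independence of $A$. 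Combined with Proposition \ref{prop3} applied to the maximal two-sided set $A$, this yields
\[
U_2 \setminus N(X_i) \;=\; (U_2 \setminus N(A_1)) \sqcup \bigsqcup_{j \neq i} N(X_j) \;=\; A_2 \sqcup \bigsqcup_{j \neq i} N(X_j),
\]
which is exactly $A'_2$. A symmetric check gives $N(A'_2) = U_1 \setminus X_i$, so $A'$ is a two-sided maximal independent set (it also lies in $\mathcal{I}_G^{(*)}$).

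It remains to verify the facet criterion of Proposition \ref{villaprop} for $A'_1 = X_i$. The induced subgraph $\G[X_i \sqcup N(X_i)]$ equals $G_i$, which is connected by hypothesis. The spanning condition is automatic since the two vertex sets partition $V(G)$. The essential content is therefore the connectedness of
\[
H \;:=\; \G[(U_1 \setminus X_i) \sqcup (U_2 \setminus N(X_i))] \;=\; \G\bigl[\,(U_1\setminus A_1) \sqcup \bigsqcup_{j\neq i} X_j \;\sqcup\; A_2 \sqcup \bigsqcup_{j\neq i} N(X_j)\,\bigr].
\]
Inside $H$, the subgraph $\G[(U_1 \setminus A_1) \sqcup A_2]$ is connected by assumption, and each $G_j = \G[X_j \sqcup N(X_j)]$ ($j\neq i$) is connected. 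So it is enough to exhibit, for every $j \neq i$, an edge of $H$ joining $G_j$ to $\G[(U_1\setminus A_1)\sqcup A_2]$.

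This is where the global connectivity of $G$ enters, and it is the key point of the argument. I would argue as follows: vertices of $X_j \subseteq A_1$ have, by independence of $A$, no neighbors in $A_2$, and by the connected-component property of $G_j$ no neighbors in $N(X_k)$ for $k\neq j$; hence every edge of $G$ incident to $X_j$ lies inside $G_j$. Similarly, vertices of $N(X_j) \subseteq N(A_1) \subseteq U_2$ cannot be adjacent to $X_k$ for $k\neq j$ (separate components of $\G[A_1\sqcup N(A_1)]$), so their only edges leaving $G_j$ land in $U_1 \setminus A_1$. Thus the set of edges of $G$ with exactly one endpoint in $V(G_j)$ is precisely the set of edges from $N(X_j)$ to $U_1\setminus A_1$. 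Since $G$ is connected and $V(G_j)\subsetneq V(G)$, at least one such edge exists, and it is visibly present in $H$.

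Combining these pieces, $\G[X_i \sqcup N(X_i)]$ and $H$ are both connected with spanning union, so by Proposition \ref{villaprop} the hyperplane $\mathcal{H}_{X_i}\cap \sigma_G^\vee$ is a facet. Remark \ref{egal} then identifies it with the facet $\mathcal{H}_{A'}\cap\sigma_G^\vee$ attached to the two-sided maximal independent set $A'$, as required. The main obstacle is the connectivity step for $H$; once the clean description of edges leaving $G_j$ is in place, the rest is bookkeeping with Proposition \ref{prop3} and Proposition \ref{villaprop}.
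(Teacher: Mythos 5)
Your proof is correct and uses essentially the same idea as the paper's: connectedness of $G$ forces, for each $j\neq i$, an edge from $N(X_j)$ to $U_1\setminus A_1$, which makes the complementary induced subgraph connected and verifies the facet criterion of Proposition~\ref{villaprop}. The paper splits into the one-sided ($A_2=\emptyset$) and two-sided cases and argues more tersely (producing vertices $x_i\in N(X_i)$ adjacent to $U_1\setminus A_1$ via $N(N(A_2))$), whereas you give a uniform and more explicit account via the characterization of edges leaving $V(G_j)$, but the substance matches.
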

\begin{proof}
We have two cases to examine:\\

\noindent$\textbf{(i)} $ Let $A_1=U_1 \backslash \{u\}$. We obtain the two-sided maximal independent sets $X_i \sqcup (\bigsqcup_{j \neq i} N(X_j)) $. Since $G$ is connected, for each $i \in [d]$, there exists a vertex $x_i \in N(X_i)\subseteq N(A_1)$ such that $\{u,x_i\} \in E(G)$. The  associated subgraphs $\G\{X_i \sqcup (\bigsqcup_{j \neq i} N(X_j))\}$ have therefore two connected components. Thus, these maximal independent sets form facets of $\sigma_G^{\vee}$. \\

\noindent$\textbf{(ii)} $ Let $A=A_1 \sqcup A_2$ be a two-sided maximal independent set. We obtain again the two-sided maximal independent sets $X_i \sqcup (A_2 \sqcup \bigsqcup_{j \neq i} N(X_j))$. Since $G$ is connected, $N(N(A_2)) \supset A_2 \sqcup \bigcup_{i \in [k]} x_i$, where $x_i \in N(X_i)$. Therefore, the associated subgraphs 
$\G\{X_i \sqcup (A_2 \sqcup \bigsqcup_{j \neq i} N(X_j))\}$ have two connected components. Thus, these maximal independent sets form facets of $\sigma_G^{\vee}$. \\

\noindent In particular, if $A_2 \neq \emptyset$, one can state the proposition symmetrically with $G[A_2 \sqcup N(A_2)]$ having $d$ connected components and $\G[A_1 \sqcup N(A_1)]$ being connected.
\end{proof}

\begin{ex}\label{sameexamplefig}
Consider the graph $G \subsetneq K_{4,4}$ from \hyperref[notconnected]{Example \ref*{notconnected}} and the maximal two-sided independent set $A=\{1,2,5,6\}$. The induced subgraph $\G[A_2 \sqcup N(A_2)]$ is connected. The two connected bipartite graphs of $\G[A_1 \sqcup N(A_1)]$ have the vertex sets $X_1 \sqcup N(X_1) := \{1\}\sqcup \{8\}$ and $X_2 \sqcup N(X_2):=\{2\}\sqcup \{7\}$. Hence, we obtain the following two-sided maximal independent sets forming facets of $\sigma_G^{\vee}$: $A':=X_1 \sqcup A_2\sqcup N(X_2) = \{1,5,6,7\}  \text{ and } A'':=X_2 \sqcup A_2\sqcup N(X_1) = \{2,5,6,8\}.$ We see in the figure below that their associated subgraphs have two connected components while $\G\{A\}$ has three connected components. In particular, we will observe in \hyperref[faceexampless]{Example \ref*{faceexampless}} that $(\mathcal{H}_{A_1} \cap \sigma_G^{\vee})^{*}$ is actually a two-dimensional face of $\sigma_G$.

\begin{center}
%\fbox{
\colorbox{white}{
\begin{minipage}[b]{0.27\linewidth}
\begin{tikzpicture}[baseline=1,scale=1.5,every path/.style={>=latex},every node/.style={draw,circle,fill=white,scale=0.6}]
  \node            (a2) at (0,1.5)  {2};
  \node            (a3) at (0,1)  {3};
  \node            (a4) at (0,0.5)  {4};
 %\node      [draw=none, fill=none, scale =1.5]       (a1) at (2.3, 0.75) {$G_*[A_i] \cong a_i$:};
\node       [rectangle]     (b8) at (1.5,0.5)  {8};
\node       [rectangle]     (b7) at (1.5,1)  {7}; 
 \node       [rectangle]     (b6) at (1.5,1.5)  {6};
  \node            (a1) at (0,2) {1};
  \node       [rectangle]     (b5) at (1.5,2) {5};
\node [draw =none, fill=none,scale=1.6](e) at (0.75, -0.4) {$G$};

    \draw[-] (a1) edge (b8);

    \draw[-] (a2) edge (b7);

    \draw[-] (a3) edge (b5);
    \draw[-] (a3) edge (b6);
    \draw[-] (a3) edge (b7);
    \draw[-] (a3) edge (b8);

    \draw[-] (a4) edge (b6);
    \draw[-] (a4) edge (b7);
    \draw[-] (a4) edge (b8);
    \draw[-] (a4) edge (b5);
 
\end{tikzpicture}
\end{minipage}

\begin{minipage}[b]{0.24\linewidth}
\begin{tikzpicture}[baseline=1,scale=1.5,every path/.style={>=latex},every node/.style={draw,circle,fill=white,scale=0.6}]
  \node         [fill=yellow]   (a2) at (0,1.5)  {2};
  \node      [draw=cyan]     (a3) at (0,1)  {3};
  \node         [draw=cyan]    (a4) at (0,0.5)  {4};
 %\node      [draw=none, fill=none, scale =1.5]       (a1) at (2.3, 0.75) {$G_*[A_i] \cong a_i$:};
\node       [rectangle]     (b8) at (1.5,0.5)  {8};
\node       [rectangle]     (b7) at (1.5,1)  {7}; 
 \node       [rectangle, fill=yellow,draw=cyan]     (b6) at (1.5,1.5)  {6};
  \node    [fill=yellow]        (a1) at (0,2) {1};
  \node        [fill=yellow,draw=cyan] [rectangle]     (b5) at (1.5,2) {5};
\node [draw =none, fill=none,scale=1.6](e) at (0.75, -0.4) {$\G\{A\}$};

    \draw[-] (a1) edge (b8);

    \draw[-] (a2) edge (b7);

    \draw[-,cyan] (a3) edge (b6);
    \draw[-,cyan] (a3) edge (b5);
    \draw[-,cyan] (a4) edge (b5);
   \draw[-,cyan] (a4) edge (b6);

\end{tikzpicture}
\end{minipage}

\begin{minipage}[b]{0.24\linewidth}
\begin{tikzpicture}[baseline=1,scale=1.5,every path/.style={>=latex},every node/.style={draw,circle,fill=white,scale=0.6}]
  \node           (a2) at (0,1.5)  {2};
  \node       [draw=cyan]    (a3) at (0,1)  {3};
  \node         [draw=cyan]   (a4) at (0,0.5)  {4};
 %\node      [draw=none, fill=none, scale =1.5]       (a1) at (2.3, 0.75) {$G_*[A_i] \cong a_i$:};
\node       [rectangle]     (b8) at (1.5,0.5)  {8};
\node       [fill=yellow, rectangle,draw=cyan]     (b7) at (1.5,1)  {7}; 
 \node       [fill=yellow, rectangle,draw=cyan]     (b6) at (1.5,1.5)  {6};
  \node      [fill=yellow]      (a1) at (0,2) {1};
  \node       [fill=yellow,rectangle,draw=cyan]     (b5) at (1.5,2) {5};
\node [draw =none, fill=none,scale=1.6](e) at (0.75, -0.4) {$\G\{A'\}$};

    \draw[-] (a1) edge (b8);

    \draw[-] (a2) edge (b7);

    \draw[-,cyan] (a3) edge (b6);
    \draw[-,cyan] (a3) edge (b7);
    \draw[-,cyan] (a3) edge (b5);

  \draw[-,cyan] (a4) edge (b6);
    \draw[-,cyan] (a4) edge (b7);
    \draw[-,cyan] (a4) edge (b5);

\end{tikzpicture}
\end{minipage}

\begin{minipage}[b]{0.24\linewidth}
\begin{tikzpicture}[baseline=1,scale=1.5,every path/.style={>=latex},every node/.style={draw,circle,fill=white,scale=0.6}]
  \node         [fill=yellow]   (a2) at (0,1.5)  {2};
  \node           [draw=cyan](a3) at (0,1)  {3};
  \node       [draw=cyan]  (a4) at (0,0.5)  {4};
 %\node      [draw=none, fill=none, scale =1.5]       (a1) at (2.3, 0.75) {$G_*[A_i] \cong a_i$:};
\node       [rectangle,fill=yellow,draw=cyan]     (b8) at (1.5,0.5)  {8};
\node       [rectangle]     (b7) at (1.5,1)  {7}; 
 \node       [rectangle, fill=yellow,draw=cyan]     (b6) at (1.5,1.5)  {6};
  \node       [draw=cyan]    (a1) at (0,2) {1};
  \node        [fill=yellow,draw=cyan] [rectangle]     (b5) at (1.5,2) {5};
\node [draw =none, fill=none,scale=1.6](e) at (0.75, -0.4) {$\G\{A''\}$};

    \draw[-,cyan] (a1) edge (b8);

    \draw[-] (a2) edge (b7);

\draw[-,cyan] (a3) edge (b5);
    \draw[-,cyan] (a3) edge (b6);

    \draw[-,cyan] (a3) edge (b8);

\draw[-,cyan] (a4) edge (b5);
    \draw[-,cyan] (a4) edge (b6);
  
    \draw[-,cyan] (a4) edge (b8);

\end{tikzpicture}
\end{minipage}

 }
%}

\end{center}

\end{ex}

\noindent With the motivation of \hyperref[prop239]{Proposition \ref*{prop239}} and \hyperref[firstindependentsets]{Definition \ref*{firstindependentsets}}, we are ready to prove \hyperref[11thm]{Theorem \ref*{11thm}}.

\begin{proof} [Proof of Theorem \ref{11thm}]
By \hyperref[halfonetoone]{Theorem \ref*{halfonetoone}} and \hyperref[villaprop]{Proposition \ref*{villaprop}}, the map $\pi$ is surjective. Let $A=A_1\in \mathcal{I}_G^{(1)}$ be a one-sided and $B\in \mathcal{I}_G^{(1)}$ be a two-sided first independent set. Suppose that we have the equality $ (\mathcal{H}_{A_1} \cap \sigma_G^{\vee})^* = (\mathcal{H}_{B_1} \cap \sigma_G^{\vee})^*$. Then $\mathfrak{a} = e_i$ for some $i \in [m]$ by \hyperref[lemonesid]{Lemma \ref*{lemonesid}} with $A_1 = U_1\backslash \{i\}$. Since $A_1$ is not contained in any two-sided independent set, $B_1 \neq A_1$. Assume that $B_1 \supseteq \{i\}$ or $B_1 \subsetneq A_1$, then $e^i + f^j \in \mathcal{H}_{B_1} \cap \sigma_G^{\vee}$ but not in $\mathcal{H}_{A_1} \cap \sigma_G^{\vee}$ where $m+j \in N(B_1)$ or $m+j \in B_2$ respectively. Thus $A$ and $B$ are either both one-sided or both two-sided. If they are both one-sided, then $A=B$. Let both of them be two-sided and assume that we have $A_1= N(B_2)$ and $B_2 = N(A_1)$ and that $N(N(B_2))= B_2$ and $N(N(A_1)) = A_1$. This means that $G$ is not connected. Therefore, we obtain that $A=B$.
\end{proof}

\noindent One may also describe the supporting hyperplane for the extremal ray $\mathfrak{a}=\pi(A)$ by the degree sequence of the graph $G\{A\}$. 

\begin{defn}
The {\it degree (valency) sequence} of a graph $G \subseteq K_{m,n}$ is the $(m+n)$-tuple of the degrees (valencies) of its vertices. Let $A \in \mathcal{I}_G^{(1)}$ be a first independent set. We denote the degree sequence of the associated subgraph $\G\{A\}$ by $\Val(A)$ $\in \sigma_G^{\vee} \cap M$. We denote the supporting hyperplane of $\sigma_G \subseteq N_{\QQ}$ for $m \in \sigma_G^{\vee}$ as 
$$H_m :=\{x \in N_{\QQ} \ | \ \langle m,x \rangle =0\}.$$
\end{defn}

\begin{prop}\label{hilbert}
The extremal ray generators of the cone $\sigma_G^{\vee}$ for a bipartite graph $G$ form the Hilbert basis of $\sigma_G^{\vee}$. 
\end{prop}

\begin{proof}
See [\cite{valenciavillareal}, Lemma 3.10]. The notation $\mathbb{R}_{+}\mathcal{A}$ used in this paper is $\sigma^{\vee}_G \subseteq M \otimes_{\ZZ}{\mathbb{R}}$ in our context. Also, $\mathbb{N} \mathcal{A}$ stands for the semigroup generated by the generators of $\sigma^{\vee}_G$ with nonnegative integer coefficients. 
\end{proof}

\begin{thm}\label{thmgibi}
Let $A \in \mathcal{I}_G^{(1)}$ be a first independent set. Then, the extremal ray generators of the facet $\pi(A)^{*}=\mathfrak{a}^* \prec \sigma_G^{\vee}$ are exactly the extremal ray generators of $\sigma_{\G\{ A \}}^{\vee}$. Moreover, one obtains that $\mathfrak{a}=(\mathcal{H}_{A_i} \cap \sigma_G^{\vee})^* = H_{\Val(A)} \cap \sigma_G$.
\end{thm}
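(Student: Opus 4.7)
The plan is to read off the facet $\mathfrak{a}^* = \mathcal{H}_{A_1} \cap \sigma_G^{\vee}$ directly from the edge generators of $\sigma_G^{\vee}$ by asking which of them satisfy the linear equation defining $\mathcal{H}_{A_1}$. The key observation is that for the generator $e^i + f^j$ corresponding to an edge from $i \in U_1$ to $m+j \in U_2$, the defining functional $\sum_{v \in A_1} x_v - \sum_{v \in N(A_1)} x_v$ takes the value $0$ precisely when the edge lies entirely in $A_1 \sqcup N(A_1)$ or entirely in $(U_1 \setminus A_1) \sqcup (U_2 \setminus N(A_1))$, and the value $-1$ when it crosses between these two parts. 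The remaining combinatorial possibility $i \in A_1$, $m+j \notin N(A_1)$ is ruled out by the definition of $N(A_1)$, and no positive value occurs because $A_1$ is independent.

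Once this case distinction has been carried out, I would invoke the standard fact that a face of a finitely generated cone is itself generated by those generators which lie on the corresponding supporting hyperplane. This identifies $\mathfrak{a}^*$ with the cone on $\{e^i + f^j : (i, m+j) \in E(\G\{A\})\}$, which is by definition a generating set of $\sigma_{\G\{A\}}^{\vee}$. Hence $\mathfrak{a}^*$ and $\sigma_{\G\{A\}}^{\vee}$ coincide as cones and, in particular, share the same extremal ray generators. The independence of the choice of index $i \in \{1,2\}$ in the two-sided case is provided by Remark \ref{egal}, while for a one-sided $A_1 = U_1 \setminus \{i\}$ the identification $\mathcal{H}_{A_1} = H_{e_i}$ from Lemma \ref{lemonesid} yields the same conclusion, since the generators of $\sigma_G^{\vee}$ pairing to $0$ with $e_i$ are exactly the edges of $G$ not incident to $i$, matching the edges of $\G\{A\}$.

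For the second identity $\mathfrak{a} = H_{\Val(A)} \cap \sigma_G$, I would exploit that $\Val(A)$ is by construction the sum $\sum_{(i, m+j) \in E(\G\{A\})} (e^i + f^j)$ of the edge vectors of $\G\{A\}$. For every $x \in \sigma_G$ each summand $\langle e^i + f^j, x\rangle$ is nonnegative, so $\langle \Val(A), x\rangle = 0$ is equivalent to the simultaneous vanishing of all these pairings, which in turn is exactly the condition that $x$ is perpendicular to every generator of $\mathfrak{a}^*$, namely $x \in \mathfrak{a}$. The main obstacle I anticipate is only the bookkeeping in the first step, in particular making sure that in the two-sided situation the two components of $\G\{A\}$ really are $\G[A_1 \sqcup N(A_1)]$ and $\G[A_2 \sqcup N(A_2)]$ (which follows from $A_2 = U_2 \setminus N(A_1)$ in Proposition \ref{prop3}); the rest of the argument is structural and uses no more than the duality between faces of $\sigma_G^{\vee}$ and faces of $\sigma_G$.
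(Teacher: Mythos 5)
Your argument is correct and reaches the same conclusion by essentially the same underlying computation: identify exactly which edge generators $e^i+f^j$ lie on $\mathcal{H}_{A_1}$, recognise them as the edges of $\G\{A\}$, and then use that $\Val(A)$ is the sum of precisely those generators. The small difference lies in how each step is justified. The paper backs the first step by appealing to the Hilbert-basis property (\hyperref[hilbert]{Proposition~\ref*{hilbert}}), whereas you invoke the standard (and more elementary) fact that a face of a finitely generated cone cut out by a supporting hyperplane is generated by those generators lying on the hyperplane; this makes the Hilbert-basis lemma unnecessary here, which is a mild simplification. For the identity $\mathfrak{a}=H_{\Val(A)}\cap\sigma_G$, the paper argues abstractly via $\Val(A)\in\Relint(\mathfrak{a}^*)$, while you unpack the positivity argument that underlies that claim ($\langle\Val(A),x\rangle=0$ with all summands nonnegative forces $x\perp$ every generator of $\mathfrak{a}^*$), arriving at $(\mathfrak{a}^*)^\perp\cap\sigma_G=\mathfrak{a}$ directly. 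One bookkeeping detail worth stating explicitly when invoking the standard face fact: the functional $\sum_{v\in A_1}x_v-\sum_{v\in N(A_1)}x_v$ is nonpositive on all generators of $\sigma_G^{\vee}$ (the value $+1$ cannot occur because $A_1$ is independent, as you note), so the cone lies in $\mathcal{H}_{A_1}^-$ rather than $\mathcal{H}_{A_1}^+$; with that sign pinned down, the rest of the argument is airtight.
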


\begin{proof}
Let $\mathfrak{a}^* = \mathcal{H}_{A_1} \cap \sigma_G^{\vee}\prec \sigma_G^{\vee}$ be the facet associated to the first independent set $A$. Since the extremal rays of $\sigma_G^{\vee}$ form the Hilbert basis by \hyperref[hilbert]{Proposition \ref*{hilbert}},  the facet $\mathfrak{a}^*$ is generated by the extremal rays of $\sigma_{G'}^{\vee}$, where $G'$ is a subgraph of $G$. By the definition of the supported hyperplane $\mathcal{H}_{A_1}$, the extremal rays of $\sigma_{\G\{A_1\}}^{\vee}$ are in the set of extremal ray generators of $\mathfrak{a}^*$. If $A$ is two-sided, then $\sigma_{\G\{A_2\}}^{\vee}$ is also included in $\mathfrak{a}^*$. These are the only extremal ray generators of $\mathfrak{a}^*$. To show this, we examine the edges in $E(G) \backslash E(G\{A\})$ in two cases: \\

\noindent $\bullet$ If $A=U_1 \backslash \{i\}$ is one-sided, then for $j \in [m ]$, $e^i + f^j \in M$ is not in the generator set of $\mathfrak{a}^*$.  \\
\noindent $\bullet$ If $A=A_1 \sqcup A_2$ is two-sided, then the remaining rays $e^i + f^j$ for $i \in N(A_2)$ and $j \in N(A_1)$ with $\{i,j\} \in E(G)$ are not in the generator set of $\mathfrak{a}^*$.\\

\noindent By construction, $\Val(A) \in \sigma^{\vee}_G \cap M$. We have $\mathfrak{a} = H_{\Val(A)} \cap \sigma_G$ if and only if $\Val(A) \in \Relint(\mathfrak{a}^{*})$. Since we chose $\Val(A) \in \sigma_G^{\vee}$ to be the sum of the generators of the facet $\mathfrak{a}^*$, we obtain that $\Val(A) \in \Relint(\mathfrak{a}^{*})$. 
\end{proof}

\begin{ex}
Let us recall the graph $G$ from \hyperref[tekindependentli]{Example \ref*{tekindependentli}}. The one-sided first independent sets are in form $U_i \backslash \{v\}$ for $i \in \{1,2\}$ with $v \neq 4$. Consider the only two-sided first independent set $A=\{1,2,3,5\}$. The facet $\mathfrak{a}^*$ is generated by the generators of $\sigma^{\vee}_{\G\{A\}}$ where $\G\{A\}$ is the third bipartite graph in \hyperref[benimteo]{Figure \ref*{benimteo}}. We calculate $\Val(A)=[3,3,3,1,1,3,3,3] \in \sigma_G^{\vee}$ and obtain that $\mathfrak{a}=H_{\Val(A)} \cap \sigma_G = e_4 - f_1 \in \sigma_G$.  
\end{ex}

\subsection{Description of the faces of an edge cone} \label{facesection}
In \hyperref[deformationtheory]{Section \ref*{deformationtheory}}, we remark that one needs to study the compact edges and compact 2-faces of the cross-cut $Q(R)$ for the homogeneous piece $T^1_{\TV(G)}(-R)$ of the vector space of first order deformations of the toric variety $\TV(G)$. Therefore we investigate the combinatorial description of the faces of $\sigma_G$ in terms of graphs in this section. We introduce the technique to find the faces of $\sigma_G$ by intersecting the induced subgraphs $G\{A\}$ associated to the first independent sets.
\begin{lem}\label{dimensionlemma}
Let $G\subseteq K_{m,n}$ be a bipartite graph with $k$ connected components. Then $\dim(\sigma_G)=m+n-1$ and $\dim(\sigma_G^{\vee})=m+n-k$.
\end{lem}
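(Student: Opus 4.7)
The plan is to establish the two dimension formulas by separate arguments. For $\dim \sigma_G^\vee = m+n-k$, I would generalize Proposition~\ref{bubaslangic}. Let $G_1,\dots,G_k$ denote the connected components of $G$, each with vertex set $V(G_\ell) = U_1^\ell \sqcup U_2^\ell \subseteq U_1 \sqcup U_2$. The ray generators $e^i + f^j$ with $(i,j) \in E(G_\ell)$ all lie in the coordinate subspace of $\QQ^{m+n}$ indexed by $V(G_\ell)$. Since the vertex sets $V(G_1),\dots,V(G_k)$ are pairwise disjoint, these coordinate subspaces are pairwise orthogonal, and hence
\[
\dim \sigma_G^\vee \;=\; \sum_{\ell=1}^{k} \dim \sigma_{G_\ell}^\vee.
\]
Applying Proposition~\ref{bubaslangic} to each connected component yields $\dim \sigma_{G_\ell}^\vee = |V(G_\ell)| - 1$, and summing over $\ell$ gives $\sum_\ell (|V(G_\ell)| - 1) = (m+n) - k$, as desired.

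For $\dim \sigma_G = m+n-1$, the ambient space $N_\QQ = \QQ^{m+n}/\overline{(1,-1)}$ has dimension $m+n-1$, so it suffices to show that $\sigma_G$ is full-dimensional in $N_\QQ$. By the standard duality between strong convexity and full-dimensionality, this is equivalent to showing that $\sigma_G^\vee \subseteq M_\QQ$ contains no nontrivial linear subspace. I would observe that every generator $e^i + f^j$ has exactly two nonzero entries, both equal to $+1$, so it satisfies $\langle (1,\ldots,1),\, e^i + f^j \rangle = 2 > 0$. Consequently $\sigma_G^\vee$ is contained in the closed half-space $\{x \in \QQ^{m+n} : \sum_p x_p \geq 0\}$ with only the origin on the bounding hyperplane. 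This forbids any nontrivial line inside $\sigma_G^\vee$, so $\sigma_G^\vee$ is strongly convex in $\QQ^{m+n}$ and \emph{a fortiori} in $M_\QQ$, yielding $\dim \sigma_G = \dim N_\QQ = m+n-1$.

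The main subtlety, more than a genuine obstacle, is keeping the two ambient spaces straight: the formula for $\sigma_G^\vee$ is a statement in $\QQ^{m+n}$ (equivalently in $M_\QQ$, which contains $\sigma_G^\vee$), whereas the formula for $\sigma_G$ is measured in the quotient $N_\QQ$, where the one-dimensional subspace $\overline{(1,-1)}$ has already been collapsed; note in particular that for disconnected $G$ the cone $\sigma_G$ still carries a $(k-1)$-dimensional lineality space inside $N_\QQ$, which is compatible with being full-dimensional. Once the ambient spaces are fixed, both parts reduce either to the connected case of Proposition~\ref{bubaslangic} applied component-wise to the block-diagonal incidence matrix, or to the elementary nonnegativity observation about the generators.
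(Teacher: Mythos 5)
Your proof is correct and follows essentially the same route as the paper: the first part is the paper's block-diagonal incidence-matrix rank argument rephrased as a decomposition of $\sigma_G^\vee$ into pairwise-orthogonal coordinate subspaces, and the second part recovers full-dimensionality of $\sigma_G$ in $N_\QQ$ from strong convexity of $\sigma_G^\vee$. The only place you are more explicit than the paper is in actually verifying that $\sigma_G^\vee$ contains no nontrivial linear subspace (via the positive coordinate sum of every generator $e^i+f^j$), a fact the paper asserts without proof.
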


\begin{proof}
Recall that by {\hyperref[bubaslangic]{Proposition \ref*{bubaslangic}}}, if $G$ is connected, then the rank of the incidence matrix $A_G$ is $m+n-1$. Suppose that $G$ has $k$ connected components $G_i$. Then the incidence matrix $A_G$ is
\[\begin{bmatrix}
    A_{G_1}       &0 & 0 & \dots & 0 \\
    0           & A_{G_2}& 0 & \dots & 0 \\
   \vdots& 0& \ddots & & 0\\
   \vdots&\vdots&& \ddots& \vdots \\
    0      & 0 & 0& \dots & A_{G_k}
\end{bmatrix}\]
Therefore the rank of $A_G$, i.e.\ dimension of the dual edge cone is $m+n-k$. Furthermore, since $\sigma_G^{\vee}$ contains no linear subspace, the edge cone $\sigma_G\subseteq N_{\QQ}$ is full dimensional and hence $\dim(\sigma_G)=m+n-1$.  
\end{proof}

\begin{thm}\label{facetheorem}
Let $ S \subseteq \mathcal{I}_G^{(1)}$ be a subset of $d$ first independent sets and let $\pi$ be the bijection from \hyperref[11thm]{Theorem \ref*{11thm}}. The extremal ray generators $\pi(S)$ span a face of dimension $d$ if and only if the dimension of the dual edge cone of the spanning subgraph $\G[S]:=\bigcap_{A \in S} \G\{A\}$ is $m+n-d-1$, i.e.\ $\G[S]$ has $d+1$ connected components. In particular, the face is equal to $H_{\Val_S} \cap \sigma_G$ where $\Val_S$ is the degree sequence of the graph $\G[S]$.
\end{thm}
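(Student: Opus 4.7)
The plan is to use cone--cone duality between $\sigma_G$ and $\sigma_G^{\vee}$ to translate the question about faces of $\sigma_G$ into a combinatorial statement about intersections of facets of $\sigma_G^{\vee}$. Let $F \preceq \sigma_G$ denote the smallest face containing all the extremal rays $\pi(S)$. By the standard duality of face lattices,
\[ F^{*} = \bigcap_{A \in S} \pi(A)^{*} \preceq \sigma_G^{\vee}. \]
The first task is to describe $F^{*}$ explicitly as a dual edge cone. By \hyperref[hilbert]{Proposition \ref*{hilbert}} the extremal rays of $\sigma_G^{\vee}$ are exactly the Hilbert basis $\{e^i + f^j : (i,j) \in E(G)\}$, and by \hyperref[thmgibi]{Theorem \ref*{thmgibi}} the facet $\pi(A)^{*}$ is generated by precisely those $e^i + f^j$ with $(i,j) \in E(\G\{A\})$. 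The intersection of faces is again a face, and its extremal rays are the extremal rays of the ambient cone that lie in every intersected face. Thus the extremal rays of $F^{*}$ correspond to the edges in $\bigcap_{A \in S} E(\G\{A\}) = E(\G[S])$, so $F^{*}$ coincides with the image of $\sigma_{\G[S]}^{\vee}$ inside $M_{\mathbb{Q}}$.

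From here the argument reduces to a dimension count. Applying \hyperref[dimensionlemma]{Lemma \ref*{dimensionlemma}} to the spanning subgraph $\G[S]$ (which has $m+n$ vertices and, say, $k$ connected components) gives $\dim F^{*} = \dim \sigma_{\G[S]}^{\vee} = m+n-k$. Since $\sigma_G$ is full-dimensional in the $(m+n-1)$-dimensional space $N_{\mathbb{Q}}$, the standard formula $\dim F + \dim F^{*} = \dim \sigma_G$ yields $\dim F = k-1$. Hence $\dim F = d$ if and only if $k = d+1$, equivalently $\dim \sigma_{\G[S]}^{\vee} = m+n-d-1$, which is the stated equivalence.

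For the ``in particular'' clause, observe that $\Val_S = \sum_{(i,j) \in E(\G[S])}(e^i + f^j)$ is the sum of all extremal ray generators of $F^{*}$, so it lies in $\Relint(F^{*})$. Consequently
\[ F = (F^{*})^{\bot} \cap \sigma_G = H_{\Val_S} \cap \sigma_G, \]
exactly as in the closing argument of \hyperref[thmgibi]{Theorem \ref*{thmgibi}}. The main conceptual step is the identification $F^{*} = \sigma_{\G[S]}^{\vee}$; no substantial obstacle is expected beyond verifying that the Hilbert basis characterisation of extremal rays behaves correctly under intersection of faces, and being careful with the lattice conventions when quoting \hyperref[dimensionlemma]{Lemma \ref*{dimensionlemma}} for the disconnected spanning subgraph $\G[S]$.
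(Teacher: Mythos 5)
Your argument is correct and follows the same route as the paper's proof: dualize via $F^{*}=\bigcap_{A\in S}\pi(A)^{*}$, identify $F^{*}$ with $\sigma_{\G[S]}^{\vee}$ using the facet description from Theorem~\ref{thmgibi} and the Hilbert-basis fact, apply Lemma~\ref{dimensionlemma} to $\G[S]$, and invoke $\dim F+\dim F^{*}=m+n-1$ together with $\Val_S\in\Relint(F^{*})$. You are somewhat more explicit than the paper in writing out the extremal-ray matching that gives $F^{*}=\sigma_{\G[S]}^{\vee}$, but the substance is the same.
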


\begin{proof} 

By \hyperref[11thm]{Theorem \ref*{11thm}}, if $A \in S$, then the associated facet $a^* \preceq \sigma_G^{\vee}$ is generated by the extremal ray generators of $\sigma_{\G\{A\}}^{\vee}$. Hence, intersecting these induced subgraphs $\G\{A\}$ is equivalent to intersecting the extremal ray generators of the facets. Every face of $\sigma_G^{\vee}$ is the intersection of the facets it is
contained in. This intersection forms a face $\tau$ of $\sigma_G$ (and therefore a face of $\sigma^{\vee}_G$) since we have:\\
\begin{eqnarray*}
 \tau^* = \bigcap_{\mathfrak{a} \in \pi(S)} \mathfrak{a}^{*} =  \bigcap_{A \in S} (H_{\Val(A)} \cap \sigma_G)^{*} =  (H_{\Val_S} \cap \sigma_G)^*   
 \end{eqnarray*}
where $\Val_S \in \Relint( \sigma^{\vee}_{\G[S]}) \subsetneq \sigma_G^{\vee}$. By \hyperref[dimensionlemma]{Lemma \ref*{dimensionlemma}}, $\di (\sigma_G^{\vee}) = \di (\sigma_G) = m+n-1$. Thus, the dimension of $\tau$ is $d$ if and only if the dimension of $\tau^{*}$ is $m+n-d-1$. Hence, this means that the dimension of the cone $\sigma_{\G[S]}^{\vee}$ is $m+n-d-1$. 
\end{proof}

\begin{cor}\label{tatli}

Let $\tau:=H_{\Val_S}\cap \sigma_G \preceq \sigma_G$ be a face of dimension $d$ which is given by the intersection of subgraphs formed by a subset $S \subsetneq \mathcal{I}_G^{(1)}$ where $|S| \geq d$. If $G[S] \subset G\{A'\}$ for some $A' \in \mathcal{I}_G^{(1)}\backslash S$, then the associated extremal ray generator $\mathfrak{a}'$ is also included in the extremal ray generators of the face $\tau$. 

\end{cor}

\begin{proof}
It follows from \hyperref[facetheorem]{Theorem \ref*{facetheorem}} by dropping the condition of $S$ consisting of $d$ elements and from the fact that every face is an intersection of facets it is contained in.
\end{proof}

\begin{prop}\label{prop2310}
The maximal independent sets of  \hyperref[prop239]{Proposition \ref*{prop239}} form $d$-dimensional faces $\tau \preceq \sigma_G$. Moreover $\tau = (\mathcal{H}_{A_1} \cap \sigma_G^{\vee})^* $. 
\end{prop}

\begin{proof}
Let $C^i$ denote the two-sided maximal independent sets $X_i \sqcup (A_2 \sqcup \bigsqcup_{j \neq i} N(X_j))$ for $i \in [d]$. By \hyperref[facetheorem]{Theorem \ref*{facetheorem}}, the dual edge cone of the intersection subgraph $\bigcap_{i \in [d]} \G\{C^i\}$ is $m+n-d-1$. Furthermore, since $\bigcap_{i \in [d]} \G\{C^i\} = \G\{A\}$, one obtains that $$\langle \mathfrak{c}^1, \ldots , \mathfrak{c}^d \rangle_{\QQ} = ((\mathcal{H}_{C_1^1} \cap \sigma_G^{\vee}) \cap \ldots \cap (\mathcal{H}_{C_1^d} \cap \sigma_G^{\vee}) )^{*} = (\mathcal{H}_{A_1} \cap \sigma_G^{\vee})^{*}.$$\end{proof}

\begin{prop}\label{nays}
Let $A$ be an independent set of $V(G)$.\ Then $\tau = H_{\Val(A)} \cap \sigma_G$ is a $d$-dimensional face of $\sigma_G$ where $m+n-d-1=\dim(\sigma^{\vee}_{\G\{A\}})$.
\end{prop}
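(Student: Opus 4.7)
The plan is to prove this proposition by identifying $\G\{A\}$ (or at least its component structure) as an intersection of associated subgraphs of first independent sets and then applying \hyperref[facetheorem]{Theorem \ref*{facetheorem}}.

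Since $\Val(A)=\sum_{(i,j)\in E(\G\{A\})}(e^i+f^j)$ is a sum of generators of $\sigma_G^{\vee}$, we have $\Val(A)\in\sigma_G^{\vee}\cap M$, so $H_{\Val(A)}$ is a supporting hyperplane of $\sigma_G$ and $\tau:=H_{\Val(A)}\cap\sigma_G$ is a face of $\sigma_G$. Using the duality $\dim\tau+\dim\tau^*=\dim\sigma_G=m+n-1$, where $\tau^*\preceq\sigma_G^{\vee}$ is the smallest face containing $\Val(A)$ in its relative interior, it suffices to show $\dim\tau^*=\dim\sigma^{\vee}_{\G\{A\}}=m+n-d-1$.

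A direct comparison of $\Val(A)$ with the facet equation $\mathcal{H}_{A'_1}$ from \hyperref[villaprop]{Proposition \ref*{villaprop}}, obtained by partitioning the edges of $\G\{A\}$ according to whether their endpoints lie in $A'_1$ and $A'_2$, shows that $\Val(A)\in\mathcal{H}_{A'_1}\cap\sigma_G^{\vee}$ if and only if $\G\{A\}\subseteq\G\{A'\}$. Setting $S(A):=\{A'\in\mathcal{I}_G^{(1)}:\G\{A\}\subseteq\G\{A'\}\}$, we therefore have $\tau^*=\bigcap_{A'\in S(A)}(\mathcal{H}_{A'_1}\cap\sigma_G^{\vee})$; by \hyperref[thmgibi]{Theorem \ref*{thmgibi}} its extremal rays are the $e^i+f^j$ with $(i,j)\in E(\G[S(A)])$, and \hyperref[dimensionlemma]{Lemma \ref*{dimensionlemma}} gives $\dim\tau^*=m+n-c$, where $c$ is the number of connected components of $\G[S(A)]$. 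The inclusion $\G\{A\}\subseteq\G[S(A)]$ forces $c\leq k$, where $k$ denotes the number of components of $\G\{A\}$, hence $\dim\tau^*\geq\dim\sigma^{\vee}_{\G\{A\}}$; the reverse inequality requires that no additional edge in $E(\G[S(A)])\setminus E(\G\{A\})$ merges two components of $\G\{A\}$.

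Any such additional edge $(i,j)$ necessarily connects distinct components $C_i,C_j$ of $\G\{A\}$: within a single block of $G[[A]]$ every $G$-edge already belongs to $\G\{A\}$ by construction. To rule out $(i,j)$ from $\G[S(A)]$, I would construct $A'\in S(A)$ with $(i,j)\notin\G\{A'\}$ by picking a collection $T$ of $\G\{A\}$-components with $C_j\in T$, $C_i\notin T$, closed under the meta-edge $C\to C'$ that exists whenever there is a $G$-edge from $C\cap U_1$ to $C'\cap U_2$, and setting $A'_1:=\bigsqcup_{C\in T}(C\cap U_1)$ and $A'_2:=\bigsqcup_{C\notin T}(C\cap U_2)$. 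Closure under $\to$ makes $A'$ a $G$-independent set, covering each component via $A'_1$ or $A'_2$ yields $\G\{A\}\subseteq\G\{A'\}$, and $C_i\notin T$ forces $(i,j)\notin\G\{A'\}$. The main obstacle is ensuring that the constructed $A'$ lies in $\mathcal{I}_G^{(1)}$: when $A\in\mathcal{I}_G^{(*)}$ this is precisely the content of \hyperref[prop2310]{Proposition \ref*{prop2310}}; for one-sided $A\subseteq U_i$ with $N(A)\subsetneq U_j$, the two-sided maximal extension $\tilde A:=A\sqcup(U_j\setminus N(A))\in\mathcal{I}_G^{(*)}$ satisfies $\G\{\tilde A\}=\G\{A\}$ (a short edge-comparison using $N(U_j\setminus N(A))\subseteq U_i\setminus A$), reducing to the previous case; in the remaining cases (one-sided $A$ with $N(A)=U_j$ and $|A|<|U_i|-1$, or two-sided non-maximal $A$), one either enlarges $T$ or splits the resulting $A'$ via \hyperref[prop239]{Proposition \ref*{prop239}} into a family of first independent sets whose associated subgraphs still intersect in $\G\{A\}$. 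Once the component counts of $\G[S(A)]$ and $\G\{A\}$ agree, $\dim\tau^*=m+n-d-1$ and hence $\dim\tau=d$.
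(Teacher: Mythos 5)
Your overall strategy — reduce via duality to computing $\dim\tau^*$, identify $\tau^*$ with $\sigma^\vee_{\G[S(A)]}$ where $S(A)=\{A'\in\mathcal{I}_G^{(1)}:\G\{A\}\subseteq\G\{A'\}\}$, and then show $\G[S(A)]$ and $\G\{A\}$ have the same number of connected components — is sound in outline and genuinely different from what the paper intends. The paper's proof, though terse, goes directly through \hyperref[villaprop]{Proposition~\ref*{villaprop}}: $\sigma^\vee_{\G\{A\}}$ is itself a face of $\sigma_G^\vee$, because it is cut out by the supporting hyperplanes $\mathcal{H}_{A_1}$, $\mathcal{H}_{A_2}$ (just $\mathcal{H}_A$ in the one-sided case) together with the coordinate hyperplanes $H_{e_u}$, $H_{f_v}$ for the isolated vertices $u,v$ of $\G\{A\}$; one checks directly that a generator $e^i+f^j$ lies on all of these if and only if $(i,j)\in E(\G\{A\})$. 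Since $\Val(A)$ is the sum of precisely these generators, $\Val(A)\in\Relint(\sigma^\vee_{\G\{A\}})$, so $\tau=(\sigma^\vee_{\G\{A\}})^*$ and the dimension formula is immediate. No passage through $\mathcal{I}_G^{(1)}$ is needed.

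The genuine gap in your argument is the final step, where you need to exclude every $G$-edge joining two distinct components of $\G\{A\}$ from $\G[S(A)]$. Your construction of an independent set $A'$ from a meta-closed collection $T$ of components is correct as far as producing an independent set with $\G\{A\}\subseteq\G\{A'\}$ and $(i,j)\notin\G\{A'\}$, but the claim that this $A'$ lies in $\mathcal{I}_G^{(1)}$ (or can always be replaced by one) is not established. Concretely: (a) your assertion that membership in $\mathcal{I}_G^{(*)}$ ``is precisely the content of Proposition~\ref{prop2310}'' is too strong — \hyperref[prop239]{Proposition~\ref*{prop239}}/\hyperref[prop2310]{Proposition~\ref*{prop2310}} only apply when one of the two parts of $\G\{A'\}$ is connected, whereas the $A'$ produced by your construction can have both $G[A'_1\sqcup N(A'_1)]$ and $G[A'_2\sqcup N(A'_2)]$ disconnected; (b) even with the split via \hyperref[prop239]{Proposition~\ref*{prop239}}, what you get is a family with $\bigcap\G\{\mathcal{B}_k\}=\G\{A'\}$, not $=\G\{A\}$ as you write — the needed conclusion (that some $\mathcal{B}_k$ excludes $(i,j)$) follows, but only after that correction; and (c) the catch-all clause ``one either enlarges $T$ or splits the resulting $A'$'' leaves the non-maximal and $N(A)=U_j$ cases genuinely unresolved. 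Since the cleaner route via \hyperref[villaprop]{Proposition~\ref*{villaprop}} and the coordinate hyperplanes sidesteps all of this, the extra machinery buys nothing here and the missing cases constitute a real gap.
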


\begin{proof}
It follows from \hyperref[villaprop]{Proposition \ref*{villaprop}} and \hyperref[facetheorem]{Theorem \ref*{facetheorem}}.
\end{proof}

\begin{ex}\label{faceexampless}
We examine the two and three-dimensional faces of $\sigma_G$ for $G \subsetneq K_{4,4}$ from \hyperref[notconnected]{Example \ref*{notconnected}}. We use the notation from \hyperref[11thm]{Theorem \ref*{11thm}}. The edge cone $\sigma_G$ is generated by the extremal ray generators $e_1,e_2,e_3,e_4,f_1,f_2,\mathfrak{a}', \mathfrak{a}''$. From the figure in \hyperref[sameexamplefig]{Example \ref*{sameexamplefig}}, we observe that $\G\{A\} = \G\{A'\} \cap \G\{A''\}$, thus $(\mathcal{H}_{A_1} \cap \sigma_G^{\vee})^{*}$ is a two-dimensional face spanned by $\mathfrak{a}'$ and $\mathfrak{a}''$. Furthermore, we see that the intersection of the associated subgraphs $\G\{A'\} \cap \G\{A''\}$ with another associated subgraph to an extremal ray of $\sigma_G$ has four connected components. The only pair of extremal rays which does not span a two-dimensional face of $\sigma_G$ is $\{e_3,e_4\}$. One can infer this in \hyperref[figure32]{Figure \ref*{figure32}} below: The intersection $\G\{U_1 \backslash \{3\}\} \cap \G\{U_1 \backslash \{4\}\}$ has the edge set consisting of only two edges $\{1,8\}$ and $\{2,7\}$. This implies that any triple of extremal ray generators containing $\{e_3,e_4\}$ does not span a three-dimensional face of $\sigma_G$. In particular by \hyperref[nays]{Proposition \ref*{nays}}, for the independent set $\{1,2\}$ we obtain a five-dimensional face of $\sigma_G$, since $\G\{\{1,2\}\}$ has six connected components as seen in the figure. More precisely this five-dimensional face is spanned by $e_3,e_4,f_1,f_2,\mathfrak{a}'$ and $\mathfrak{a}''$, as $\G\{\{1,2\}\}$ contains the associated subgraphs of these extremal rays. Lastly, a computation on the intersection of associated subgraphs shows that any triple not containing both  $e_3$ and $e_4$ spans a three-dimensional face of $\sigma_G$.\\

\begin{minipage}[b]{0.27\linewidth}
\begin{tikzpicture}[baseline=1,scale=1.5,every path/.style={>=latex},every node/.style={draw,circle,fill=white,scale=0.6}]
  \node            (a2) at (0,1.5)  {2};
  \node         (a3) at (0,1)  {3};
  \node            (a4) at (0,0.5)  {4};
 %\node      [draw=none, fill=none, scale =1.5]       (a1) at (2.3, 0.75) {$G_*[A_i] \cong a_i$:};
\node       [rectangle]     (b8) at (1.5,0.5)  {8};
\node       [rectangle]     (b7) at (1.5,1)  {7}; 
 \node       [rectangle]     (b6) at (1.5,1.5)  {6};
  \node            (a1) at (0,2) {1};
  \node       [rectangle]     (b5) at (1.5,2) {5};
\node [draw =none, fill=none,scale=1.6](e) at (0.75, -0.4) {$G$};

    \draw[-] (a1) edge (b8);

    \draw[-] (a2) edge (b7);

    \draw[-] (a3) edge (b5);
    \draw[-] (a3) edge (b6);
    \draw[-] (a3) edge (b7);
    \draw[-] (a3) edge (b8);

    \draw[-] (a4) edge (b6);
    \draw[-] (a4) edge (b7);
    \draw[-] (a4) edge (b8);
    \draw[-] (a4) edge (b5);
 
\end{tikzpicture}
\end{minipage}
\hspace{0.1cm}
\begin{minipage}[b]{0.24\linewidth}
\begin{tikzpicture}[baseline=1,scale=1.5,every path/.style={>=latex},every node/.style={draw,circle,fill=white,scale=0.6}]
  \node         [fill=yellow]   (a2) at (0,1.5)  {2};
  \node        [draw=cyan]      (a3) at (0,1)  {3};
  \node         [fill=yellow]    (a4) at (0,0.5)  {4};
 %\node      [draw=none, fill=none, scale =1.5]       (a1) at (2.3, 0.75) {$G_*[A_i] \cong a_i$:};
\node       [rectangle]     (b8) at (1.5,0.5)  {8};
\node       [rectangle]     (b7) at (1.5,1)  {7}; 
 \node       [rectangle]     (b6) at (1.5,1.5)  {6};
  \node      [fill=yellow]   (a1) at (0,2) {1};
  \node        [rectangle]     (b5) at (1.5,2) {5};
\node [draw =none, fill=none,scale=1.6](e) at (0.75, -0.4) {$\G\{U_1 \backslash \{3\}\}$};

    \draw[-] (a1) edge (b8);

    \draw[-] (a2) edge (b7);

    \draw[-] (a4) edge (b6);
    \draw[-] (a4) edge (b7);
    \draw[-] (a4) edge (b5);

  \draw[-] (a4) edge (b8);

\end{tikzpicture}
\end{minipage}
\hspace{0.1cm}
\begin{minipage}[b]{0.24\linewidth}
\begin{tikzpicture}[baseline=1,scale=1.5,every path/.style={>=latex},every node/.style={draw,circle,fill=white,scale=0.6}]
  \node        [fill=yellow]   (a2) at (0,1.5)  {2};
  \node        [fill=yellow]   (a3) at (0,1)  {3};
  \node        [draw=cyan]        (a4) at (0,0.5)  {4};
 %\node      [draw=none, fill=none, scale =1.5]       (a1) at (2.3, 0.75) {$G_*[A_i] \cong a_i$:};
\node       [rectangle]     (b8) at (1.5,0.5)  {8};
\node       [rectangle]     (b7) at (1.5,1)  {7}; 
 \node       [rectangle]     (b6) at (1.5,1.5)  {6};
  \node      [fill=yellow]      (a1) at (0,2) {1};
  \node       [rectangle]     (b5) at (1.5,2) {5};
\node [draw =none, fill=none,scale=1.6](e) at (0.75, -0.4) {$\G\{U_1 \backslash \{4\}\}$};

    \draw[-] (a1) edge (b8);

    \draw[-] (a2) edge (b7);

    \draw[-] (a3) edge (b6);
    \draw[-] (a3) edge (b7);
    \draw[-] (a3) edge (b5);

  \draw[-] (a3) edge (b8);

\end{tikzpicture}
\end{minipage}
\hspace{0.1cm}
\begin{minipage}[b]{0.24\linewidth}
\begin{tikzpicture}[baseline=1,scale=1.5,every path/.style={>=latex},every node/.style={draw,circle,fill=white,scale=0.6}]
  \node       [fill=yellow]  (a2) at (0,1.5)  {2};
  \node           (a3) at (0,1)  {3};
  \node        (a4) at (0,0.5)  {4};
 %\node      [draw=none, fill=none, scale =1.5]       (a1) at (2.3, 0.75) {$G_*[A_i] \cong a_i$:};
\node       [rectangle]     (b8) at (1.5,0.5)  {8};
\node       [rectangle]     (b7) at (1.5,1)  {7}; 
 \node       [rectangle]     (b6) at (1.5,1.5)  {6};
  \node        [fill=yellow]   (a1) at (0,2) {1};
  \node       [rectangle]     (b5) at (1.5,2) {5};
\node [draw =none, fill=none,scale=1.6](e) at (0.75, -0.4) {$\G\{\{1,2\}\}$};

    \draw[-] (a1) edge (b8);

    \draw[-] (a2) edge (b7);

\end{tikzpicture}
\end{minipage}


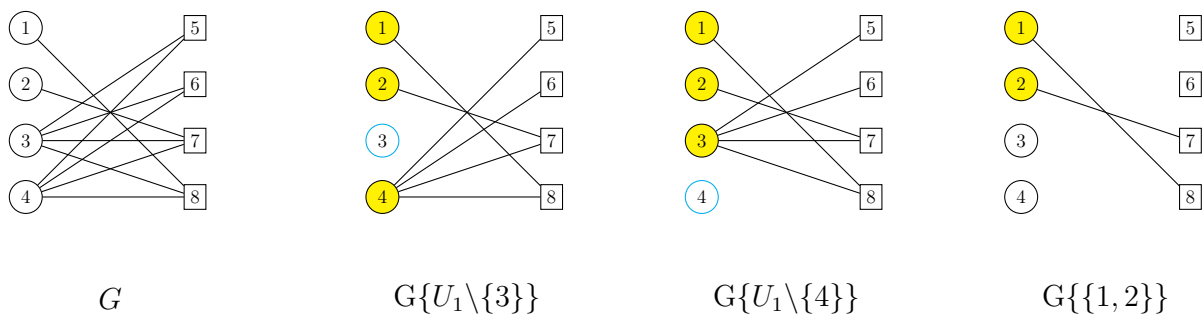
\captionof{figure}{Studying faces of the edge cone via intersecting associated subgraphs to first independent sets.} \label{figure32}
\end{ex}

\section{On the classification of the general case} \label{generalch}

This section starts with preliminaries about deformation theory and in particular the technique which we utilize for affine toric varieties. This motivates us to provide a detailed exposition of two and three-dimensional faces of the edge cone $\sigma_G$ for a connected bipartite graph $G$. Using the tools from \hyperref[facechar]{Section \ref*{facechar}}, we prove that the toric variety $\TV(G)$ is smooth in codimension 2 in \hyperref[smooth2co]{Theorem \ref*{smooth2co}}. Next, we prove that the non-simplicial three-dimensional faces of an edge cone are generated exactly by four extremal ray generators in \hyperref[existenceofadeg]{Theorem \ref*{existenceofadeg}}. Then, we conclude that the toric varieties associated to the edge cones having non-simplicial three-dimensional faces are not rigid in \hyperref[4tuplenonrigid]{Theorem \ref*{4tuplenonrigid}}. Moreover, we characterize the bipartite graphs whose edge cones have only simplicial three-dimensional faces through \hyperref[threefacesection]{Section \ref*{threefacesection}}. In this case, we determine its two and three-dimensional faces and we focus on its non 2-faces pairs and non 3-faces triples. However, in the general setting, it may be challenging to predict the faces of $\sigma_G$. We observe this more detailed in \hyperref[whynot]{Example \ref*{whynot}}.
\subsection{Deformation Theory}\label{deformationtheory}

A deformation of an affine algebraic variety $X_0$ is a flat morphism $\pi \colon \mathcal{X} \longrightarrow S$ with $0 \in S$ such that $\pi^{-1}(0) = X_0$, i.e.\  we have the following commutative diagram.
\begin{center}
\begin{tikzpicture}[every node/.style={midway}]
\matrix[column sep={4em,between origins},
        row sep={2em}] at (0,0)
{ \node(X0)   {$X_0$}  ; & \node(X) {$\mathcal{X}$}; \\
  \node(0) {$0$};&      \node(S) {$S$};             \\};
\draw[->] (X0) -- (0) node[anchor=east]  {$$};
\draw[->] (X) -- (S) node[anchor=west]  {$\pi$};
\draw[right hook->] (X0) -- (X) node[anchor=north]  {$$};
\draw[right hook->] (0)   -- (S) node[anchor=south] {$$};
\end{tikzpicture}
\end{center}

\noindent The variety $\mathcal{X}$ is called the total space and $S$ is called the base space of the deformation. Let $\pi \colon \mathcal{X} \longrightarrow S $ and $\pi' \colon \mathcal{X}' \longrightarrow S$ be two deformations of $X_0$. We say that two deformations are isomorphic if there exists a map $\phi: \mathcal{X} \longrightarrow \mathcal{X}'$ over $S$ inducing the identity on $X_0$. Let $A$ be an Artin ring where $S=\Spec(A)$. For an affine algebraic variety $X_0$, one has a contravariant functor $\Def_{X_0}$ such that $\Def_{X_0} (A)$ is the set of deformations of $X_0$ over $S=\Spec(A)$ modulo isomorphisms. 
\begin{defn}
The map $\pi$ is called a \emph{first order deformation} of $X_0$ if $S=\Spec(\CC [\epsilon]/(\epsilon^2))$. We set $T^1_{X_0} := \Def_{X_0}(\CC [\epsilon] /(\epsilon^2))$.
\end{defn}

\noindent The variety $X_0$ is called rigid if $T_{X_0}^{1} = 0$. This implies that a rigid variety $X_0$ has no nontrivial infinitesimal deformations. This means that every deformation $\pi \in \Def_{X_0}(A)$ over an Artin ring $A$ is trivial i.e.\ isomorphic to the trivial deformation $X_0 \times S \longrightarrow S$. \\

\noindent From now on, let $X_0$ be an affine normal toric variety. We refer to the techniques which are developed in \cite{altmann96} in order to describe the $\CC$-vector space $T_{X_0}^1$. The deformation space $T_{X_0}^1$ is multigraded by the lattice elements of $M$, i.e.\ $T_{X_0}^1 = \bigoplus_{R\in M} T^1_{X_0}(-R)$. We first set some definitions in order to define the homogeneous part $T^1_{X_0}(-R)$. Then, we introduce the formula of $T^1_{X_0}$ for when $X_0$ is smooth in codimension 2.   \\

\noindent Let us call $R\in M$ a deformation degree and let $\sigma\subseteq N_{\QQ}$ be generated by the extremal ray generators $a_1,\ldots,a_n$. We consider the following affine space\[[R=1]:=\{a \in N_\mathbb{Q} \ | \  \langle R, a \rangle =1\} \subseteq N_{\mathbb{Q}}.\] We define the crosscut of $\sigma$ in degree $R$ as the polyhedron $Q(R):=\sigma \cap [R=1]$ in the assigned vector space $[R=0]:=\{a \in N_\mathbb{Q} \ | \  \langle R, a \rangle =0\} \subseteq N_{\mathbb{Q}}$. It has the cone of unbounded directions $Q(R)^{\infty}:= \sigma \cap [R=0]$ and the minimal convex compact part $Q(R)^{c}$ such that $Q(R)^{c} + Q(R)^{\infty} = Q(R)$ holds.  The minimal convex compact part $Q(R)^{c}$ is generated by the vertices $\overline{a_i} := a_i /\langle R, a_i \rangle$ where $\langle R, a_i \rangle \geq 1$. An useful observation is that $\overline{a_i}$ is a lattice vertex in $Q(R)$ if $\langle R, a_i \rangle =1$. Note that in the next definition, there is a choice of an orientation included in regarding edges in $Q(R)$ as elements of $R^{\bot}$.

\begin{defn}
\noindent
\begin{itemize}
\item[(i)] Let $d^1,\hdots,d^N \in R^{\bot} \subset N_{\QQ}$ be the compact edges of $Q(R)$. The vector $\bar{\epsilon} \in \{0, \pm 1\}^{N}$ is called a sign vector assigned to each two-dimensional compact face $\epsilon$ of $Q(R)$ defined as  
\[
    \overline{\epsilon_i}=\left\{
                \begin{array}{ll}
                  \pm 1, \ \text{if} \ d^i \ \text{is an edge of} \ \epsilon\\
                  0\\
                \end{array}
              \right.  \]

\noindent such that $\sum_{i \in [N]} \overline{\epsilon_i} d^i=0$, i.e the oriented edges $\overline{\epsilon_i} d^i$ form a cycle along the edges of the face $\epsilon \preceq Q(R)$. We choose one of both possibilities for the sign of $\overline{\epsilon}$.\\
\item[(ii)] For every deformation degree $R \in M$, the related vector space $V(R)$ is defined as $$V(R)=\{\overline{t}=(t_1,\hdots,t_N)\in \CC^{N} | \sum_{i \in [N]} t_i \overline{\epsilon_i} d^i = 0, \text{ for every compact 2-face }\epsilon \preceq Q(R)\}.$$
\end{itemize}
\end{defn}

\begin{ex}\label{hellodef}
Let us consider the cone over a double pyramid $P$ over a triangle in $N \cong \ZZ^4$ with extremal ray generators $a_1=(0,1,0,1)$, $a_2=(1,0,0,1)$, $a_3=(-1,-1,0,1)$, $a_4=(0,0,1,1)$ and $a_5=(0,0,-1,1)$.  For the deformation degree $R_1=[1,1,1,1] \in M$, we obtain the compact part $Q(R_1)^{c}$ as a two-dimensional face $\epsilon$ generated by $\overline{a_1}$, $\overline{a_2}$, and $\overline{a_4}$. We choose the sign vector $\bar{\epsilon}=(1,1,1)$ to this two-dimensional face and we obtain the elements of $V(R_1)$ as $\bar{t}=(t,t,t)$.  Now let us consider the deformation degree $R_2 =[0,1,0,1] \in M$. Then $Q(R_1)^{c}$ consists of two two-dimensional faces $\epsilon$ and $\epsilon'=\conv(\overline{a_1},\overline{a_2},\overline{a_5})$ where $\overline{\overline{a_1},\overline{a_2}}$ is a common edge of both $\epsilon$ and $\epsilon'$. Up to a fixed labelling of the compact edges, we choose the sign vectors as $\overline{\epsilon} =(1,1,1,0,0)$ and $\overline{\epsilon'} =(0,0,1,1,1)$. Hence we obtain that $V(R_2)$ is a one-dimensional vector space generated by $(1,1,1,1,1)$.
\begin{center}
\begin{tikzpicture}[baseline=1,scale=0.7,every path/.style={>=latex},every node/.style={draw,circle,fill=white,scale=0.7}]
  \node            (a) at (0,0)  {$\overline{a_1}$};
  \node            (b) at (2,4)  {$\overline{a_4}$};
  \node            (c) at (4,0) {$\overline{a_2}$};
    \draw[->] (a) edge (b);
    \draw[<-] (a) edge (c);
    \draw[<-] (c) edge (b);
   
  \node      [draw=none,fill=none,scale=1.5]      (a) at (2,-0.5)  {$t$};
  \node       [draw=none,fill=none,scale=1.5]      (b) at (0.5,2)  {$t$};
  \node        [draw=none,fill=none,scale=1.5]     (c) at (3.5,2) {$t$};
 
\path[draw, fill=green!50] (0.2,0.12)--(2,3.8)--(3.8,0.12)--cycle;

\end{tikzpicture}
\hspace{3cm}
\begin{tikzpicture}[baseline=1,scale=0.4,every path/.style={>=latex},every node/.style={draw,circle,fill=white,scale=0.7}]
  \node            (a) at (0,4)  {$\overline{a_1}$};
  \node            (b) at (2,8)  {$\overline{a_4}$};
    \node            (d) at (2,0)  {$\overline{a_5}$};
  \node            (c) at (4,4) {$\overline{a_2}$};
    \draw[->] (a) edge (b);
    \draw[<-] (a) edge (c);
    \draw[<-] (c) edge (b);
       \draw[<-] (c) edge (d);
          \draw[->] (a) edge (d);
   
  \node      [draw=none,fill=none,scale=1.5]      (a) at (2,3.95)  {$t$};
  \node       [draw=none,fill=none,scale=1.5]      (b) at (0.5,2)  {$t$};
  \node        [draw=none,fill=none,scale=1.5]     (c) at (3.5,2) {$t$};

  \node       [draw=none,fill=none, scale=1.5]      (b) at (0.5,6)  {$t$};
  \node        [draw=none,fill=none,scale=1.5]     (c) at (3.5,6) {$t$};
 
\path[draw, fill=green!50] (0.4,4.32)--(2,7.5)--(3.6,4.32)--cycle;
\path[draw, fill=cyan] (0.4,3.6)--(2,0.6)--(3.6,3.6)--cycle;

\end{tikzpicture}

\end{center}


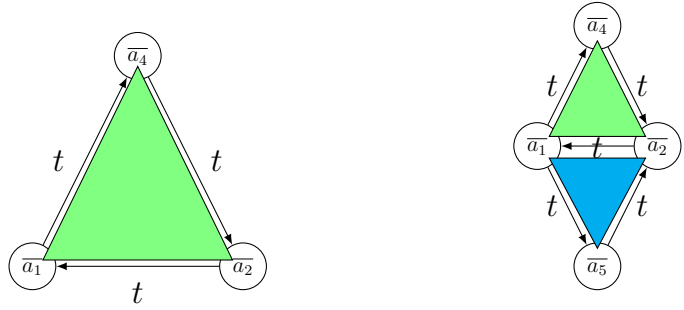
\captionof{figure}{The compact part of the crosscut $Q(R_i)$ and the vector space $V(R_i)$.}

\end{ex}

\begin{thm}[Corollary 2.7, \cite{altmann96}]\label{altmann96}
If the affine normal toric variety $X_0$ is smooth in codimension 2, then $T^1_{X_0}(-R)$ is contained in $V(R)/\mathbb{C}(1,\ldots,1)$. Moreover, it is built by those vectors $\bar{t}$  satisfying $t_{ij} = t_{jk}$ where $\overline{a_j}$ is a non-lattice common vertex in $Q(R)$ of the edges $d^{ij}=\overline{\overline{a_i} \ \overline{a_j}}$ and $d^{jk}=\overline{\overline{a_j} \ \overline{a_k}}$. Thus, $T^1_{X_0}(-R)$ equals the set of equivalence classes of those Minkowski summands of $\RR_{\geq 0}. Q(R)$ that preserve up to homothety the stars of non-lattice vertices of $Q(R)$.
\end{thm}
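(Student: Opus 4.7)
The plan is to translate the deformation problem into combinatorics on the crosscut $Q(R)$ and then specialize using the smoothness hypothesis. Starting from a presentation $\CC[\sigma^\vee \cap M] \cong \CC[E]/I$, where $E$ is the Hilbert basis of the semigroup $\sigma^\vee \cap M$ and $I$ is its binomial ideal of relations, $T^1_{X_0}$ fits into the standard cotangent exact sequence and inherits the $M$-grading from the torus action, splitting as $\bigoplus_{R \in M} T^1_{X_0}(-R)$. Concretely, each piece $T^1_{X_0}(-R)$ is built from those perturbations of the generators of $I$ that sit in degree $-R$, modulo Jacobian (trivial) perturbations. The key to the whole proof is to give a concrete geometric model for these perturbations on $Q(R)$.

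The geometric content is that such a perturbation corresponds to moving the extremal ray generators $a_i$ within their spans, which in the hyperplane $[R=1]$ translates into scaling the compact edges $d^i$ of the crosscut by scalars $t_i$. For the perturbed boundary of each compact two-face $\epsilon$ of $Q(R)$ to remain closed, the scalars must satisfy the cycle condition $\sum_{i \in [N]} \bar{\epsilon}_i \, t_i\, d^i = 0$, which is precisely the defining equations of $V(R)$. The trivial deformations correspond to uniformly dilating all edges of $Q(R)$ simultaneously, that is, the line $\CC(1,\ldots,1)$; this produces the embedding $T^1_{X_0}(-R) \hookrightarrow V(R)/\CC(1,\ldots,1)$.

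The assumption that $X_0$ is smooth in codimension two guarantees that every two-dimensional face of $\sigma$ is smooth, which makes the corresponding local binomial relations a regular sequence and thus removes the obstructions that could prevent lifting cycles in $V(R)$ to honest deformations. At a non-lattice vertex $\overline{a_j}$, the first lattice multiple of $a_j$ lies strictly above $[R=1]$, so the two adjacent binomial relations share a common generator that must be perturbed coherently; this forces $t_{ij} = t_{jk}$ for any two compact edges $d^{ij}, d^{jk}$ of $Q(R)$ meeting at $\overline{a_j}$, and is equivalent to requiring that the star of $\overline{a_j}$ be preserved up to homothety. At a lattice vertex $\overline{a_j} = a_j$ no such identification is imposed, since the corresponding generator itself can be shifted independently along each adjacent edge. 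Assembling all of this gives the identification of $T^1_{X_0}(-R)$ with the space of equivalence classes of Minkowski summands of $\RR_{\geq 0} \cdot Q(R)$ preserving the stars of non-lattice vertices up to homothety.

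The main obstacle is making the passage from algebraic perturbations of the binomial relations to the combinatorial perturbations of $Q(R)$ fully rigorous; in particular, one must verify that the smoothness in codimension two really does kill the relevant obstruction classes, and that the lattice/non-lattice dichotomy at vertices is precisely what governs which edge-scalar equalities are forced. Once these two delicate points are settled, the cycle conditions and the star-preservation conditions assemble into the stated isomorphism essentially by a direct bookkeeping argument.
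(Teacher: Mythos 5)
This statement is cited in the paper as \emph{Corollary 2.7 of \cite{altmann96}} and is used as a black-box tool; the paper gives no proof of it, so there is nothing in-paper to compare your attempt against. The intended reference is Altmann's work on one-parameter families of toric singularities, and a reader is expected to consult that source.

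As a reconstruction of Altmann's argument, your sketch correctly identifies the conceptual skeleton: the $M$-grading of $T^1$ coming from the torus action, the interpretation of a degree-$(-R)$ perturbation as a rescaling of the compact edges of $Q(R)$ subject to the cycle-closure constraints that define $V(R)$, the quotient by $\CC(1,\dots,1)$ as the trivial (homothety) deformation, and the role of non-lattice vertices in forcing the identifications $t_{ij}=t_{jk}$. You also correctly read the Minkowski-summand statement as a geometric repackaging of these edge-scaling data. However, you explicitly defer the two hardest points --- that smoothness in codimension two kills the relevant obstructions, and that the lattice/non-lattice dichotomy exactly governs which edge equalities are forced --- and these are precisely where Altmann's proof does nontrivial work (he computes $T^1$ via an explicit complex built from the two-dimensional faces of $\sigma$ and their overlaps, rather than by directly perturbing the binomial generators of the edge ideal). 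Your description "the first lattice multiple of $a_j$ lies strictly above $[R=1]$" is an informal paraphrase of $\langle R,a_j\rangle\geq 2$ and is not, by itself, what drives the vertex condition; it is the structure of the pairing on adjacent two-faces that does. In short: your outline is a faithful summary of the idea, but it is not a proof, and in any case the paper you are reading treats the theorem as imported rather than proved.
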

\noindent Here, a polyhedron $P$ is called a Minkowski summand of $\RR_{\geq 0}.Q(R)$ if there is a $P'$ such that $\RR_{\geq 0} Q(R) = P + P'$, where $P$ and $P'$ have the same cone of unbounded directions. The star of a vertex $\overline{v} \in Q(R)$ is defined as the set of faces having $v$ as a face.  In general, if the toric variety $X_0$ is not smooth in codimension 2, then the homogeneous piece $T^1_{X_0}(-R)$ consists of elements of $V(R) \oplus W(R)/ \mathbb{C}(\underline{1},\underline{1})$ satisfying certain conditions (\cite{altmann96}, Theorem 2.7). Here the vector space $W(R)$ is equal to $\RR^{\# (\text{non-lattice vertices of Q(R)})}$.

\begin{ex}Let us consider the cone $\sigma \subseteq N_{\QQ}$ over $P$ as in \hyperref[hellodef]{Example \ref*{hellodef}}. The two dimensional faces of $\sigma$ are all the pairs of generating rays except $\{a_4,a_5\}$. These are all smooth and hence $\TV(\sigma)$ is smooth in codimension 2. For the deformation degrees $R_1$ and $R_2$, by \hyperref[altmann96]{Theorem \ref*{altmann96}}, we obtain that $T_{\TV(\sigma)}(R_i)=0$ for $i=1,2$. Note that the three dimensional faces of $\sigma$ are all simplicial by construction. In particular one observes that the only cross-cut $Q^{c}(R)$ with $T^1_{\TV(\sigma)(-R)} \neq 0$ consists of compact edges $\overline{\overline a_i,\overline{a_4}}$ and $\overline{\overline{a_i},\overline{a_5}}$ for $i=1,2,3$ and $\overline{a_i}$ is a lattice vertex. Since no such $R\in M$ exists, $\TV(\sigma)$ is a rigid toric variety.
\end{ex}

\begin{rem}\label{garipolay}
Let us consider the cone $\sigma'$ generated by $b_1 = (1,0,0,1)$, $b_2=(1,1,1,-1)$, $b_3=(0,1,0,0)$, $b_4=(0,0,1,0)$ and $b_5=(1,0,0,0)$. It is combinatorially equivalent to $\sigma$ from \hyperref[hellodef]{Example \ref*{hellodef}}, i.e.\ their face lattices are isomorphic. However for the deformation degree $R=[1,0,0,0]$,  $Q(R)^{c}$ consists of compact edges $\overline{\overline{b_1},\overline{b_5}}$ and $\overline{\overline{b_2},\overline{b_5}}$ where $\overline{b_5}$ is a lattice vertex. Hence $T^1_{\TV(\sigma')}(-R) \neq 0$ and $\TV(\sigma')$ is not rigid. Therefore, we emphasize the importance of calculating $Q(R)$ for each deformation degree $R \in M$ than just its combinatorial structure.
\end{rem}

\begin{rem} \label{prooftechnique}
The following two cases in \hyperref[transferringexplained]{Figure \ref*{transferringexplained}} will appear often while we study rigid toric varieties. The first figure was in particular studied in \hyperref[hellodef]{Example \ref*{hellodef}}. For the second figure let $\epsilon^1, \epsilon^2 \preceq Q(R)$ be the compact 2-faces connected by the vertex $\overline{a}$. As in the previous case we obtain that $t_1 = t_2 = t_3$ and $t_4 = t_5 = t_6$. By \hyperref[altmann96]{Theorem \ref*{altmann96}}, if $\overline{a}$ is a non-lattice vertex, then we obtain that $t_3 = t_4$. We note also that there are pairs of vertices of $Q(R)$ where their convex hull is not contained in $Q(R)$. This implies that the corresponding pair of extremal rays do not form a two dimensional face. We call these pairs of extremal rays  \emph{non 2-faces} and we focus on these in the proofs for rigidity.  \\
\begin{center}
\begin{tikzpicture}[baseline=1cm,scale=0.7,every path/.style={>=latex},every node/.style={draw,circle,fill=black,scale=0.2}]
  \node      [draw=black, circle, fill=black, scale=3]      (a) at (0,0)  {};
  \node        [draw=black, circle, fill=black, scale=3]      (b) at (-3,2)  {};
%  \node            (c) at (-2,-2) {$\overline{a_j}$};
  \node       [draw=black, circle, fill=black, scale=3]        (d) at (3,2) {};
 % \node            (e) at (2,-2) {$\overline{f_1}$};
  \node        [draw=black, circle, fill=black, scale=3]       (f) at (0,4) {};

  \node      [draw=none, fill=white, scale=5]      (a1) at (-0.8,1.1)  {$d^1$};
%  \node      [draw=none, fill=white, scale=5]      (a2) at (-2.2,0)  {$t$};
 % \node      [draw=none, fill=white, scale=5]      (a3) at (-0.8,-1.1)  {$t$};
  \node      [draw=none, fill=white, scale=5]      (a4) at (1,1.1)  {$d^5$};
%  \node      [draw=none, fill=white, scale=5]      (a5) at (2.2,0)  {$t'=t$};
%  \node      [draw=none, fill=white, scale=5]      (a6) at (0.8,-1.1)  {$t'=t$};
  \node      [draw=none, fill=white, scale=5]      (a7) at (0.35,2)  {$d^3$};
  \node      [draw=none, fill=white, scale=5]      (a8) at (-1,3)  {$d^2$};
  \node      [draw=none, fill=white, scale=5]      (a9) at (1.2,2.7)  {$d^4$};

    \draw[->] (a) edge (b);

 \draw [<-] (f) edge (b);
 \draw [<-] (f) edge (d);
 \draw [->] (f) edge (a);

 %\draw [dashed,red] (b) edge (e);
 %\draw [dashed,red] (b) edge (d);

%\draw [-] (b) edge (c);
%\draw [-,green] (c) edge (d);
%\draw [-,green] (c) edge (e);
%\draw [-,green] (a) edge (c);
%\draw [-,green] (a) edge (d);

%\draw [-] (a) edge (c);
\draw [->] (a) edge (d);

%\draw [-] (a) edge (e);

%\draw [-,yellow] (a) edge (e);
%\draw [-,yellow] (0,-0.1) edge (-2.3, -1.8);
%\draw [-,yellow] (2.3,-1.8) edge (-2.3, -1.8);

%\draw [-] (d)  edge (e);

%\draw [-,cyan!40]   (c)  edge (d);
%\draw [-,cyan!40] (d)  edge (e);
%\draw [-,cyan!40] (e) edge (c);

\end{tikzpicture}
\hspace{2cm}
\begin{tikzpicture}[baseline=1,scale=0.7,every path/.style={>=latex},every node/.style={draw,circle,fill=black,scale=0.2}]
  \node      [draw=black, circle, fill=red, scale=3]      (a) at (0,0)  {$\overline{a}$};
  \node        [draw=black, circle, fill=black, scale=3]      (b) at (-2,2)  {};
  \node         [draw=black, circle, fill=black, scale=3]       (c) at (-3,-1.5) {};
  \node       [draw=black, circle, fill=black, scale=3]        (d) at (2,2) {};
  \node        [draw=black, circle, fill=black, scale=3]      (e) at (3,-1.5) {};

  \node      [draw=none, fill=none, scale=5]       (X) at (-1.5,1)  {$d^1$};
  \node        [draw=none, fill=none, scale=5]       (Y) at (-2.2,0.2)  {$d^2$};
  \node         [draw=none, fill=none, scale=5]      (Z) at (-1.5,-0.28) {$d^3$};
  \node      [draw=none, fill=none, scale=5]       (X1) at (1.3,1) {$d^4$};
  \node       [draw=none, fill=none, scale=5]       (Y1) at (2.15,0.25) {$d^5$};
  \node        [draw=none, fill=none, scale=5]     (Z1) at (1.7,-0.28) {$d^6$};

    \draw[->] (a) edge (b);

 %\draw [dashed,red] (b) edge (e);
 %\draw [dashed,red] (b) edge (d);

\draw [->] (b) edge (c);
%\draw [-,green] (c) edge (d);
%\draw [-,green] (c) edge (e);
%\draw [-,green] (a) edge (c);
%\draw [-,green] (a) edge (d);

\draw [<-] (a) edge (c);
\draw [->] (a) edge (d);

\draw [<-] (a) edge (e);

%\draw [-,yellow] (a) edge (e);
%\draw [-,yellow] (0,-0.1) edge (-2.3, -1.8);
%\draw [-,yellow] (2.3,-1.8) edge (-2.3, -1.8);

\draw [->] (d)  edge (e);

%\draw [-,cyan!40]   (c)  edge (d);
%\draw [-,cyan!40] (d)  edge (e);
%\draw [-,cyan!40] (e) edge (c);

\end{tikzpicture}

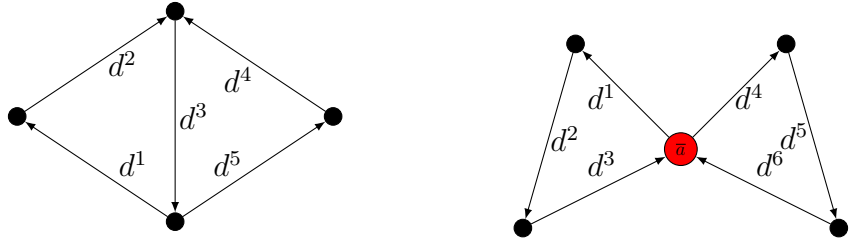
\captionof{figure}{Compact 2-faces sharing an edge or a non-lattice vertex in $Q(R)$}
\label{transferringexplained}
\end{center}

\vspace{0.5cm}

\noindent Moreover we will refer to these two cases by ``$t$ is transferred by an edge or a vertex'' during the investigation of the skeleton of $Q(R)$.
\end{rem}

\subsection{The two-dimensional faces of the edge cone}\label{twofacesection}
We investigate here all possible types of pairs of first independent sets. Our aim is to find necessary and sufficient graph theoretical conditions for the pairs of extremal rays to span a two-dimensional face of $\sigma_G$. We will also use these results to prove that $\TV(G)$ is smooth in codimension 2. We introduce the notation for the tuples of first independent sets forming $d$-dimensional faces analogously to $\Ione$ as in the following definition.

\begin{defn}
A tuple from the first independent set $\mathcal{I}_G^{(1)}$ is said to form a $d$-dimensional face, if their associated tuple of extremal ray generators of $\sigma_G$ under the map $\pi$ of \hyperref[11thm]{Theorem \ref*{11thm}} forms a $d$-dimensional face of $\sigma_G$. We denote the set of these tuples by $\mathcal{I}_G^{(d)}$.  
\end{defn}
\noindent Denote the set of $d$-dimensional faces of $\sigma_G$ as $\sigma_G^{(d)}$. Recall that we have the following one-to-one correspondence by \hyperref[facetheorem]{Theorem \ref*{facetheorem}} as follows:
\begin{eqnarray*}
\mathcal{I}_G^{(d)} &\longrightarrow &\sigma_G^{(d)} \\
(I^1, \ldots, I^{t}) &\mapsto&  (\mathfrak{i}^1,\ldots,\mathfrak{i}^{t})
\end{eqnarray*}
where $t \geq d$ and $\bigcap_{i \in [t]} \G\{I^{i}\}$ has $d+1$ connected components.\\
We label the first independent sets $\Ione$ as in three types: $A=U_1 \backslash \{a\}$, $B=U_2 \backslash \{b\}$ and the two-sided maximal independent set $C= C_1 \sqcup C_2$. 

\subsection*{The pairs of type $(A,A'), (A,B), (A,C)$}

The next proposition follows naturally by \hyperref[dimensionlemma]{Lemma \ref*{dimensionlemma}} and \hyperref[facetheorem]{Theorem \ref*{facetheorem}}.

\begin{prop}\label{aaconditions}

Let $A=U_1 \backslash \{a\}$, $A'=U_1 \backslash \{a'\}$ and $B=U_2 \backslash \{b\}$ be first independent sets.  
\begin{enumerate}
\item[\normalfont{(1)}] $(A,A') \in \Itwo$ if and only if $\G[A \cap A']$ is connected.
\item[\normalfont{(2)}] $(A,B) \in \Itwo$ if and only if $\G[A\sqcup B]$ is connected.
\end{enumerate}

\end{prop}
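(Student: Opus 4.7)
The plan is to apply \hyperref[facetheorem]{Theorem \ref*{facetheorem}} directly: a pair $(I,I')$ of first independent sets lies in $\Itwo$ precisely when $\G\{I\} \cap \G\{I'\}$ has exactly three connected components. So everything reduces to identifying the associated subgraphs $\G\{A\}$, $\G\{A'\}$, $\G\{B\}$ and counting components in the relevant pairwise intersections.

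First I would invoke \hyperref[lemonesid]{Lemma \ref*{lemonesid}}: because $A = U_1 \setminus \{a\}$ is a one-sided first independent set, it cannot be contained in any two-sided independent set, which forces $N(A) = U_2$. Consequently
\[
\G\{A\} \;=\; G[A \sqcup U_2] \sqcup \{a\},
\]
i.e.\ the spanning subgraph of $G$ in which $a$ is the unique isolated vertex and every other edge of $G$ is retained. The analogous descriptions $\G\{A'\} = G[A' \sqcup U_2] \sqcup \{a'\}$ and $\G\{B\} = G[U_1 \sqcup B] \sqcup \{b\}$ hold.

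For part (1), the intersection $\G\{A\} \cap \G\{A'\}$ keeps precisely the edges of $G$ that are incident to neither $a$ nor $a'$, so $a$ and $a'$ form two isolated components, while the remaining vertices $(A \cap A') \sqcup U_2$ carry all $G$-edges between them. Therefore the total component count is $2 + (\text{number of components of the residual bipartite graph } \G[A \cap A'])$, and it equals three exactly when that residual graph is connected, giving the biconditional in (1). Part (2) is completely analogous: $\G\{A\} \cap \G\{B\}$ has $a$ and $b$ as isolated components and the remaining component is $\G[A \sqcup B]$, so the biconditional follows directly.

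I do not anticipate any real obstacle; the argument is essentially a dictionary translation through \hyperref[facetheorem]{Theorem \ref*{facetheorem}}. The only small point to check carefully is that no additional isolated vertex slips into the intersection (for instance a vertex of $U_2$ in case (1) that happened to be adjacent in $G$ only to $a$ or $a'$); this is automatically absorbed into the stated connectedness of the residual graph, so it causes no trouble.
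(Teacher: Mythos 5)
Your proof is correct and follows the same route the paper takes, namely reducing the biconditional to a component count in $\G\{A\}\cap\G\{A'\}$ via \hyperref[facetheorem]{Theorem~\ref*{facetheorem}}; the paper simply states that the proposition ``follows naturally'' from \hyperref[dimensionlemma]{Lemma~\ref*{dimensionlemma}} and \hyperref[facetheorem]{Theorem~\ref*{facetheorem}}, while you spell out the details (including the use of \hyperref[lemonesid]{Lemma~\ref*{lemonesid}} to identify $\G\{A\}$). Your remark about possible stray isolated vertices in $U_2$ being absorbed into the connectedness condition on the residual graph is exactly the right observation.
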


\begin{prop}\label{acbcconditions}
Let $A=U_1 \backslash \{a\}$ and $C= C_1 \sqcup C_2$ be first independent sets. One obtains that $(A,C) \in \Itwo$ if and only if one of the three following conditions is satisfied.
\begin{enumerate}
\item[\normalfont{(1)}] $A \cap C_1 = \emptyset$ and $C_2 = U_2 \backslash \{\bullet\}$.
\item[\normalfont{(2)}] $C_1 \subsetneq A$ and $\G[C_2 \sqcup (N(C_2) \backslash \{a\})]$ is connected.
\item[\normalfont{(3)}] $N(C_2) \subsetneq A$ and $\G[(C_1 \backslash \{a\}) \sqcup N(C_1)]$ is connected.
\end{enumerate}
\end{prop}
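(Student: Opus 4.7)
The plan is to apply Theorem \ref{facetheorem}: the pair $(A, C)$ lies in $\Itwo$ if and only if the intersection $\G\{A\} \cap \G\{C\}$ has exactly three connected components. All three cases in the statement will emerge from tracking how the deletion of edges incident to the single vertex $a$ breaks $\G\{C\}$.

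First I would simplify $\G\{A\}$ using Lemma \ref{lemonesid}: since $A = U_1 \setminus \{a\}$ is a one-sided first independent set, it is not contained in any two-sided independent set, so $N(A) = U_2$, and therefore $\G\{A\} = \G[A \sqcup U_2] \sqcup \{a\}$. Its edges are exactly the edges of $G$ not incident to $a$, which means the edges of $\G\{A\} \cap \G\{C\}$ are precisely the edges of $\G\{C\}$ that are not incident to $a$. I would also record the key constraint: because $A$ is not contained in any two-sided independent set and $C_1 \subsetneq U_1$, the equality $C_1 = A$ would place $A$ inside the two-sided independent set $C_1 \sqcup C_2$; hence $C_1 \neq A$, so exactly one of the following holds: $a \in C_1$, or $C_1 \subsetneq A$.

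I would then split into these two cases and delete edges incident to $a$ from the decomposition $\G\{C\} = \G[C_1 \sqcup N(C_1)] \sqcup \G[C_2 \sqcup N(C_2)]$, whose pieces are both connected because $C \in \Ione$. If $a \in N(C_2)$ (equivalently $C_1 \subsetneq A$), then the deletion only affects $\G[C_2 \sqcup N(C_2)]$ and turns $a$ into an isolated vertex; the components of the intersection are $\G[C_1 \sqcup N(C_1)]$, the singleton $\{a\}$, and the components of $\G[C_2 \sqcup (N(C_2) \setminus \{a\})]$. This totals three exactly when the last graph is connected, which is condition~(2). If instead $a \in C_1$, the symmetric analysis applies to the first component of $\G\{C\}$: the resulting components are those of $\G[(C_1 \setminus \{a\}) \sqcup N(C_1)]$, the singleton $\{a\}$, together with $\G[C_2 \sqcup N(C_2)]$.

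The case $a \in C_1$ splits in a subtle but essential way, which I expect to be the main obstacle. If $|C_1| \geq 2$, i.e., $N(C_2) \subsetneq A$, then three components requires $\G[(C_1 \setminus \{a\}) \sqcup N(C_1)]$ to be connected, yielding condition~(3). If, however, $C_1 = \{a\}$, i.e., $A \cap C_1 = \emptyset$, the induced subgraph $\G[(C_1 \setminus \{a\}) \sqcup N(C_1)] = \G[N(C_1)]$ has no edges at all (its vertex set lies entirely in $U_2$) and contributes $|N(C_1)|$ isolated components; the total becomes $|N(C_1)| + 2$, and the requirement of three components forces $|N(C_1)| = 1$. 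Via the identity $C_2 = U_2 \setminus N(C_1)$ of Proposition \ref{prop3}, this is precisely $C_2 = U_2 \setminus \{\bullet\}$, giving condition~(1). The remaining step is to verify that the set-theoretic conditions $A \cap C_1 = \emptyset$, $C_1 \subsetneq A$, and $N(C_2) \subsetneq A$ stated in the proposition correspond exactly to $C_1 = \{a\}$, $a \notin C_1$, and $a \in C_1$ with $|C_1| \geq 2$, respectively.
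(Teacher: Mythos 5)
Your proof is correct and follows essentially the same approach as the paper's: reduce to counting connected components of $\G\{A\} \cap \G\{C\}$ via Theorem~\ref{facetheorem}, observe that intersecting with $\G\{A\}$ amounts to deleting edges incident to $a$ from the decomposition $\G\{C\} = \G[C_1 \sqcup N(C_1)] \sqcup \G[C_2 \sqcup N(C_2)]$, and split on whether $a \in C_1$. The only difference is presentational — you order the casework by the location of $a$ and then subdivide the $a \in C_1$ branch, while the paper peels off $C_1 = \{a\}$ first — and you spell out the component count in case~(1) more explicitly, which is a useful clarification.
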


\begin{proof}

Assume $A \cap C_1 = \emptyset$, i.e.\ $C_1 = \{a\}$. Then the graph $\G\{A\} \cap \G\{C\}$ has the isolated vertex set $C_1 \sqcup N(C_1)$. In this case, $(A,C) \in \Itwo$ if and only if $C_2 = U_2 \backslash \{b\}$ for some vertex $b \in U_2$. This implies in particular that $U_2 \backslash \{b\} \notin \Ione$. Now let us consider the case where $A \cap C_1 \neq \emptyset$. Since $A=U_1 \backslash \{a\}$, it is either $C_1 \subsetneq A$ or $N(C_2) \subsetneq A$. We now prove (2), the case $(3)$ follows symmetrically. We require the intersection subgraph $\G[A] \cap \G[C]$ to have three connected components. Since it consists of $\G[C_1] \sqcup (\G[A] \cap \G[C_2])$, and $a$ is an isolated vertex, $\G[C_2 \sqcup (N(C_2) \backslash \{a\})]$ must be connected.  
\end{proof}

\begin{ex}\label{exampleabc}
Let us consider the bipartite graph $G \subset K_{5,4}$ as in \hyperref[figure51]{Figure \ref*{figure51}}. We observe the existence of two two-sided first independent sets $C = \{3\} \sqcup \{6,7\}$ and $C' = \{1,2\} \sqcup \{8,9\}$. Let $A= U_1 \backslash \{4\}$ and $A'= U_1 \backslash \{5\}$. Since $\G[A \cap A']$ has two connected components, $(A,A') \notin \Itwo$. In particular, we obtain that $(A,A',C,C') \in \Ithree$. Since we have that $A \cap C_1 = \emptyset$ and $C_2 = U_2 \backslash \{8,9\}$, $(A,C) \notin \Itwo$. On the other hand $(A',C) \in \Itwo$, since $\{3\} \subset A'$ and the induced subgraph $\G[\{6,7\} \sqcup \{1,2,5\}]$ is connected.

\begin{center}
\begin{tikzpicture}[baseline=1cm,scale=1.5,every path/.style={>=latex},every node/.style={draw,circle,fill=white,scale=0.6}]
  \node            (a2) at (0,1.5)  {2};
  \node            (a3) at (0,1)  {3};
  \node            (a4) at (0,0.5)  {4};
  \node            (a5) at (0,0)  {5};
 %\node      [draw=none, fill=none, scale =1.5]       (a1) at (2.3, 0.75) {$G_*[A_i] \cong a_i$:};
\node       [rectangle]     (b8) at (1.5,0.5)  {9};
\node       [rectangle]     (b7) at (1.5,1)  {8}; 
 \node       [rectangle]     (b6) at (1.5,1.5)  {7};
  \node            (a1) at (0,2) {1};
  \node       [rectangle]     (b5) at (1.5,2) {6};
\node [draw =none, fill=none,scale=1.6](e) at (0.75, -0.4) {$G$};

    \draw[-] (a1) edge (b5);
  \draw[-] (a1) edge (b6);
  \draw[-] (a2) edge (b5);
  \draw[-] (a2) edge (b6);
  
    \draw[-] (a3) edge (b7);
    \draw[-] (a3) edge (b8);

    \draw[-] (a4) edge (b6);
    \draw[-] (a4) edge (b7);
    \draw[-] (a4) edge (b8);
    \draw[-] (a4) edge (b5);
    \draw[-] (a5) edge (b6);
    \draw[-] (a5) edge (b7);
    \draw[-] (a5) edge (b8);
    \draw[-] (a5) edge (b5);
 
\end{tikzpicture}
\hspace{0.8cm}
\begin{tikzpicture}[baseline=1cm,scale=1.5,every path/.style={>=latex},every node/.style={draw,circle,fill=white,scale=0.6}]
  \node         [draw=cyan]   (a2) at (0,1.5)  {2};
  \node       [fill=yellow]     (a3) at (0,1)  {3};
  \node          [draw=cyan]  (a4) at (0,0.5)  {4};
  \node          [draw=cyan]  (a5) at (0,0)  {5};
 %\node      [draw=none, fill=none, scale =1.5]       (a1) at (2.3, 0.75) {$G_*[A_i] \cong a_i$:};
\node       [rectangle]     (b8) at (1.5,0.5)  {9};
\node       [rectangle]     (b7) at (1.5,1)  {8}; 
 \node       [fill=yellow,rectangle,draw=cyan]     (b6) at (1.5,1.5)  {7};
  \node         [draw=cyan]   (a1) at (0,2) {1};
  \node       [fill=yellow,rectangle,draw=cyan]     (b5) at (1.5,2) {6};
\node [draw =none, fill=none,scale=1.6](e) at (0.75, -0.4) {$\G\{C\}$};

    \draw[-,cyan] (a1) edge (b5);
  \draw[-,cyan] (a1) edge (b6);
  \draw[-,cyan] (a2) edge (b5);
  \draw[-,cyan] (a2) edge (b6);
  
    \draw[-] (a3) edge (b7);
    \draw[-] (a3) edge (b8);

    \draw[-,cyan] (a4) edge (b6);

    \draw[-,cyan] (a4) edge (b5);
    \draw[-,cyan] (a5) edge (b6);

    \draw[-,cyan] (a5) edge (b5);
 
\end{tikzpicture}
\hspace{0.8cm}
\begin{tikzpicture}[baseline=1cm,scale=1.5,every path/.style={>=latex},every node/.style={draw,circle,fill=white,scale=0.6}]
  \node        [fill=yellow]      (a2) at (0,1.5)  {2};
  \node         [draw=cyan]   (a3) at (0,1)  {3};
  \node      [draw=cyan]      (a4) at (0,0.5)  {4};
  \node     [draw=cyan]      (a5) at (0,0)  {5};
 %\node      [draw=none, fill=none, scale =1.5]       (a1) at (2.3, 0.75) {$G_*[A_i] \cong a_i$:};
\node       [rectangle,fill=yellow,draw=cyan]     (b8) at (1.5,0.5)  {9};
\node       [rectangle,fill=yellow,draw=cyan]     (b7) at (1.5,1)  {8}; 
 \node       [rectangle]     (b6) at (1.5,1.5)  {7};
  \node    [fill=yellow]        (a1) at (0,2) {1};
  \node       [rectangle]     (b5) at (1.5,2) {6};
\node [draw =none, fill=none,scale=1.6](e) at (0.75, -0.4) {$\G\{C'\}$};

    \draw[-] (a1) edge (b5);
  \draw[-] (a1) edge (b6);
  \draw[-] (a2) edge (b5);
  \draw[-] (a2) edge (b6);
  
    \draw[-,cyan] (a3) edge (b7);
    \draw[-,cyan] (a3) edge (b8);

    \draw[-,cyan] (a4) edge (b8);
    \draw[-,cyan] (a4) edge (b7);

    \draw[-,cyan] (a5) edge (b8);
    \draw[-,cyan] (a5) edge (b7);
 
\end{tikzpicture}
\hspace{0.8cm}
\begin{tikzpicture}[baseline=1cm,scale=1.5,every path/.style={>=latex},every node/.style={draw,circle,fill=white,scale=0.6}]
  \node       [fill=yellow]      (a2) at (0,1.5)  {2};
  \node       [fill=yellow]      (a3) at (0,1)  {3};
  \node            (a4) at (0,0.5)  {4};
  \node            (a5) at (0,0)  {5};
 %\node      [draw=none, fill=none, scale =1.5]       (a1) at (2.3, 0.75) {$G_*[A_i] \cong a_i$:};
\node       [rectangle]     (b8) at (1.5,0.5)  {9};
\node       [rectangle]     (b7) at (1.5,1)  {8}; 
 \node       [rectangle]     (b6) at (1.5,1.5)  {7};
  \node          [fill=yellow]  (a1) at (0,2) {1};
  \node       [rectangle]     (b5) at (1.5,2) {6};
\node [draw =none, fill=none,scale=1.6](e) at (0.75, -0.4) {$\G\{A\cap A'\}$};

    \draw[-] (a1) edge (b5);
  \draw[-] (a1) edge (b6);
  \draw[-] (a2) edge (b5);
  \draw[-] (a2) edge (b6);
  
    \draw[-] (a3) edge (b7);
    \draw[-] (a3) edge (b8);

\end{tikzpicture}

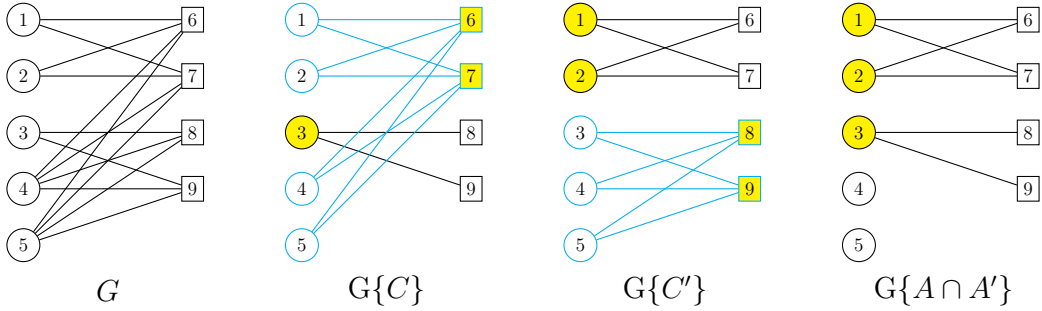
\captionof{figure}{A case where two first independent sets do not form a 2-face of $\sigma_G$.}\label{figure51}
\end{center}

\end{ex}

\subsection*{The pairs of type $(C,C')$}
We would like to consider the possible pairs of two-sided first independent sets, which we denote by $C = C_1 \sqcup C_2$ and $C' = C_1' \sqcup C_2'$.  Suppose that $C_1 \subsetneq C'_1$. Then $C_1 \sqcup C_2  \cup C'_2$ is also a two-sided independent set strictly containing $C$, unless $C'_2 \subsetneq C_2$. By the maximality condition on $C$ and $C'$, it is impossible that $C_1 = C'_1$ or $C_2=C'_2$. By the connectivity assumption on $G$, it is impossible that $C_1 \cup C'_1 = U_1$ and $C_2 \cup C'_2 =U_2$. Consequently, under the conditions where $C_1 \neq C'_1$ or $C_2 \neq C'_2$, and $C \cup C' \neq U_1 \sqcup U_2$, one obtains five types of pairs of $(C,C')$: \\\\
Type $\casei$: $C_1 \subsetneq C'_1$ and $C'_2 \subsetneq C_2$.\\
Type $\caseii$: $C_1 \cap C'_1 = \emptyset$ and $C_2 \cap C'_2=\emptyset$.\\
Type $\caseiii$: $C_1 \cap C'_1 \neq \emptyset$ and $C_2 \cap C'_2 = \emptyset$.\\
Type $\caseiv$: $C_1 \cap C'_1 = \emptyset$ and $C_2 \cap C'_2 \neq \emptyset$.\\
Type $\casev$: $C_1 \cap C'_1 \neq C_1 \neq C'_1 \neq \emptyset$ and $C_2 \cap C'_2 \neq C_2 \neq C'_2 \neq \emptyset$.\\

\noindent We investigate the 2-face conditions by following these types in the following Lemma.

\begin{lem}\label{ccconditions}
Let $C$ and $C'$ be two-sided first independent sets with $C=C_1 \sqcup C_2$ and $C'=C'_1 \sqcup C'_2$. Then $(C,C') \in \mathcal{I}_G^{(2)}$ if and only if it is one of the following types:
\begin{enumerate}
\item[\normalfont{(1)}] $C_1 \subsetneq C'_1$ and $C'_2 \subsetneq C_2$, where $G[(C'_1 \backslash C_1) \sqcup (C_2 \backslash C'_2)]$ is connected.
\item[\normalfont{(2)}] $C_1 \sqcup C'_1 = U_1 \backslash \{\bullet\}$ and $C_2\sqcup C'_2 = U_2$ or $C_2 \sqcup C'_2 = U_2 \backslash \{\bullet\}$ and $C_1 \sqcup C'_1 = U_1$.
\item[\normalfont{(3)}]  $C_1 \cap C'_1 \neq \emptyset$ and $C_2 \cap C'_2 = \emptyset$, where $G[C_1 \cap C'_1]$ is connected with $N(C_1 \cap C'_1)=U_2 \backslash (C_2 \sqcup C'_2)$ and $C_1 \cup C'_1 = U_1.$ 
\item[\normalfont{(4)}] $C_1 \cap C'_1 = \emptyset$ and $C_2 \cap C'_2 \neq \emptyset$, where $G[C_2 \cap C'_2]$ is connected with $N(C_2 \cap C'_2)=U_1 \backslash (C_1 \sqcup C'_1)$ and $C_2 \cup C'_2 = U_2.$ 
\end{enumerate}
\end{lem}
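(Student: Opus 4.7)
The plan is to apply \hyperref[facetheorem]{Theorem \ref*{facetheorem}}: the pair $(C,C')$ lies in $\Itwo$ if and only if the intersection spanning subgraph $\G[\{C,C'\}]=\G\{C\}\cap \G\{C'\}$ has exactly three connected components. To make the counting tractable, I will first refine the vertex partition by setting $X_1=C_1\cap C_1'$, $X_2=C_1\backslash C_1'$, $X_3=C_1'\backslash C_1$, $X_4=U_1\backslash(C_1\cup C_1')$, and analogously $Y_1,\ldots,Y_4$ in $U_2$. Since an edge of $G$ lies in $\G\{A\}$ exactly when one endpoint belongs to the independent set $A$ and the other does not, running through the sixteen bipartite combinations $X_a\times Y_b$ shows that the edges surviving in the intersection occupy only the four strips $X_1\times Y_4$, $X_2\times Y_3$, $X_3\times Y_2$, and $X_4\times Y_1$. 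Consequently the intersection decomposes as the vertex-disjoint union of the induced subgraphs $\G[X_1\sqcup Y_4]$, $\G[X_2\sqcup Y_3]$, $\G[X_3\sqcup Y_2]$, $\G[X_4\sqcup Y_1]$, and counting components reduces to counting components inside each strip.

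I then walk through the five types listed before the lemma. For Type (v) all of $X_1,X_2,X_3,Y_1,Y_2,Y_3$ are non-empty, so each of the four strips contributes at least one component (a non-empty strip is either connected or a collection of isolated vertices), giving at least four components in total; hence Type (v) never yields a 2-face. For Type (i), using \hyperref[prop3]{Proposition \ref*{prop3}} to identify the blocks, two of the strips coincide with $\G[C_1\sqcup N(C_1)]$ and $\G[C_2'\sqcup N(C_2')]$, which are connected by the first-independent-set property of $C$ and $C'$; the third strip is $\G[(C_1'\backslash C_1)\sqcup(C_2\backslash C_2')]$, and three components therefore occur precisely when this ``middle'' strip is connected, giving condition (1).

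For Type (ii) the vanishing $X_1=Y_1=\emptyset$ leaves only $\G[X_2\sqcup Y_3]$ and $\G[X_3\sqcup Y_2]$ together with isolated vertices in $X_4\cup Y_4$; when one of $X_4,Y_4$ is empty, \hyperref[prop3]{Proposition \ref*{prop3}} identifies the corresponding strip with a connected component of $\G\{C\}$ or $\G\{C'\}$, so the component count equals three precisely when $|X_4|+|Y_4|=1$, which is condition (2). Types (iii) and (iv) are symmetric; I treat (iii), where $Y_1=\emptyset$. The intersection decomposes into $\G[X_1\sqcup Y_4]$, $\G[X_2\sqcup Y_3]$, $\G[X_3\sqcup Y_2]$ and $|X_4|$ isolated vertices, so three components forces $X_4=\emptyset$ (i.e.\ $C_1\cup C_1'=U_1$) together with the connectedness of each of the three strips. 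Unpacking the connectedness of $\G[X_1\sqcup Y_4]$ and using the maximality of $C,C'$ to force $N(C_1\cap C_1')=U_2\backslash(C_2\sqcup C_2')$ then gives condition (3), with (4) dual.

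The main obstacle is Type (iii)/(iv): I must show that the two side strips $\G[X_2\sqcup Y_3]$ and $\G[X_3\sqcup Y_2]$ are automatically connected under the first-independent-set hypotheses on $C$ and $C'$ together with $X_4=\emptyset$ (so that no additional connectivity hypothesis needs to appear in the statement), and then translate the abstract strip connectivity back into the neighborhood identity recorded in the lemma. The Type (v) lower bound also needs care, since if a ``half'' of a strip is empty the other half still contributes via its isolated vertices; this is what keeps the component count at $\geq 4$ in every degenerate sub-case of Type (v).
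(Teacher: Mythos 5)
Your four strips are exactly the paper's $\bf G_1,\ldots,G_4$ (your $\G[X_1\sqcup Y_4]$ is its $\bf G_1$, your $\G[X_4\sqcup Y_1]$ is its $\bf G_2$, and so on), and your case analysis by type reproduces the paper's proof step for step; the ``one endpoint in $A$, one outside'' description of $\G\{A\}$ via \hyperref[prop3]{Proposition~\ref*{prop3}} is a cleaner way to derive the decomposition than the paper's bare assertion of it, but the route is the same. Your handling of Types (i), (ii), and (v) is correct as written.

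The genuine gap is the one you flag yourself. In Types (iii)/(iv) you must show that once $X_4=\emptyset$ (i.e.\ $C_1\cup C_1'=U_1$) and $Y_1=\emptyset$ (i.e.\ $C_2\cap C_2'=\emptyset$), the two side strips $\G[X_2\sqcup Y_3]$ and $\G[X_3\sqcup Y_2]$ are automatically connected; otherwise a third connectedness hypothesis would be needed in items~(3)/(4) and your classification would not match the statement, so the proof is incomplete without this step. The argument you are missing is short: with $C_2\cap C_2'=\emptyset$ one has $Y_3=C_2'\backslash C_2=C_2'$ and $Y_2=C_2\backslash C_2'=C_2$, and with $C_1\cup C_1'=U_1$ together with \hyperref[prop3]{Proposition~\ref*{prop3}} one has $X_2=C_1\backslash C_1'=U_1\backslash C_1'=N(C_2')$ and $X_3=C_1'\backslash C_1=U_1\backslash C_1=N(C_2)$, so $\G[X_2\sqcup Y_3]=\G[C_2'\sqcup N(C_2')]$ and $\G[X_3\sqcup Y_2]=\G[C_2\sqcup N(C_2)]$, both connected precisely because $C'$ and $C$ are first independent sets. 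A smaller imprecision in your sketch: the identity $N(C_1\cap C_1')=U_2\backslash(C_2\sqcup C_2')$ in condition (3) is not a consequence of maximality alone; independence only gives the inclusion $N(C_1\cap C_1')\subseteq U_2\backslash(C_2\cup C_2')$, and the reverse inclusion is exactly what connectedness of the middle strip $\G[X_1\sqcup Y_4]$ enforces, since any $v\in Y_4$ with no neighbor in $X_1$ would be isolated there.
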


\begin{proof}
The pair $(C,C')$ forms a $2$-face of $\sigma_G$ if and only if $\G\{C\} \cap \G\{C'\}$ has three connected components. We would like to divide the proof into five types which we introduced just before the statement of this Lemma. For the related intersection subgraph $\G\{C\} \cap \G\{C'\}$, we must calculate four intersections:\\\\
$\bf {G_1} $$= \G[(C_1 \cap C'_1 )\sqcup (U_2 \backslash (C_2 \cup C'_2))]$\\
$\bf {G_2} $$= \G[(C_2 \cap C'_2 )\sqcup (U_1 \backslash (C_1 \cup C'_1))]$\\
$\bf {G_3} $$= \G[(C_1 \backslash C'_1) \sqcup (C'_2 \backslash C_2)]$\\
$\bf {G_4} $$= \G[(C'_1 \backslash C_1) \sqcup (C_2 \backslash C'_2)]$\\

\noindent And we have that $\G\{ C\} \cap \G\{C' \} = \bf G_1 \sqcup G_2 \sqcup G_3 \sqcup G_4$.\\

\noindent \underline{Type $\casei$:}  ($C_1 \subsetneq C'_1$ and $C'_2 \subsetneq C_2$). One obtains two connected subgraphs $\bf G_1$$=\G \{C_1\}$ and $\bf G_2 $$= \G \{C'_2 \}$. The graph $\bf G_3$ is empty, since $C_1 \backslash C'_1 = \emptyset$ and $C'_2 \backslash C_2 = \emptyset$. The subgraph $\bf G_4$ is not empty. Assume that $\bf G_4$ has an isolated vertex $u \in C'_1 \backslash C_1$. Then $C_1 \sqcup \{x\} \sqcup C_2$ is an independent set. This contradicts the fact that $C$ is maximal. Similarly, there exists no isolated vertex in $C_2 \backslash C'_2$ of the subgraph $\bf G_4$, otherwise $C'$ is not maximal. However it is possible that $\G[(C'_1 \backslash C_1) \sqcup (C_2 \backslash C'_2)]$ has $k\geq2$ connected components with vertex sets $X_i \subsetneq C'_1 \backslash C_1$ and $Y_i \subsetneq C_2 \backslash C'_2$, for $i \in [k]$. This means in particular that for $I \subsetneq [k]$, there exist first independent sets of form $\mathcal{C}^{I}:=(C_1 \sqcup \bigsqcup_{i \in I} X_i) \sqcup (C_2\backslash \bigsqcup_{i \in I}Y_i)$. We examine this case in \hyperref[magicalcc]{Lemma \ref*{magicalcc}}.\\

\noindent \underline{Type $\caseii$:}  ($C_1 \cap C'_1 = \emptyset$ and $C_2 \cap C'_2=\emptyset$). The subgraphs $\bf G_1$ and $\bf G_2$ are empty. Since $C'_1 \subseteq U_1 \backslash C_1 = N(C_2)$ and $C'_2 \subseteq U_2 \backslash C_2 = N(C_1)$, we obtain that $\bf G_3 $$= \G[C_1 \sqcup C_2']$ and $\bf G_4 $$ =\G[C_2 \sqcup C_1']$. Since we cannot have that $C_1 \sqcup C'_1 \neq U_1$ and $C_2 \sqcup C'_2\neq U_2$, there must exist exactly one isolated vertex $v$ such that $\G\{C\} \cap \G\{C'\} = \bf G_3 \sqcup G_4 \sqcup \{v\}$. For if not, $\G\{C\} \cap \G\{C'\}$ has more than three connected components. Let us suppose for the moment $\{v\} = U_1 \backslash (C_1 \sqcup C'_1)$. Then $\bf G_3 =$$ \G[C_1 \sqcup N(C_1)]$ and $\bf G_4 =$$ \G[C_2\sqcup N(C_2)]$ are connected and therefore $(C,C') \in \mathcal{I}_G^{(2)}$. It follows similarly if $v \in U_2\backslash (C_2 \sqcup C'_2)$. Note that in these cases, $U_i \backslash \{v\} \notin \mathcal{I}_G^{(1)}$.  \\

\noindent \underline{Type $\caseiii$:}($C_1 \cap C'_1 \neq \emptyset$ and $C_2 \cap C'_2 = \emptyset$). The subgraph $\bf G_2$ is empty. Assume that $C_1 \cup C'_1 \neq U_1$. Then the intersection subgraph $\G\{C\} \cap \G\{C'\}$ do not contain $U_1 \backslash (C_1\cup C'_1)$ as a vertex set. This implies that one must have $C_1 \cup C'_1 =U_1$ for otherwise $\G\{C\} \cap \G\{C'\}$ has at least four connected components. The subgraphs $\bf G_3$$ = \G[(C_1\backslash C'_1) \sqcup C'_2]$ and $\bf G_4$$ = \G[(C'_1 \backslash C_1) \sqcup C_2]$ are connected subgraphs.  Consequently, $(C,C') \in \mathcal{I}_G^{(2)}$ if and only if $\G[C_1 \cap C'_1 \sqcup U_2 \backslash (C_2 \sqcup C'_2)]$ is connected and $C_1 \cup C'_1 = U_1$. \underline{Type $\caseiv$} ($C_1 \cap C'_1 = \emptyset$ and $C_2 \cap C'_2 \neq \emptyset$) follows similarly to Type $\caseiii$.\\

\noindent \underline{Type $\casev$:} ($C_1 \cap C'_1 \neq C_1 \neq C'_1 \neq \emptyset$ and $C_2 \cap C'_2 \neq C_2 \neq C'_2 \neq \emptyset$). Assume that $C_1 \cup C'_1 = U_1$. Then $C_2 \cap C'_2 \neq \emptyset$ is an isolated vertex set of $G$. The same holds for the assumption $C_2 \cup C'_2 = U_2$. This means that we must have $C_1 \cup C'_1 \neq U_1$ and $C_2 \cup C'_2 \neq U_2$. But, this implies that $\G \{C\} \cap \G\{C'\}$ has at least four non-empty connected components. 
\end{proof}

\begin{ex}\label{araexample}
Let us consider the first independent sets $C = \{3\} \sqcup \{6,7\}$ and $C'=\{1,2\} \sqcup \{8,9\}$ from \hyperref[exampleabc]{Example \ref*{exampleabc}}. The pair $(C,C')$ is of Type $\caseii$. But we observe that $C_1 \sqcup C'_1 = U_1 \backslash \{4,5\}$. Hence $(C,C') \notin \Itwo$.
\end{ex}

\noindent Now, we utilize the information from \hyperref[ccconditions]{Lemma \ref*{ccconditions}} in order to give a concise proof for the next theorem.

\begin{thm}\label{smooth2co}
Let $G \subseteq K_{m,n}$ be a connected bipartite graph. Then $\TV(G)$ is smooth in codimension 2. 
\end{thm}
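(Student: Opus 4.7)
By Propositions~\ref{aaconditions} and~\ref{acbcconditions} and Lemma~\ref{ccconditions}, every $2$-face $\tau\preceq\sigma_G$ is spanned by a pair of first independent sets of one of the types $(A,A')$, $(A,B)$, $(A,C)$, or $(C,C')$, each of which falls into a short list of subcases. My plan is to verify, case by case, that the two primitive extremal ray generators of $\tau$ extend to a $\mathbb{Z}$-basis of the lattice $N=\mathbb{Z}^{m+n}/\mathbb{Z}w$, where $w=\sum_{i\in U_1}e_i-\sum_{j\in U_2}f_j$. Since every $1$-face is automatically a smooth cone, this is precisely the condition for the codimension-$2$ orbits $O_\tau$ of $\TV(G)$ to sit in the smooth locus.

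The working criterion I would use is: two primitive vectors $\mathfrak{a},\mathfrak{b}\in N$ extend to a $\mathbb{Z}$-basis if and only if the integer matrix $[\tilde{\mathfrak{a}}\mid\tilde{\mathfrak{b}}\mid w]\in\mathbb{Z}^{(m+n)\times 3}$ admits a $3\times 3$ minor of determinant $\pm 1$. Recall from the analysis around Theorem~\ref{11thm} that $\mathfrak{a}=e_a$ for $A=U_1\setminus\{a\}$, that $\mathfrak{b}=f_b$ for $B=U_2\setminus\{b\}$, and that the primitive lift for a two-sided first independent set $C=C_1\sqcup C_2$ is $\tilde{\mathfrak{c}}=\sum_{i\in C_1}e_i-\sum_{j\in N(C_1)}f_j$, with entries in $\{-1,0,+1\}$. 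For types $(A,A')$ and $(A,B)$ the primitive generators are already canonical basis vectors, so the two distinguished rows together with any third index from the opposite partition yield a triangular minor with determinant $\pm 1$. For type $(A,C)$ I would run through the three subcases of Proposition~\ref{acbcconditions}: subcase~$(1)$ forces $\tilde{\mathfrak{c}}=e_a-f_b$ and reduces to $(A,B)$, while subcases~$(2)$ and~$(3)$ are dispatched by choosing the rows indexed by $a$, by an element of $C_1$, and by an element of $C_2$.

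The bulk of the work is type $(C,C')$, treated through the four subcases of Lemma~\ref{ccconditions}. Subcases~$(2)$, $(3)$, $(4)$ yield triangular minors for natural row triples: one row indexed by the distinguished vertex (or by a vertex in an intersection forced to be nonempty by the hypothesis), together with one vertex from $C_2$ and one from $C_2'$. The delicate step, which I view as the main obstacle, is subcase~$(1)$, the nested situation $C_1\subsetneq C_1'$ and $C_2'\subsetneq C_2$: here many natural row triples produce linearly dependent columns because $\tilde{\mathfrak{c}}$ and $\tilde{\mathfrak{c}}'$ agree on $C_1$ and on $N(C_1)$. The correct selection turns out to be one row in $C_1$, one row in $C_2\setminus C_2'$ (nonempty by the very assumption of subcase~$(1)$), and one row in $C_2'$: the resulting $3\times 3$ matrix is upper triangular with nonzero diagonal entries $\pm 1$, so its determinant is $\pm 1$. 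Once every subcase has been checked, every $2$-face of $\sigma_G$ is a smooth cone, and the theorem follows.
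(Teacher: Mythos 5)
Your proof is correct and follows essentially the same route as the paper's: both run case by case over the pair types classified in Propositions~\ref{aaconditions} and~\ref{acbcconditions} and Lemma~\ref{ccconditions}, verifying that the two generators of each $2$-face extend to a $\mathbb{Z}$-basis of $N$; your selection of three rows of $[\tilde{\mathfrak{a}}\mid\tilde{\mathfrak{b}}\mid w]$ producing a unit minor is the dual formulation of the paper's explicit exhibition of a complementary set of canonical basis vectors. One small blemish: the stated criterion should be a sufficient (``if'') condition rather than an equivalence (the precise criterion is that the $3\times 3$ minors be coprime), but this does not affect the argument since only sufficiency is used.
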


\begin{proof}
Recall that $N = \ZZ^{m+n} / \overline{(1,-1)} \cong \ZZ^{m+n-1}$. Let $A,B,C \in  \Ione$ be types of first independent sets as before. The pairs of one-sided first independent sets are the pairs of the canonical basis of $\ZZ^{m+n}$. The extremal rays of $\sigma_G$ associated to two-sided first independent sets are in form of $\mathfrak{c}=\sum_{i \in U_1 \backslash C_1} e_i - \sum_{m+j \in C_2 } f_j \in N$. Consider now the pairs of type $(A,C) \in \Itwo$. Following the conditions from \hyperref[acbcconditions]{Proposition \ref*{acbcconditions}} for any $i \in N(C_2)$ not equal to $a$ and for any $m+j \in N(C_1)$, the set $\{e_1,\ldots,\hat{e_i} ,\ldots,e_m, f_1,\ldots, \hat{f_j}, \ldots, f_{n}\}$ extends the extremal ray $\mathfrak{c}$ to a $\ZZ$-basis of $N$. Note that if $N(C_2)=\{a\}$, then $A \backslash \{a\} \notin \Ione$.

We now consider the pair of two extremal rays $\{\mathfrak{c},\mathfrak{c'}\}$ associated to two-sided first independent sets $C$ and $C'$. By \hyperref[ccconditions]{Lemma \ref*{ccconditions}}, there are four cases we should consider: 
\begin{enumerate}
\item[$(1)$] $\{e_1, \ldots , \hat{e_i}, \ldots, \hat{e_{i'}}, \ldots, e_m, f_1, \ldots, \hat{f_j}, \ldots, f_n\}$ for some $i \in C'_1 \backslash C_1$ and $i' \in N(C'_2)$, and $m+j \in N(C_1)$,
\item[$(2)$] $\{e_1, \ldots , \hat{e_i}, \ldots, e_m, f_1, \ldots, \hat{f_j}, \ldots, \hat{f_{j'}},\ldots,f_n\}$ for $i \in U_1\backslash(C_1 \sqcup C'_1)$ and for some $m+j \in C_2$, and $m+j' \in C'_2$,
\item[$(3)$]  $\{e_1, \ldots , \hat{e_i}, \ldots, \hat{e_{i'}}, \ldots, e_m, f_1, \ldots, \hat{f_j}, \ldots, f_n\}$ for some $i \in C_1 \backslash C'_1$ and $i' \in C'_1 \backslash C_1$, and $m+j \in N(C_1) \cap N(C'_1)$,
\item[$(4)$]  $\{e_1, \ldots , \hat{e_i}, \ldots, e_m, f_1, \ldots, \hat{f_j}, \ldots, \hat{f_{j'}},\ldots, f_n\}$ for some $i \in N(C_2) \cap N(C'_2)$ and $m+j \in C_2 \backslash C'_2$ and $m+j' \in C'_2 \backslash C_2$
\end{enumerate}
extends the pair $\{\mathfrak{c},\mathfrak{c}'\}$ to a $\ZZ$-basis of $N$.
\end{proof}

\noindent In \hyperref[garipolay]{Remark \ref*{garipolay}}, for rigidity of a toric variety $\TV(\sigma)$ we have seen that, it is not sufficient that all 3-faces of $\sigma$ are simplicial. However, from the next proposition we conclude in \hyperref[4tuplenonrigid]{Theorem \ref*{4tuplenonrigid}}, as soon as $\sigma_G$ has a non-simplicial three-dimensional face, $\TV(G)$ is not rigid. 

\begin{prop}\label{nonrigid}
Let $\TV(\sigma)$ be an affine normal toric variety. Assume that $\tau$ is a face of $\sigma$ and $\TV(\tau)$ is not rigid. Then $\TV(\sigma)$ is also not rigid.
\end{prop}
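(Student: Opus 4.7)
The plan is to use the standard toric fact that a face inclusion $\tau \preceq \sigma$ realizes $\TV(\tau)$ as a principal open affine subset of $\TV(\sigma)$, combined with the compatibility of $T^1$ with localization for affine schemes.

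First I would pick a lattice point $m \in M$ in the relative interior of the dual face $\tau^{\ast} := \tau^{\perp} \cap \sigma^{\vee}$, so that $\tau = \sigma \cap m^{\perp}$. The standard identification $\tau^{\vee} = \sigma^{\vee} + \RR_{\geq 0}(-m)$ then gives
\[
\CC[\tau^{\vee} \cap M] \;=\; \CC[\sigma^{\vee} \cap M]\bigl[\chi^{-m}\bigr],
\]
which exhibits $\TV(\tau)$ as the principal open subscheme $D(\chi^m) \subseteq \TV(\sigma)$.

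Next I would invoke the localization behavior of $T^1$: for any finitely generated $\CC$-algebra $A$ and $f \in A$, one has $T^1_{\Spec(A_f)} \simeq (T^1_{\Spec(A)})_f$, as a consequence of flat base change for the cotangent complex together with the commutativity of $\mathrm{Ext}^1$ with localization on finitely presented first arguments. Applied to $A = \CC[\sigma^{\vee} \cap M]$ and $f = \chi^m$, this yields $T^1_{\TV(\tau)} \simeq (T^1_{\TV(\sigma)})_{\chi^m}$. Since $\TV(\tau)$ is not rigid, $T^1_{\TV(\tau)} \neq 0$, and the localization of the zero module is zero, so we must have $T^1_{\TV(\sigma)} \neq 0$; thus $\TV(\sigma)$ is not rigid.

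The main (mild) obstacle is confirming the $T^1$-localization identity in the precise form needed, which is routine once finite presentation of $\CC[\sigma^\vee \cap M]$ is invoked. An alternative staying entirely inside Altmann's combinatorial framework from Section \ref{deformationtheory} is to argue via the $M$-grading: if $T^1_{\TV(\tau)}(-R_0) \neq 0$ for some $R_0 \in M$, then for $k \gg 0$ the crosscut $Q_\sigma(R_0 + k m)$ contains $Q_\tau(R_0)$ as a face of its compact part, and the nontrivial sign-vector data producing a nontrivial Minkowski-summand class of $Q_\tau(R_0)$ extends by zero on the additional compact edges to produce a nontrivial class in $T^1_{\TV(\sigma)}(-R_0 - k m)$.
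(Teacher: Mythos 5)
Your primary argument via localization of $T^1$ is correct but takes a genuinely different route from the paper. The paper works entirely inside Altmann's graded framework: picking $m\in\Relint(\tau^{*})$ and a degree $R\in M$ with $T^1_{\TV(\tau)}(-R)\neq 0$, it sets $R' := R - km$ for $k\gg 0$; since $m$ is strictly positive on every extremal ray of $\sigma$ outside $\tau$, the compact part of $Q_{\sigma}(R')$ coincides exactly with that of $Q_{\tau}(R)$, giving $T^1_{\TV(\sigma)}(-R') = T^1_{\TV(\tau)}(-R)\neq 0$. Your localization argument avoids the crosscut machinery altogether, instead realizing $\TV(\tau)$ as the principal open $D(\chi^{m})\subset\TV(\sigma)$ and invoking $T^1_{\Spec A_f}\cong (T^1_{\Spec A})_f$. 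That is cleaner and more general (it applies to any affine variety and any open face, with no toric combinatorics), at the cost of importing a localization property of $T^1$ that the paper never otherwise needs; the paper's proof is lower-tech and stays inside the toolkit it is already running.

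Your combinatorial alternative, however, is not correct as sketched, on two counts. The shift has the wrong sign: with $R' = R_0 + km$ the degree is strictly \emph{positive} on the extremal rays outside $\tau$, so those rays contribute extra vertices to the compact part of $Q_{\sigma}(R')$ rather than being pushed into the unbounded directions. And ``extend by zero'' on the new compact edges does not in general land in $V(R')$: a new compact $2$-face meeting the face $Q_{\tau}(R_0)$ along a single old edge $d^{e}$ would impose the closing condition $t_{e}\,\overline{\epsilon}_{e}\,d^{e}=0$, forcing $t_{e}=0$ and potentially collapsing the class. The paper's choice $R'=R_0-km$ sidesteps both issues, since then $Q_{\sigma}(R')^{c}=Q_{\tau}(R_0)^{c}$ and nothing needs extending.
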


\begin{proof}
Let $m \in \sigma^{\vee}$ and let $\tau = H_m \cap \sigma$ be a face of $\sigma$. Since $\TV(\tau)$ is not rigid, there exists a deformation degree $R \in M$ such that $T_{\TV(\tau)}^1(-R)\neq 0$. Let us set another deformation degree $R'= R- k.m \in M$ for some positive integer $k\gg0$. Since $-m \in R$ evaluates negative on $\sigma \backslash \tau$, we obtain that the compact part of $Q(R')$ consists of the face $\tau$. Therefore $T_{\TV(\sigma)}^1(-R') = T_{\TV(\tau)}^1(-R) \neq 0$. 
\end{proof}
%. For this, we study the cases where a pair of first independent sets do not form a two-dimensional face although they are contained in a three-dimensional face. 
\subsection{The three-dimensional faces of the edge cone}\label{threefacesection}

Since the toric variety $\TV(G)$ is smooth in codimension 2, we can now apply \hyperref[altmann96]{Theorem \ref*{altmann96}} to pursue our investigation on the rigidity of $\TV(G)$. Also with the motivation of \hyperref[nonrigid]{Proposition \ref*{nonrigid}} we first investigate the non-simplicial three-dimensional faces $\tau \preceq\sigma_G$. There exists a pair of extremal ray generators of $\tau$ which does not form a two-dimensional face. Therefore, we treat the pairs of first independent sets which do not form a two-dimensional face and which are contained in the set of extremal ray generators of a three dimensional face. By using \hyperref[tatli]{Corollary \ref*{tatli}} and the 2-face conditions from \hyperref[twofacesection]{Section \ref*{twofacesection}}, we conclude that non-simplicial three-dimensional faces of $\sigma_G$ are generated exactly by four extremal ray generators in \hyperref[existenceofadeg]{Theorem \ref*{existenceofadeg}}. In addition in \hyperref[magicalcc]{Lemma \ref*{magicalcc}} we prove that if the pair of first independent sets of Type (i) does not form a 2-face, then the associated toric variety is not rigid.

\begin{lem}\label{3faceAA}
Let $A = U_1 \backslash \{a\}\in \mathcal{I}_G^{(1)}$ and $A' = U_1 \backslash \{a'\}\in \mathcal{I}_G^{(1)}$. Assume that $\{\mathfrak{a},\mathfrak{a'}\}$ forms part of the extremal ray generators of a three-dimensional face of $\sigma_G$. 
\begin{enumerate}
\item[\normalfont{(1)}] If $(A,A') \in \Itwo$, then the three-dimensional face is simplicial. 
\item[\normalfont{(2)}] If $(A,A') \notin \Itwo$, then either
\begin{enumerate}
\item[\normalfont (i)] $(A,A',B,C) \in \Ithree$, where $B=U_2 \backslash \{b\} \in \Ione$ and $C= (A\cap A') \sqcup \{b\} \in \Ione$ or
\item[\normalfont (ii)] $(A,A',C,C')\in \Ithree$, where $C_1 \sqcup C'_1 = A \cap A'$ and $C_2 \sqcup C'_2 =U_2$. 
\end{enumerate}
\end{enumerate}
\end{lem}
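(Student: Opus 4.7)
The plan is to interpret the 3-face conditions graph-theoretically via Theorem \ref{facetheorem}, studying the connected components of the intersection subgraph $\G[\{A, A'\}] = \G\{A\} \cap \G\{A'\}$. Since $A, A' \in \Ione$, Lemma \ref{lemonesid} forces $N(A) = N(A') = U_2$, so $\G\{A\} = G[(U_1 \setminus \{a\}) \sqcup U_2] \sqcup \{a\}$ with the large piece connected, and likewise for $A'$. Consequently $\G[\{A, A'\}]$ is the disjoint union of the isolated vertices $\{a\}, \{a'\}$ with the connected components of $M := G[(U_1 \setminus \{a, a'\}) \sqcup U_2]$. Any 3-face $\tau$ containing $\mathfrak{a}, \mathfrak{a}'$ corresponds via Theorem \ref{facetheorem} to a subset $S'' \supseteq \{A, A'\}$ with $\G[S'']$ having exactly four connected components; since adding generators only refines components and $\{a\}, \{a'\}$ are already singletons, the refinement must act inside $M$.

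For (1), assume $(A, A') \in \Itwo$, so $M$ is connected. Each additional $I \in S'' \setminus \{A, A'\}$ must induce the same bipartition $\{M_1, M_2\}$ of $V(M)$. Classifying $I$ by type, a one-sided $U_1 \setminus \{a''\}$ produces the bipartition with singleton $\{a''\}$, a one-sided $U_2 \setminus \{b\}$ produces one with singleton $\{b\}$, and a two-sided $C_1 \sqcup C_2$ produces $(C_1 \setminus \{a, a'\}) \sqcup (U_2 \setminus C_2)$ versus $C_2 \sqcup ((U_1 \setminus C_1) \setminus \{a, a'\})$. Different choices within or across the two one-sided types produce different singletons, and a cross-type coincidence between a one-sided $U_2 \setminus \{b\}$ and a two-sided $C$ with partition singleton $\{b\}$ would force $N(a) \subseteq \{b\}$, contradicting $U_2 \setminus \{b\} \in \Ione$. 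For two distinct two-sided $C \neq C'$ to produce the same bipartition, either $C_2 = C'_2$ with $C_1 \triangle C'_1 \subseteq \{a, a'\}$, contradicting the maximality identity $N(C_2) = U_1 \setminus C_1$ of Proposition \ref{prop3}, or one needs a swapped labelling, which by the connectedness of $M$ leaves all $M$-edges in $\G\{C'\} \cap M$ and hence yields only one component. Thus $|S''| = 3$, and $\tau$ is simplicial.

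For (2), assume $(A, A') \notin \Itwo$, so $M$ is disconnected; since refining cannot merge components, $M$ must have exactly two components $M_1, M_2$. I split on whether one of them is a single vertex. In sub-case (i) with $M_1 = \{b\} \subset U_2$, we have $N(b) \subseteq \{a, a'\}$; combining with $A, A' \in \Ione$ (which forces $G - a$ and $G - a'$ to remain connected) and the connectedness of $G$, one deduces $N(b) = \{a, a'\}$ and $N(a), N(a') \not\subseteq \{b\}$. These consequences make $B := U_2 \setminus \{b\} \in \Ione$ and $C := (A \cap A') \sqcup \{b\} \in \Ione$ (the latter with $\G\{C\}$ splitting into $M_2$ and the path through $a, b, a'$), and a direct edge check gives $\G\{B\}, \G\{C\} \supseteq \G[\{A, A'\}]$, so $(A, A', B, C) \in \Ithree$. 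In sub-case (ii), both $M_i$ have at least two vertices, yielding non-empty bipartitions $M_i = X_i \sqcup Y_i$ with $X_i \subseteq U_1 \setminus \{a, a'\}$ and $Y_i \subseteq U_2$. Setting $C := X_1 \sqcup Y_2$, $C' := X_2 \sqcup Y_1$, independence follows from the absence of $M$-edges between $M_1$ and $M_2$; maximality $N(Y_j) = X_j \cup \{a, a'\}$ together with the connectedness of $\G[Y_j \sqcup (X_j \cup \{a, a'\})]$ follows from the connectedness of $G - a$ and $G - a'$, which forces each of $a, a'$ to have neighbours in both $Y_1$ and $Y_2$. Then $C_1 \sqcup C'_1 = X_1 \sqcup X_2 = A \cap A'$, $C_2 \sqcup C'_2 = Y_2 \sqcup Y_1 = U_2$, and $\G\{C\}, \G\{C'\} \supseteq \G[\{A, A'\}]$, so $(A, A', C, C') \in \Ithree$.

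The main obstacle I anticipate is in sub-case (2)(ii): verifying that $a$ and $a'$ each have neighbours in both $Y_1$ and $Y_2$, which is the bridge from the hypothesis $A, A' \in \Ione$ to the maximality and first-independent-set properties of $C$ and $C'$. A secondary technical point in case (1) is excluding the swapped-labelling pair of two-sided candidates that would otherwise produce the same bipartition of $V(M)$; the connectedness of $M$ resolves this by forcing every $M$-edge to remain in $\G\{C'\}$ and hence yielding only one component instead of two.
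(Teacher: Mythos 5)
Your proof is correct and follows the same overall strategy as the paper: reduce everything to the intersection $\G\{A\} \cap \G\{A'\} = \{a\} \sqcup \{a'\} \sqcup M$ with $M = G\bigl[(A\cap A') \sqcup U_2\bigr]$, split on whether $M$ is connected (part 1) or has exactly two components (part 2), and in part (2) split further on whether one component is a singleton. The independent sets $B$, $C$, $\mathcal{C}$, $\mathcal{C}'$ you construct are the same ones the paper builds. The one place where your route genuinely differs is part (1): the paper picks an arbitrary third generator $I$ of the 3-face $\tau$ and checks, type by type, that $\G\{A\}\cap\G\{A'\}\cap\G\{I\}$ already has four components, so $(A,A',I)$ spans a 3-dimensional face, forcing $\tau = \cone(\mathfrak a,\mathfrak a',\mathfrak i)$ to be simplicial. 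You instead show directly that at most one $I$ can occur, by observing that every further generator must split $V(M)$ into the \emph{same} two parts and that the map sending a generator to its induced bipartition of $V(M)$ is injective across the three types. Both arguments work; yours makes the uniqueness of the third generator explicit, whereas the paper's leans on the fact that a 3-dimensional cone spanned by three rays, if it is a face, has no further extremal generators. Your part (2) is the paper's argument with the verifications that $B,C,\mathcal C,\mathcal C'$ actually lie in $\mathcal I_G^{(1)}$ (via connectedness of $G-a$, $G-a'$ and the neighbor conditions on $a,a'$) spelled out more fully.

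One small caution: your phrase ``by the connectedness of $M$ leaves all $M$-edges in $\G\{C'\}\cap M$'' is a bit out of order. The fact that every $M$-edge survives in $\G\{C'\}$ in the swapped-labelling case follows purely from $C$ and $C'$ being independent sets (no edges between $C'_1$ and $C'_2$, and no edges between $C_1\setminus\{a,a'\}$ and $C_2$); the connectedness of $M$ is then what turns ``$\G\{C'\}\cap M = M$'' into the contradiction (one component instead of two). The conclusion is right, just the logical role of the two ingredients is swapped in the prose.
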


\begin{proof}
$(1)$ The subgraph $G\{A \cap A'\}$ has three connected components.  Let $B=U_2 \backslash \{b\}$. We first investigate the intersection subgraph $\G\{A\} \cap \G\{A'\} \cap\G\{B\}$. By assumption, the dimension of its dual edge cone must be $m+n-4$. Therefore, the intersection subgraph has four connected components with three isolated vertices $a,a'\text{ and } b$. Hence $(A,B), (A',B) \in \Itwo$. The fact that $(A,A',A'') \in \Ithree$ is similarly obtained. Let $C \in \Ione$. We next investigate the intersection subgraph $\G\{A\} \cap \G\{A'\} \cap \G\{C\}$. It has by assumption four connected components with at least two isolated vertices, $a \text{ and } a'$. If $C_1 =\{a\}$ and $C_2=U_2\backslash \{\bullet\}$ then the intersection subgraph has three isolated vertices. In this case by \hyperref[acbcconditions]{Proposition \ref*{acbcconditions}}, $(A,C),(A',C) \in \Itwo$. For the other cases  $\G\{A\} \cap \G\{C\}$ and $\G\{A'\} \cap \G\{C\}$ have three connected components with one isolated vertex, i.e.\ $(A,C), (A',C) \in \Itwo$. Therefore $(A,A',C) \in \Ithree$.\\

\noindent $(2)$ The intersection subgraph $\G\{A\} \cap \G\{A'\}$ has four connected components. Since $a,a' \in U_1$ are isolated vertices of this graph, the proof falls naturally into two parts:\\
\indent (i) $\G\{A\} \cap \G\{A'\}$ has an isolated vertex $b \in U_2$ and $\G[(A\cap A') \sqcup (U_2 \backslash \{b\})]$ is connected. Since $G$, $\G[A \sqcup N(A)]$, and $\G[A' \sqcup N(A')]$ are connected, we obtain the following first independent set $C:=(A \cap A' )\sqcup \{b\} \in \mathcal{I}_G^{(1)}$. Also, since $\G[(A\cap A') \sqcup (U_2 \backslash \{b\})]$ is connected, then $\G[U_2 \sqcup B]$ is connected, i.e.\ $B:=U_2 \backslash \{b\} \in \Ione$. We observe in particular that $(A,B), (A',B), (A,C), (A',C) \in \Itwo$. Hence, we obtain $(A,A',B,C) \in \mathcal{I}_G^{(3)}$. In particular, in the case where $G = K_{2,2}$, the first independent set $C = \{b\}$ and therefore we obtain the edge cone $\sigma_{K_{2,2}}$ as the non-simplicial 3-face.  \\
\indent(ii) $\G[(A\cap A')  \sqcup U_2]$ has two connected components with no isolated vertices. Let us denote the vertex sets as $X_1 \sqcup X_2 = A\cap A'$ and $Y_1, \sqcup Y_2 \subsetneq U_2$ where $G[X_1 \sqcup Y_1]$ and $G[X_2 \sqcup Y_2]$ are connected. Since $\G\{A\}$ and $\G\{A'\}$ have two connected components, there exist edges $\{a,y_1\},\{a,y_2\},\{a',y'_1\},\{a',y'_2\} \in E(G)$ for some vertices $y_1,y'_1 \in Y_1$ and $y_2,y'_2 \in Y_2$. Thus $\mathcal{C}:= X_1 \sqcup Y_2 \in \Ione$ and $\mathcal{C}':=X_2 \sqcup Y_1 \in \Ione$. By \hyperref[ccconditions]{Lemma \ref*{ccconditions}} $(2)$, we know that $(C,C') \notin \Itwo$ and by \hyperref[acbcconditions]{Lemma \ref*{acbcconditions}} $(2)$, we know that $(A,C), (A,C'), (A',C), (A',C') \in \Itwo$. Hence, we obtain that $(A,A',C,C') \in \mathcal{I}_G^{(3)}$.
\end{proof}

\begin{lem}\label{3faceAB}
Let $A = U_1 \backslash \{a\}\in \mathcal{I}_G^{(1)}$ and $B = U_2 \backslash \{b\}\in \mathcal{I}_G^{(1)}$. Assume that $\{\mathfrak{a},\mathfrak{b}\}$ forms part of the extremal generators of a three-dimensional face of $\sigma_G$. 
\begin{enumerate}
\item[\normalfont{(1)}] If $(A,B) \in \Itwo$ then the three-dimensional face is either
\begin{enumerate}
\item[\normalfont{(i)}] the non-simplicial one from \hyperref[3faceAA]{ Lemma \ref*{3faceAA}} $(2)$$\casei$ or
\item[\normalfont{(ii)}] $(A,B,C,C') \in \Ithree$, with $C_1\backslash C'_1 = \{a\}$ and $C'_2 \backslash C_2 = \{b\}$ or $C'_1\backslash C_1 = \{a\}$ and $C_2 \backslash C'_2 = \{b\}$ or
\item[\normalfont{(iii)}] simplicial. 
\end{enumerate}
\item[\normalfont{(2)}] If $(A, B) \notin \Itwo$, then $(A,B,C,C') \in \Ithree$, where $C_1 \sqcup C'_1 = A$ and $C_2 \sqcup C'_2=B$.

\end{enumerate}
\end{lem}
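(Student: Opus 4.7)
The plan is to use the combinatorial characterization of faces from Theorem \ref{facetheorem} and analyze the connected components of the relevant intersection subgraphs. The key observation is that $\G\{A\} \cap \G\{B\}$ decomposes as the induced subgraph $G[V(G)\setminus\{a,b\}]$ together with the two isolated vertices $a$ and $b$.

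For part (1), the hypothesis $(A,B)\in\Itwo$ means $G[V(G)\setminus\{a,b\}]$ is connected, so $\G\{A\}\cap\G\{B\}$ has exactly three components. Any 3-face containing $\{\mathfrak{a},\mathfrak{b}\}$ must, by Theorem \ref{facetheorem}, arise from intersecting with further first independent sets so that the joint intersection has exactly four components. The simplicial subcase (iii) occurs when this is realized with a single additional ray. For non-simplicial subcases, I would invoke Corollary \ref{tatli}: any first independent set whose associated subgraph contains the four-component intersection contributes an extremal ray. A case distinction on how the split of $G[V(G)\setminus\{a,b\}]$ is realized then yields either (i) an isolated vertex $a'\in U_1$, which leads to $A'=U_1\setminus\{a'\}$ and the already analyzed configuration of Lemma \ref{3faceAA}(2)(i) with $C=(A\cap A')\sqcup\{b\}$; or (ii) a bipartite split by two two-sided first independent sets $C,C'$, where the 2-face incompatibility from Lemma \ref{ccconditions} (Type $\caseii$) together with the requirement that no further refinement occurs forces $C_1\setminus C'_1=\{a\}$ and $C'_2\setminus C_2=\{b\}$, up to swapping $C\leftrightarrow C'$.

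For part (2), the hypothesis $(A,B)\notin\Itwo$ means $G[V(G)\setminus\{a,b\}]$ has at least two components, so $\G\{A\}\cap\G\{B\}$ has at least four components. Since intersecting additional associated subgraphs cannot decrease this count, the existence of a 3-face containing $\mathfrak{a},\mathfrak{b}$ forces $G[V(G)\setminus\{a,b\}]$ to have exactly two components $P_1$ and $P_2$. Setting
\[
C_1:=P_1\cap U_1,\quad C'_2:=P_1\cap U_2,\quad C'_1:=P_2\cap U_1,\quad C_2:=P_2\cap U_2,
\]
and $C:=C_1\sqcup C_2$, $C':=C'_1\sqcup C'_2$, one has $C_1\sqcup C'_1=A$ and $C_2\sqcup C'_2=B$ by construction, and both $C,C'$ are independent since no edges run between $P_1$ and $P_2$ in $G\setminus\{a,b\}$. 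To verify maximality I would exploit that $A,B\in\Ione$ implies $G\setminus\{a\}$ and $G\setminus\{b\}$ are connected, forcing $b$ to have neighbors in both $C_1$ and $C'_1$ and $a$ to have neighbors in both $C_2$ and $C'_2$; combined with the connectedness of each bipartite component $P_i$, one obtains $N(C_2)=C'_1\cup\{a\}=U_1\setminus C_1$ and similarly $N(C_1)=C'_2\cup\{b\}=U_2\setminus C_2$, so $C$ (and symmetrically $C'$) is a maximal two-sided independent set. The two components of $\G\{C\}$ are $P_1\cup\{b\}$ and $P_2\cup\{a\}$, hence $C,C'\in\Ione$. Since every edge of $\G\{A\}\cap\G\{B\}$ lies inside $P_1$ or $P_2$, it is contained in both $\G\{C\}$ and $\G\{C'\}$, and Corollary \ref{tatli} yields $(A,B,C,C')\in\Ithree$.

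The hard part will be the case analysis in part (1)(ii): one must argue carefully that the combinatorial constraints imposed by the 2-face conditions on all the pairs among $\{A,B,C,C'\}$, together with the failure of $(C,C')$ itself to form a 2-face, exhaust precisely the stated structure and no other non-simplicial configuration. Degenerate situations in part (2) where $P_1$ or $P_2$ lies entirely in a single $U_i$ also deserve attention, as $C$ or $C'$ can then collapse to a one-sided set, which is consistent with the statement when suitably reinterpreted.
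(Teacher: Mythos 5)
Your treatment of part (2) is essentially the paper's: decompose $\G\{A\}\cap\G\{B\}$ into the isolated vertices $a,b$ and the graph $G-a-b$, deduce that $G-a-b$ must have exactly two components $P_1,P_2$, and define $C,C'$ as the two ``cross'' pairs. Your maximality and connectivity verification ($N(C_1)=C'_2\cup\{b\}$, etc.) is correct and actually more explicit than the paper's. However, the degenerate situation you flag at the end (a component $P_i$ lying entirely in one $U_j$) is not merely something that ``deserves attention''; the paper shows it \emph{cannot occur} when $G\not\subseteq K_{2,2}$: if $P_1$ were a single vertex $a'\in U_1$, then all neighbours of $a'$ would equal $\{b\}$, making $G-b$ disconnected and contradicting $B\in\Ione$. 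Without this argument part (2) is incomplete.

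Part (1) contains a genuine error. You describe subcase (ii) as arising from ``the 2-face incompatibility from Lemma \ref{ccconditions} (Type (ii))'' for the pair $(C,C')$, i.e.\ you are asserting $(C,C')\notin\Itwo$. That cannot happen here. By Theorem \ref{existenceofadeg} a non-simplicial $3$-face is spanned by exactly four rays, so its crosscut is a quadrilateral with four edges and two diagonals; since $(A,B)\in\Itwo$ is an edge by hypothesis, the two diagonals (non-$2$-faces) must be of the form $(A,C)$ and $(B,C')$ up to relabelling, and the remaining side opposite $(A,B)$ is $(C,C')$, which therefore \emph{does} span a $2$-face. In the configuration of statement (1)(ii) the pair is of Type (i), with $G[\{a\}\sqcup\{b\}]=(a,b)$ providing the connected bridge required by Lemma \ref{ccconditions}(1). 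The paper's proof reaches (1)(ii) by examining $\G\{A\}\cap\G\{B\}\cap\G\{C\}$ directly and identifying when the ``missing'' component can only be restored by a companion $C'$ obtained from $C$ by exchanging $a$ and $b$ across the bipartition; your sketch does not recover this, and the ``case distinction on how the split is realized'' is not carried out. You also do not justify why intersecting with a further one-sided $A'$ or $B'$ always yields either a simplicial face or the configuration of Lemma \ref{3faceAA}(2)(i), which the paper disposes of by mimicking Lemma \ref{3faceAA}(1).
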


\begin{proof}

$(1)$ The intersection subgraph $\G\{A\} \cap \G\{B\}$ has three connected components with two isolated vertices $a$ and $b$. Analysis similar to the proof of \hyperref[3faceAA]{ Lemma \ref*{3faceAA}} $(1)$ shows that $(A,A',B) \in \Ithree$ and $(A,B,B') \in \Ithree$. We investigate now the intersection $\G\{A\} \cap \G\{B\} \cap \G\{C\}$. If $\{a\} = C_1$ and $b \in N(C_1) $ with $N(C_1) \geq 3 $, then we have that $(A,B,C) \in \Ithree$ unless $\{b\} \sqcup C_1 \backslash \{a\}$ is an independent set. In this case, we obtain a first independent set $C' \in \Ione$ with $C_1 \backslash C'_1 = \{a\}$ and $C'_2 \backslash C'_2 = \{b\}$. If $N(C_1) =2 $, this gives rise to the case $(2)$ $\caseii$ from \hyperref[3faceAA]{ Lemma \ref*{3faceAA}} where $(A,A',B,C) \in \Ithree$. In the other cases similar to proof of \hyperref[3faceAA]{Lemma \ref*{3faceAA}}, we obtain that $(A,B,C) \in \Ithree$. \\

\noindent $(2)$ The intersection $\G\{A\} \cap \G\{B\}$ has of four connected components. This intersection subgraph cannot have four isolated vertices, because this means that we have that $G\subseteq K_{2,2}$. We studied these cases in \hyperref[kucuk]{Example \ref*{kucuk}} and in \hyperref[rigidcomplete]{Theorem \ref*{rigidcomplete}}. Assume that the intersection subgraph has three isolated vertices $\{a,a',b\}$ and one connected component. This means that $a' \cap U_2 \backslash \{b\}$ is an independent set. But this contradicts the fact that $B \in \Ione$. The case with three isolated vertices $\{a,b,b'\}$ is similarly impossible, because $A \in \Ione$. Assume lastly that the intersection has two isolated vertices $\{a,b\}$ and two connected graphs with vertex sets $X_1 \sqcup X_2 = A$ and $Y_1 \sqcup Y_2 = B$. Since $\G[A \sqcup N(A)]$ and $\G[B \sqcup N(B)]$ are connected, we obtain that $C: = X_1 \sqcup Y_2$ and $C':= X_2\sqcup Y_1$ of Type $\caseii$ and $(C,C') \notin \Itwo$. We conclude that $(A,C), (A,C'), (B,C), (B,C') \in \Itwo$ and $(A,B,C,C') \in \Ithree$. 
\end{proof}

\begin{ex}\label{exampleaaab}
Consider the first independent sets $A,A',C$ and $C'$ from \hyperref[exampleabc]{Example \ref*{exampleabc}}. Since $(A,A') \notin \Itwo$ and $\G\{A \cap A'\}$ has four connected components, we observe that $(A,A',C,C') \in \Ithree$ and it is the case from \hyperref[3faceAA]{Lemma \ref*{3faceAA}} (2)(ii). Let $B = U_2 \backslash \{9\} \in \Ione$. Then we observe that $B$ forms a 2-face with every first independent set except $B' = U_2 \backslash \{8\}$. In that case we have the symmetrical case to \hyperref[3faceAA]{Lemma \ref*{3faceAA}} (2)(i), namely $(B,B',A'',C) \in \Ithree$ with $A'' = U_1 \backslash \{3\}$. In particular, we observe that $(A'',B) \in \Itwo$ and $(A'',B') \in \Itwo$.
\end{ex}

\noindent The calculation of an intersection of subgraphs associated to three two-sided independent sets can easily become heavily combinatorial. Therefore, by using \hyperref[ccconditions]{Lemma \ref*{ccconditions}}, we would like to eliminate some cases of these two-sided independent sets resulting in a non-rigid toric variety. This will simplify the calculations for three-dimensional faces in \hyperref[3faceCC]{Lemma \ref*{3faceCC}}.

\begin{lem}\label{magicalcc}
Let $C = C_1 \sqcup C_2 \in \mathcal{I}_G^{(1)}$ and $C' = C'_1 \sqcup C'_2 \in \mathcal{I}_G^{(1)}$. If $(C,C') \notin \Itwo$ is of Type $\casei$, then $\TV(G)$ is not rigid.
\end{lem}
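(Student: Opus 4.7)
The hypothesis that $(C,C')$ is a Type~$\casei$ pair outside $\Itwo$ means, by Lemma~\ref{ccconditions}$(1)$, that the induced subgraph $G[(C'_1\backslash C_1)\sqcup(C_2\backslash C'_2)]$ has $k\geq 2$ connected components, whose vertex sets I label $X_i\sqcup Y_i$ with $X_i\subseteq C'_1\backslash C_1$ and $Y_i\subseteq C_2\backslash C'_2$; I single out two of them, say $i=1,2$. As recorded in the proof of Lemma~\ref{ccconditions}, for each $I\subseteq [k]$ the set
\[
\mathcal{C}^I:=\Big(C_1\sqcup\bigsqcup_{i\in I}X_i\Big)\sqcup\Big(C_2\backslash\bigsqcup_{i\in I}Y_i\Big)
\]
is a two-sided maximal independent set. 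For $I\in\{\emptyset,\{1\},\{2\},\{1,2\}\}$ I would verify that $\G\{\mathcal{C}^I\}$ has exactly two connected components, using that each $X_i$ is forced to have an edge into $N(C_1)$ and each $Y_i$ an edge into $N(C'_2)$ by $C',C\in\Ione$; hence $\mathcal{C}^I\in\Ione$. Write $\mathfrak{c}^I:=\pi(\mathcal{C}^I)\in\sigma_G$ for the associated extremal rays. Using the explicit formula $\mathfrak{c}^I=\sum_{i\in U_1\backslash\mathcal{C}^I_1}e_i-\sum_{m+j\in\mathcal{C}^I_2}f_j$ recorded in the proof of Theorem~\ref{smooth2co}, a coefficient-by-coefficient computation yields the key parallelogram relation
\[
\mathfrak{c}^\emptyset+\mathfrak{c}^{\{1,2\}}=\mathfrak{c}^{\{1\}}+\mathfrak{c}^{\{2\}}\quad\text{in }N.
\]

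Next I would apply Theorem~\ref{facetheorem} to upgrade this linear dependence to the statement that these four rays span a three-dimensional face $\tau\preceq\sigma_G$. By the theorem this reduces to checking that $\bigcap_{I}\G\{\mathcal{C}^I\}$ has exactly four connected components. A case analysis on where a vertex $i\in U_1$ sits (in $C_1$, $X_1$, $X_2$, or $U_1\backslash C'_1$) reveals that an edge $(i,m+j)\in E(G)$ survives every $\G\{\mathcal{C}^I\}$ precisely when both endpoints lie in a single block of the partition
\[
V(G)=\big(C_1\sqcup N(C_1)\big)\sqcup\big(X_1\sqcup Y_1\big)\sqcup\big(X_2\sqcup Y_2\big)\sqcup\big((U_1\backslash C'_1)\sqcup C'_2\big).
\]
Each block induces a connected subgraph of $G$: the outer two by $C,C'\in\Ione$, and the middle two by the choice of $X_i\sqcup Y_i$ as connected components of $G[(C'_1\backslash C_1)\sqcup(C_2\backslash C'_2)]$.

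To finish, I would pick $R\in M$ satisfying $\langle R,\mathfrak{c}^I\rangle=1$ for all four $I$; such an $R$ exists because the parallelogram relation is $\ZZ$-linear and the resulting system is consistent. The compact part $Q_\tau(R)^c$ is then the parallelogram $\conv(\mathfrak{c}^\emptyset,\mathfrak{c}^{\{1\}},\mathfrak{c}^{\{1,2\}},\mathfrak{c}^{\{2\}})$ with all four vertices lattice points, and opposite compact edges are antiparallel, so the cycle condition defining $V(R)$ collapses to $t_1=t_3$ and $t_2=t_4$. Thus $V(R)$ is two-dimensional and $V(R)/\CC(1,1,1,1)\neq 0$. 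Because all vertices are lattice points the non-lattice-vertex conditions in Theorem~\ref{altmann96} are vacuous, and $\TV(\tau)$ inherits smoothness in codimension $2$ from $\TV(G)$ (primitivity of a pair of rays passes to the saturated sublattice $N\cap\mathrm{span}(\tau)$), so $T^1_{\TV(\tau)}(-R)\neq 0$. Proposition~\ref{nonrigid} applied to the face $\tau\preceq\sigma_G$ then delivers non-rigidity of $\TV(G)$.

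The main obstacle is the edge-by-edge bookkeeping in the second paragraph, where one must confirm that no edge of $G$ bridging distinct blocks survives the four-fold intersection while every in-block edge does; a secondary technicality is the verification that each $\mathcal{C}^I$ is a genuine first independent set rather than merely a maximal one, which rests on the connectedness conditions inherited from $C,C'\in\Ione$ together with the bipartite component structure of $G[(C'_1\backslash C_1)\sqcup(C_2\backslash C'_2)]$.
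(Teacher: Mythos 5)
Your approach matches the paper's at its core: you build the same four first independent sets $\mathcal{C}^\emptyset = C,\ \mathcal{C}^{\{1\}},\ \mathcal{C}^{\{2\}},\ \mathcal{C}^{\{1,2\}}$, show they span a non-simplicial three-dimensional face $\tau$, and then feed $\tau$ into Proposition~\ref{nonrigid}. Your explicit parallelogram relation and the Minkowski-summand computation on $V(R)$ are a nice addition; the paper leaves the non-rigidity of the cone over the lattice parallelogram implicit.

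There is, however, a genuine gap in the crucial step where you assert ``pick $R\in M$ satisfying $\langle R,\mathfrak{c}^I\rangle=1$ for all four $I$; such an $R$ exists because the parallelogram relation is $\ZZ$-linear and the resulting system is consistent.'' Consistency over $\QQ$ (which is all the parallelogram relation gives you) does \emph{not} imply consistency over $\ZZ$: for instance, no integral functional is $1$ on all four vertices of $\{(0,0,2),(1,0,2),(1,1,2),(0,1,2)\}$ even though these satisfy the parallelogram relation. Since Altmann's Theorem~\ref{altmann96} collapses $V(R)$ to $\CC(1,\ldots,1)$ at non-lattice vertices, the lattice-vertex condition on all four $\overline{\mathfrak{c}^I}$ is not cosmetic; without it your one-dimensional $T^1_{\TV(\tau)}(-R)$ vanishes. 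You already wrote down the formula $\mathfrak{c}^I=\sum_{i\in U_1\backslash\mathcal{C}^I_1}e_i-\sum_{m+j\in\mathcal{C}^I_2}f_j$, and the paper's fix is to pick a concrete $R=e^{\alpha}+f^{\beta}$ with $\alpha\in N(C'_2)=U_1\backslash C'_1$ and $\beta\in N(C_1)=U_2\backslash C_2$: since $\mathcal{C}^I_1\subseteq C'_1$ and $\mathcal{C}^I_2\subseteq C_2$ for every $I$, one has $\alpha\in U_1\backslash\mathcal{C}^I_1$ and $\beta\notin\mathcal{C}^I_2$, hence $\langle R,\mathfrak{c}^I\rangle=1$ for all four $I$. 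Inserting that two-line check closes the gap.

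A smaller issue: when $k\geq 3$ the four-block decomposition you write out is not a partition of $V(G)$; the fourth block must be $\big(U_1\backslash(C_1\sqcup X_1\sqcup X_2)\big)\sqcup\big(C_2\backslash(Y_1\sqcup Y_2)\big)$, which absorbs the remaining $X_l\sqcup Y_l$ for $l\geq 3$ (they are tied in to $(U_1\backslash C'_1)\sqcup C'_2$ via the edges $Z_l\neq\emptyset$ you already invoked). With that correction the intersection still has exactly four connected components, so the 3-face identification stands.
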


\begin{proof}
Recall that $(C,C')$ of Type $\casei$ means that $C_1 \subsetneq C'_1$ and $C'_2 \subsetneq C_2$. By \hyperref[ccconditions]{Lemma \ref*{ccconditions}} $(1)$, we infer that if $(C,C') \notin \Itwo$, then $\G[(C'_1 \backslash C_1) \sqcup (C_2 \backslash C'_2)]$ has $k\geq2$ connected components without isolated vertices. Denote the vertex sets $X_i \subsetneq C'_1 \backslash C_1$ and $Y_i \subsetneq C_2 \backslash C'_2$, for $i \in [k]$. Since $C \in \Ione$, we know that $\G[C_2 \sqcup N(C_2)]$ is connected. Thus, for each $i \in [k]$, we obtain that $N( Y_i) = X_i \sqcup Z_i$ where $Z_i \subseteq N(C'_2)$. We can use the connectivity argument of $\G[C'_1 \sqcup N(C'_1)]$ symmetrically for each neighborhood vertex set $N(X_i)$. This implies that for a subset $I \subsetneq [k]$, there exist first independent sets of form
\[ \mathcal{C}^{I}:=(C_1 \sqcup \bigsqcup_{i \in I} X_i) \sqcup (C_2\backslash \bigsqcup_{i \in I}Y_i). \]
Now let $i , j \in [k]$ and consider the pair $(\mathcal{C}^{i}, \mathcal{C}^{j}) \notin \Itwo$ of Type $\casev$. We calculate the intersection subgraph $\G \{\mathcal{C}^{i}\} \cap \G \{\mathcal{C}^{j}\}$ as
\[\G[C_1 \sqcup N(C_1)] \sqcup \G[C_2 \backslash (Y_i \sqcup Y_j) \sqcup U_1 \backslash (C_1 \sqcup X_i \sqcup X_j)] \sqcup \G[X_i \sqcup Y_i] \sqcup \G[X_j \sqcup Y_j] \]
and conclude that it has four connected components. This means that $\{\mathfrak{c}^i , \mathfrak{c}^j\}$ is contained in the extremal generator set of a 3-face of $\sigma_G$. By \hyperref[tatli]{Corollary \ref*{tatli}}, we search for first independent sets such that the intersection subgraph $\G \{\mathcal{C}^{i}\} \cap \G \{\mathcal{C}^{j}\}$ is a subgraph of their associated subgraph. We observe that $\G\{C\}$ and $\G\{\mathcal{C}^{i,j}\}$ satisfy this condition. Moreover $(C,\mathcal{C}^i),(C, \mathcal{C}^j), (\mathcal{C}^i ,\mathcal{C}^{i,j}), (\mathcal{C}^{j}, \mathcal{C}^{i,j}) \in \Itwo$ of Type $\casei$. Hence we obtain the non-simplicial 3-face $(C,\mathcal{C}^i, \mathcal{C}^j, \mathcal{C}^{i,j}) \in \Ithree$. Let $\alpha \in N(C'_2)$ and $\beta \in N(C_1)$ be two vertices and let $R = e_{\alpha} + f_{\beta} \in M$ be a deformation degree. Since the associated extremal rays to the tuple  $(C,\mathcal{C}^i, \mathcal{C}^j, \mathcal{C}^{i,j})$ are all lattice vertices in $Q(R)$, by \hyperref[nonrigid]{Proposition \ref*{nonrigid}}, we conclude that $\TV(G)$ is not rigid.
\end{proof}

\begin{prop}\label{3faceCC}
Let $C = C_1 \sqcup C_2 \in \mathcal{I}_G^{(1)}$ and $C' = C'_1 \sqcup C'_2 \in \mathcal{I}_G^{(1)}$. Assume that $(C,C') \notin \Itwo$ and $\{\mathfrak{c}, \mathfrak{c'}\}$ forms part of the extremal generators of a three-dimensional face of $\sigma_G$. 
\begin{enumerate}
\item[\normalfont{(1)}] If $(C,C')$ is of Type $\caseii$, then one obtains the three-dimensional face either from \hyperref[3faceAA]{Lemma \ref*{3faceAA}} $(2)$$\caseii$ or from \hyperref[3faceAB]{Lemma \ref*{3faceAB}} $(2)$.
\item[\normalfont{(2)}] If $(C,C')$ is of Type $\caseiii$, then one obtains either one of the following 
\begin{enumerate}
\item[\normalfont(i)] $(A,C,C',C'') \in \Ithree$, where $C'' = (C_1 \cap C'_1) \sqcup (C_2 \sqcup C'_2) \in \Ione$ and $A = C_1 \cup C'_1 \in \Ione$. 
\item[\normalfont(ii)] $(C,C', \mathcal{C}, \mathcal{C}') \in \Ithree$, where $C_1 \cup C'_1 =U_1$, $\mathcal{C}_1 \sqcup \mathcal{C}'_1= C_1 \cap C'_1$, $\mathcal{C}_2 \cap \mathcal{C}'_2 = C_2 \sqcup C'_2$, and $\mathcal{C}_2 \sqcup \mathcal{C}'_2 = U_2$.  
\item[\normalfont(iii)]  $(B,C,C',C'')\in \Ithree$, where $C_1 \cup C'_1 =U_1$, $C'' = (C_1 \cap C'_1) \sqcup (C_2 \sqcup C'_2) \sqcup \{b\} \in \Ione$, and $B = U_2 \backslash \{b\} \in \Ione$.
\end{enumerate}
\item[\normalfont{(3)}]  If $(C,C')$ is of Type $\casev$, then there exist the first independent sets $\mathcal{C}:= (C_1 \cap C'_1) \sqcup (C_2\cup C'_2) \in \Ione$, $\mathcal{C}' := (C_1 \cup C_1) \sqcup (C_2 \cap C'_2) \in \Ione$ and one obtains that $(C,C', \mathcal{C}, \mathcal{C}') \in \Ithree$. 
\end{enumerate}

\end{prop}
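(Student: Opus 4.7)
By Theorem~\ref{facetheorem}, a subset $S \subseteq \Ione$ spans a face of dimension $d$ in $\sigma_G$ iff $\bigcap_{X \in S}\G\{X\}$ has $d+1$ connected components, and by Corollary~\ref{tatli}, every $X' \in \Ione$ whose associated subgraph contains $\G\{C\}\cap\G\{C'\}$ automatically appears among the generators of the face containing $\{\mathfrak{c},\mathfrak{c}'\}$. My approach is therefore, for each of the three types, to decompose
\[
\G\{C\}\cap\G\{C'\} \;=\; G_1 \sqcup G_2 \sqcup G_3 \sqcup G_4
\]
as in the proof of Lemma~\ref{ccconditions}, impose the four-component condition (the 3-face hypothesis), and enumerate all additional first independent sets whose associated subgraph contains the intersection.

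\textbf{Types (ii) and (iii).} For Type (ii), $G_1$ and $G_2$ reduce to isolated vertices from $U_2\setminus(C_2\cup C'_2)$ and $U_1\setminus(C_1\cup C'_1)$, while $G_3, G_4$ stay connected by the maximality of $C$ and $C'$; four components force exactly two such isolated vertices, which either both sit in the same $U_i$ (recovering the configuration of Lemma~\ref{3faceAA}(2)(ii)) or split one per part (recovering Lemma~\ref{3faceAB}(2)). For Type (iii), $G_2$ reduces to isolated vertices in $U_1\setminus(C_1\cup C'_1)$, the pieces $G_3, G_4$ are connected, and $G_1$ is nonempty since $C_1 \cap C'_1 \neq \emptyset$; the count $2 + \#(G_1) + |U_1\setminus(C_1\cup C'_1)| = 4$ leaves three sub-cases. (i) $G_1$ connected with one extra isolated vertex $u$: take $A := U_1\setminus\{u\} = C_1 \cup C'_1$ and $C'' := (C_1\cap C'_1)\sqcup(C_2\cup C'_2)$. (ii) $G_1$ has two components, both meeting $C_1\cap C'_1$, with $C_1\cup C'_1 = U_1$: define $\mathcal{C},\mathcal{C}'$ from the two components as in the statement. (iii) $G_1$ splits into a single isolated vertex $b \in U_2\setminus(C_2\cup C'_2)$ and a connected part, with $C_1\cup C'_1 = U_1$: take $B := U_2\setminus\{b\}$ and $C'' := (C_1\cap C'_1)\sqcup(C_2\cup C'_2)\sqcup\{b\}$. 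An isolated component of $G_1$ lying entirely inside $C_1\cap C'_1$ is ruled out since such a vertex would have no neighbors in $G$ at all, contradicting connectedness of $G$.

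\textbf{Type (v) and main difficulty.} For Type (v), the argument of Lemma~\ref{ccconditions}(5) already shows $\G\{C\}\cap\G\{C'\}$ has at least four nonempty pieces $G_1,\ldots,G_4$; the 3-face hypothesis forces exactly four, hence each $G_i$ is connected. Setting $\mathcal{C} := (C_1\cap C'_1)\sqcup(C_2\cup C'_2)$ and $\mathcal{C}' := (C_1\cup C'_1)\sqcup(C_2\cap C'_2)$, I verify via the connectedness of the $G_i$ together with the maximality of $C,C'$ that $\G\{\mathcal{C}\}$ and $\G\{\mathcal{C}'\}$ each have exactly two connected components and both contain $\G\{C\}\cap\G\{C'\}$; Corollary~\ref{tatli} then yields $(C,C',\mathcal{C},\mathcal{C}')\in\Ithree$. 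The main obstacle across all three types is precisely this step: confirming that the candidate sets $C''$, $\mathcal{C}$, $\mathcal{C}'$ really belong to $\Ione$. This amounts to showing that the connectedness inherited from $C,C'\in\Ione$ and from the connected pieces $G_i$ propagates correctly through the union/intersection operations defining the new sets, ruling out the possibility that their associated subgraphs have three or more connected components.
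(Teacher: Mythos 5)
Your proposal is correct and follows essentially the same route as the paper: decompose $\G\{C\}\cap\G\{C'\}$ into the pieces $G_1,\dots,G_4$ from Lemma~\ref{ccconditions}, impose the four-component condition, and read off the additional first independent sets whose associated subgraphs contain the intersection via Corollary~\ref{tatli}; the sub-case split in Type (iii) (one extra vertex in $U_1$ with $G_1$ connected, versus $C_1\cup C'_1=U_1$ with $G_1$ splitting into either two bipartite pieces or an isolated vertex in $U_2$ plus a connected piece) matches the paper's case analysis, and your argument ruling out an isolated vertex of $G_1$ inside $C_1\cap C'_1$ is the same observation the paper makes.
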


\begin{proof}

By the assumption, the intersection subgraph $\G\{C\} \cap \G\{C'\}$ has four connected components.\\
\noindent $(1)$ The intersection subgraph $\G\{C\} \cap \G\{C'\}$ has the following isolated vertices: 
 \[(N(C_2) \cap N(C'_2))\sqcup (N(C_1) \sqcup N(C'_1)). \]  
The number of isolated vertices can be at most two. If there is exactly one isolated vertex, we concluded in \hyperref[ccconditions]{Lemma \ref*{ccconditions}} that $(C,C') \in \Itwo$. Hence, we conclude that there are two isolated vertices. Assume that $N(C_2) \cap N(C'_2) = \{a,a'\}$ and $C_2 \sqcup C'_2 =U_2$. Since $\G[C_1 \sqcup N(C_1)]$ and $\G[C'_1 \sqcup N(C'_1)]$ are connected, we have that $A, A' \in \Ione$. We observe that $(A,A') \notin \Itwo$ and therefore it is the case that we examined in  \hyperref[3faceAA]{Lemma \ref*{3faceAA}}$(2)$$\caseii$. Assume now that $N(C_2) \cap N(C'_2) = \{a\}$ and $N(C_1) \cap N(C'_1) = \{b\}$. Similarly to the previous investigation, we have that $A,B \in \Ione$ and it is the case that we examined in  \hyperref[3faceAB]{Lemma \ref*{3faceAB}} $(2)$.\\
 
\noindent $(2)$ It is impossible that $C_2 \sqcup C'_2 = U_2$, because then $C_1 \cap C'_1$ is a set of isolated vertices in $G$. We also conclude that $U_1 \backslash (C_1 \cup C'_1)$ has at most one vertex. Assume first that $C_1 \cup C'_1 = U_1$. In the intersection subgraph $\G\{C\} \cap \G\{C'\}$, there cannot be isolated vertices in $C_1 \cap C'_1$, because this implies that these are isolated vertices in $G$. Since $\G[C_2 \sqcup N(C_2)]$ and $\G[C'_2 \sqcup N(C'_2)]$ are connected, there are two possibilities for the subgraph $\G[(C_1 \cap C'_1) \sqcup (N(C_1) \sqcup N(C'_1))]$: \\

\indent $\bullet$ The subgraph $\G[(C_1 \cap C'_1)\sqcup U_2 \backslash (C_2 \sqcup C'_2 \sqcup \{b\})]$ is connected. This implies that there exist first independent sets $C'' := (C_1 \cap C'_1) \sqcup C_2 \sqcup C'_2 \sqcup \{b\}$ and $B= U_2 \backslash \{b\}$.  Moreover $(C,C'') \in \Itwo$ and $(C',C'') \in \Itwo$ are of Type $\casei$ and $(B,C'') \notin \Itwo$. \\

\indent $\bullet$ The subgraph has two connected components and no isolated vertices. Let us denote their vertex sets as $X_i \subsetneq C_1 \cap C'_1$ and $Y_i \subsetneq U_2 \backslash \{C_2 \sqcup C'_2\}$. Then there exist two first independent sets:
\[\mathcal{C} : = X_1 \sqcup C_2 \sqcup C'_2 \sqcup Y_2 \in \Ione\]
\[\mathcal{C}': = Y_1 \sqcup C_2 \sqcup C'_2 \sqcup Y_1 \in \Ione\]
We observe that $(C,\mathcal{C}), (C,\mathcal{C}'), (C',\mathcal{C}), (C', \mathcal{C}') \in \Itwo$ of Type $\casei$. In particular, $(\mathcal{C}, \mathcal{C}') \notin \Itwo$ of Type $\caseiv$. \\

\noindent Assume now that $U_1 \backslash C_1 \cup C'_1 = \{a\}$. Then the subgraphs $\bf {G_1} $, $\bf {G_3} $, and $\bf {G_4} $ must be connected. Moreover, there exist two first independent sets $C'':= (C_1 \cap C'_1) \sqcup C_2 \sqcup C'_2 \in \Ione$ and $A =U_1 \backslash \{a\}$. We observe that $(A,C), (A,C') \in \Itwo$ and the pairs $(C,C''), (C',C'') \in \Itwo$ are of Type $\casei$.    \\

\noindent $(3)$ One cannot have that $C_1 \cup C'_1 = U_1$ or $C_2 \cup C'_2 =U_2$, because otherweise $G$ has isolated vertices. Also, the subgraph $\bf {G_i} $ must be connected for each $i \in [4]$. We thus observe that there exist two first independent sets 
\[\mathcal{C}: = (C_1 \cap C'_1 ) \sqcup (C_2 \cup C'_2) \in \Ione\]
\[\mathcal{C}':= (C_1 \cup C'_1 ) \sqcup (C_2 \cap C'_2) \in \Ione\]
of Type $\casei$. Moreover we have that $(C, \mathcal{C}), (C', \mathcal{C}), (C, \mathcal{C}'), (C', \mathcal{C}') \in \Itwo$, but $(C,C') \notin \Itwo$. 
\end{proof}

\noindent Consider the pair $(B,C) \in \Itwo$ such that $\{\mathfrak{b},\mathfrak{c}\}$ forms part of the extremal ray generators of a three-dimensional face. We covered all possible triples of the forms $(A,B,C)$ and $(B,B,C)$ in \hyperref[3faceAA]{Lemma \ref*{3faceAA}} and \hyperref[3faceAB]{Lemma \ref*{3faceAB}}. For the triples of form $(B,C,C')$, we finished studying the cases where $(C,C') \notin \Itwo$ in \hyperref[3faceCC]{Proposition \ref*{3faceCC}}. We are left with the task of determining the cases where $(C,C') \in \Itwo$.
\begin{lem}
Let $B = U_2 \backslash \{b\}\in \mathcal{I}_G^{(1)}$, $C = C_1 \sqcup C_2 \in \mathcal{I}_G^{(1)}$ and $C' = C'_1 \sqcup C'_2 \in \mathcal{I}_G^{(1)}$. Assume that $(B,C) \in \Itwo$, $(C,C') \in \Itwo$ and $\{ \mathfrak{b}, \mathfrak{c}, \mathfrak{c'}\}$ forms part of the extremal generators of a three-dimensional face of $\sigma_G$. Then the three-dimensional face is either
\begin{enumerate}
\item[\normalfont{(1)}] $(A,B,C,C') \in \Ithree$ from \hyperref[3faceAB]{Lemma \ref*{3faceAB}} $(1) \caseii$ or
\item[\normalfont{(2)}] simplicial. 
\end{enumerate}
\end{lem}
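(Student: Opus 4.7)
My plan is to assume the three-dimensional face $\tau$ is non-simplicial and argue that it must fit case~(1). By \hyperref[existenceofadeg]{Theorem \ref*{existenceofadeg}}, any non-simplicial three-dimensional face of $\sigma_G$ has exactly four extremal ray generators, so there is a fourth first independent set $D\in\Ione$ with $(B,C,C',D)\in\Ithree$; \hyperref[facetheorem]{Theorem \ref*{facetheorem}} then gives that $\G\{B\}\cap\G\{C\}\cap\G\{C'\}\cap\G\{D\}$ has exactly four connected components.

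Next, I would exploit the combinatorial structure of $\tau$: as a three-dimensional cone with four extremal rays it is combinatorially a cone over a quadrilateral, so four of the six pairs of generators span 2-faces of $\tau$ and two form the diagonals. The hypotheses $(B,C),(C,C')\in\Itwo$ pin down the quadrilateral, since the vertex $C$ is adjacent to both $B$ and $C'$: the other 2-face pairs must then be $(C',D)$ and $(D,B)$, while the diagonals are $(B,C')$ and $(C,D)$, so in particular $(C,D)\notin\Itwo$ and $(B,C')\notin\Itwo$.

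The bulk of the proof is a case analysis on the type of the fourth independent set $D$. If $D=A=U_1\setminus\{a\}$ is one-sided in $U_1$, then $(A,B)\in\Itwo$ and I would apply \hyperref[3faceAB]{Lemma \ref*{3faceAB}}\,(1) to the four-tuple $\{A,B,C,C'\}\in\Ithree$: configuration (i) there has only one two-sided generator, contradicting that both $C$ and $C'$ are two-sided; configuration (iii) would make the face simplicial, contradicting our assumption; so we are forced into configuration (ii), which is precisely case~(1). If $D=B'=U_2\setminus\{b'\}$ with $b'\neq b$ is one-sided in $U_2$, then $(B,B')\in\Itwo$ and the role-reversed version of \hyperref[3faceAA]{Lemma \ref*{3faceAA}} applied to $(B,B')$ yields either a configuration with at most one two-sided generator or one that forces $(C,C')\notin\Itwo$, both contradicting our hypotheses.

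The hard part will be the case $D=D_1\sqcup D_2$ two-sided, where $(C,D)\notin\Itwo$. Here $(C,D)$ belongs to one of the Types (i)--(v) preceding \hyperref[ccconditions]{Lemma \ref*{ccconditions}}, and I would use \hyperref[magicalcc]{Lemma \ref*{magicalcc}} for Type (i) and \hyperref[3faceCC]{Proposition \ref*{3faceCC}} for Types (ii), (iii) and (v) to enumerate all possible four-tuples generating a non-simplicial three-dimensional face containing $\{\mathfrak{c},\mathfrak{d}\}$. For each enumerated configuration one has to cross-check against the requirement that the remaining two generators be the one-sided $B$ and a two-sided $C'$ satisfying $(B,C),(C,C')\in\Itwo$ together with $(B,C'),(C,D)\notin\Itwo$: in configurations whose four generators are all two-sided the presence of $B$ is ruled out immediately, while in the remaining ones the symmetric-difference relations between $C_1,C_1',C_2,C_2'$ should collapse to the singleton-difference structure of \hyperref[3faceAB]{Lemma \ref*{3faceAB}}\,(1)(ii), so $D$ could not have been two-sided in the first place and we return to case~(1).
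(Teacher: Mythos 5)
Your decomposition---fix the fourth extremal generator $D$ of the (assumed non-simplicial) face and case on whether it is one-sided in $U_1$, one-sided in $U_2$, or two-sided---is a genuinely different route from the paper's proof, which instead runs through the Types (i)--(iv) of the pair $(C,C') \in \Itwo$ and, within each, determines when $(B,C')$ can fail to span a $2$-face. Your first two branches are handled correctly: if $D=A$ you eliminate configurations (i) and (iii) of Lemma~\ref{3faceAB}(1) by comparing the counts of one-sided versus two-sided generators, and if $D=B'$ the $U_2$-symmetric form of Lemma~\ref{3faceAA}(1) forces simpliciality, a contradiction.

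The third branch, which you defer as ``the hard part,'' is where the argument has a real gap, and the closing assertion that ``$D$ could not have been two-sided in the first place'' is not justified. Proposition~\ref{3faceCC}(2)(iii) produces a non-simplicial three-dimensional face of the form $(B,C,C',C'')$ with a single one-sided generator $B$ and three two-sided generators; its diagonals are $(C,C')$ and $(B,C'')$, so relabelling $\mathcal{C}:=C$, $\mathcal{C}':=C''$ places this face under the lemma's hypotheses with fourth generator $D=C'$ two-sided and $(\mathcal{C},D)=(C,C')$ of Type~(iii), not Type~(i). Nothing in Proposition~\ref{3faceCC}(2)(iii) forces $\mathcal{C}_1\setminus\mathcal{C}'_1 = C_1\setminus C'_1$ to be a singleton, so one cannot conclude from the symmetric-difference relations that the face is of the two-one-sided form $(A,B,\cdot,\cdot)$ of Lemma~\ref{3faceAB}(1)(ii). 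Your enumeration therefore needs to confront precisely which configurations of Proposition~\ref{3faceCC} can supply the fourth generator when $B$ is already fixed as a $2$-face partner of $\mathcal{C}$ and $\mathcal{C}'$; as written this is asserted, not shown, and it is exactly the content the paper's proof supplies (informally) through the sub-case ``$b\in C'_2\setminus C_2$'' of its Type~(i) analysis.
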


\begin{proof}
Consider the intersection $\G\{C\} \cap \G\{C'\}$. If $(C,C')$ is of Type $\casei$, without loss of generality let us assume that $C'_1 \subsetneq C_1$ and $C_2 \subsetneq C'_2$. For each type of $(C,C') \in \Itwo$, the induced subgraph $\G[C_2 \sqcup N(C_2)]$ is not empty. If $b \in C_2$, then we obtain that $(B,C') \in \Itwo$. For the rest, we divide the proof into the four types of the pair $(C,C') \in \Itwo$: \\ 

\noindent \underline{Type $\casei$}:  Let $b \in C'_2 \backslash C_2$. The triple $(B,C,C') \notin \Itwo$ if and only if $C_1 \backslash C'_1 = \{a\}$. This is the case from \hyperref[3faceAB]{Lemma \ref*{3faceAB}} (1)$\caseii$. Let $b \in N(C'_1)$. We conclude that $\G[C'_1 \sqcup N(C'_1) \backslash \{b\}]$ is connected and therefore $(B,C') \in \Itwo$.\\

\noindent \underline{Type $\caseii$}: Let $b \in C'_2$. Then $\G[C'_2 \backslash \{b\} \sqcup N(C'_2)]$ is connected, since otherwise $B \notin \Ione$. Hence $(B,C') \in \Itwo$. Note that we cannot have that $C_2 \sqcup C'_2 \sqcup \{b\} = U_2$, since otherwise $B \in \Ione$.\\

\noindent \underline{Type $\caseiii$}: Let $b \in C'_2$. Then $\G[(C'_2 \backslash\{b\}) \sqcup (U_1 \backslash C'_1)]$ is connected and therefore $(B,C') \in \Itwo$. If $b \in U_2 \backslash (C_2 \sqcup C'_2)$, we conclude similarly that $(B,C,C') \in \Ithree$. Note that as in the case of Type $\caseii$, $C_2 \sqcup C'_2 \sqcup \{b\} \neq U_2$.\\  

\noindent \underline{Type $\caseiv$}: Let $b \in C'_2 \backslash C_2$. Since $B \in \Ione$, the induced subgraph $\G[C'_2\backslash \{b\} \sqcup N(C'_2)]$ is connected. Hence $(B,C') \in \Itwo$. 
\end{proof}

\begin{cor}\label{3faceAC}
Let $B = U_2 \backslash \{b\}\in \mathcal{I}_G^{(1)}$ and $C = C_1 \sqcup C_2 \in \mathcal{I}_G^{(1)}$. Assume that $(B,C) \notin \Itwo$ and $\{ \mathfrak{b}, \mathfrak{c}\}$ forms part of the extremal generators of a three-dimensional face of $\sigma_G$. Then one obtains the non-simplicial three-dimensional face in \hyperref[3faceAA]{Lemma \ref*{3faceAA}} $(2)$$\casei$ or in \hyperref[3faceAB]{Lemma \ref*{3faceAB}} or in \hyperref[3faceCC]{Proposition \ref*{3faceCC}} $(2)$$\casei$ and $\caseiii$.

\end{cor}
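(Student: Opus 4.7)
The plan is to match each configuration back to one of the enumerations already completed in Lemma \ref{3faceAA}, Lemma \ref{3faceAB}, and Proposition \ref{3faceCC}. Two structural observations set the stage. First, since $\{\mathfrak{b},\mathfrak{c}\}$ lies in a three-dimensional face $\tau$ but does not itself span a two-face, $\tau$ cannot be simplicial — every pair of extremal rays of a simplicial 3-dimensional cone spans a 2-face — so $\tau$ has at least four extremal ray generators. Second, by Theorem \ref{facetheorem} the intersection subgraph $\G\{B\}\cap\G\{C\}$ has exactly four connected components (any fewer would force $(B,C)\in\Itwo$ or coincidence of rays, any more would push $\tau$ beyond dimension three).

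Next, I split into cases according to whether $b \in C_2$ or $b \in N(C_1)$, these being exhaustive because $U_2 = C_2 \sqcup N(C_1)$ by Proposition \ref{prop3}. In the first case
\[
\G\{B\}\cap\G\{C\} \;=\; \G[C_1 \sqcup N(C_1)] \;\sqcup\; \G[(C_2\setminus\{b\}) \sqcup N(C_2)] \;\sqcup\; \{b\};
\]
since $\G[C_1\sqcup N(C_1)]$ is connected and $\{b\}$ contributes one component, the middle summand must split into exactly two components. Symmetrically, when $b \in N(C_1)$,
\[
\G\{B\}\cap\G\{C\} \;=\; \G[C_1 \sqcup (N(C_1)\setminus\{b\})] \;\sqcup\; \G[C_2 \sqcup N(C_2)] \;\sqcup\; \{b\},
\]
and the first summand must split into exactly two components. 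The assumption $(B,C)\notin\Itwo$, translated via the $U_1\leftrightarrow U_2$ symmetric counterpart of Proposition \ref{acbcconditions}, is precisely what produces this disconnection.

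I then identify the remaining extremal rays of $\tau$ via Corollary \ref{tatli}. In the boundary subcase $C_2=\{b\}$ with $|N(C_2)|=2$, the two vertices of $N(C_2)$ yield one-sided first independent sets $A, A'$ with $(A,A')\notin\Itwo$, placing us in Lemma \ref{3faceAA} $(2)\casei$. When the splitting is genuine on the $U_2$-side, the two components reassemble into a second two-sided maximal independent set $C'$ with $(C,C')$ of Type $\caseiii$; a case check on the additional rays of $\tau$ then matches Proposition \ref{3faceCC} $(2)\casei$ if $\tau$ also contains a one-sided ray of type $A$, and $(2)\caseiii$ otherwise. The symmetric case $b \in N(C_1)$ either reproduces Proposition \ref{3faceCC} $(2)\casei$ or $(2)\caseiii$, or else forces a one-sided ray of type $A$ inside $\tau$, whereupon Lemma \ref{3faceAB} takes over.

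The cleanest part of the argument is the exclusion of Proposition \ref{3faceCC} $(2)\caseii$ (which is \emph{not} in the corollary's list): the 3-face described there has four two-sided extremal rays, yet our $\tau$ contains the one-sided ray $\mathfrak{b}$, so that case is automatically ruled out. The main obstacle is instead the bookkeeping in the disconnection subcases — confirming via Corollary \ref{tatli} that the newly produced first independent sets $A, A'$ or $C'$ actually index extremal rays of the same face $\tau$, and verifying that no configuration outside the four enumerated ones can accommodate the structure of $\G\{B\}\cap\G\{C\}$ forced by $(B,C)\notin\Itwo$.
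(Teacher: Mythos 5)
Your overall plan mirrors the paper's: use Theorem~\ref{existenceofadeg} and Theorem~\ref{facetheorem} to conclude $\tau$ has four extremal ray generators and $\G\{B\}\cap\G\{C\}$ has exactly four connected components, then split by whether $b \in C_2$ or $b \in N(C_1)$ and read off the other two rays. The decompositions of $\G\{B\}\cap\G\{C\}$ you write down for the two cases are correct, and the exclusion of Proposition~\ref{3faceCC}~$(2)\caseii$ because $\tau$ must contain the one-sided ray $\mathfrak b$ is a valid and clean observation.

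However, the bookkeeping in the matching step is off in a way that constitutes a genuine gap. The sub-case that lands in Lemma~\ref{3faceAB} is \emph{not} in the branch $b \in N(C_1)$. In that branch, isolated vertices in $\G[C_1 \sqcup (N(C_1)\setminus\{b\})]$ cannot occur: a vertex $v \in C_1$ isolated there would have $N(\{v\}) = \{b\}$ and hence be isolated in $\G[B \sqcup U_1]$, contradicting $B \in \Ione$; and a vertex $j \in N(C_1)\setminus\{b\}$ isolated there contradicts $j \in N(C_1)$. So both components are genuinely mixed, the two extra rays are two-sided (forming a pair of Type~$\caseiv$), and no ray of type $A$ can appear. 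Lemma~\ref{3faceAB} instead arises \emph{inside the $b \in C_2$ branch}: if $\G[(C_2\setminus\{b\}) \sqcup N(C_2)]$ has one component that is an isolated vertex $a \in N(C_2)$ and one mixed component, then $\tau = (\mathfrak a, \mathfrak b, \mathfrak c, \mathfrak x)$ with $\mathfrak x$ two-sided, and the non-$2$-face pair $(A, X)$ together with $(B,C)$ places this exactly in Lemma~\ref{3faceAB}~$(1)\caseii$. Your ``boundary'' case $C_2 = \{b\}$, $|N(C_2)|=2$ is the further degeneration where both components become isolated, giving Lemma~\ref{3faceAA}~$(2)\casei$. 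Relatedly, the criterion ``matches Proposition~\ref{3faceCC}~$(2)\casei$ if $\tau$ also contains a one-sided ray of type $A$'' is not right: $(2)\casei$ has rays $(A, C, C', C'')$ with no $B$, so a face containing both $\mathfrak a$ and $\mathfrak b$ cannot match it — that configuration is Lemma~\ref{3faceAB}. The appearance of $(2)\casei$ in the corollary's list is the $U_1 \leftrightarrow U_2$ symmetric version (where the distinguished one-sided ray becomes $B$), and it corresponds to the $b \in N(C_1)$ branch with the Type~$\caseiv$ pair; the $b \in C_2$ branch with two mixed components gives $(2)\caseiii$. Also, the two mixed components produce \emph{two} additional two-sided first independent sets, not ``a second'' $C'$ alone.
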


\begin{proof}
We only need to show that there exists no three-dimensional face containing the extremal rays $\{\mathfrak{b}, \mathfrak{c}, \mathfrak{c'}\}$ where $(C,C') \in \Itwo$ and $(B,C') \notin \Itwo$. Consider the intersection $\G\{B\} \cap \G\{C\}$ which has four connected components. Since we want to have another $\mathfrak{c}'$ in the generator set, we have two possibilities: \\
$\bullet$ If $b \in C_2$, there exist two first independent sets $\mathcal{C}^1$ and $\mathcal{C}^2$ such that $C_1 \cup \mathcal{C}_1^1 \cup \mathcal{C}_1^2 = U_1$ and $\mathcal{C}_2^1 \sqcup \mathcal{C}_2^2 \sqcup \{b\} = C_2$. \\
$\bullet$ If $b \in N(C_1)$, there exist two first independent sets $\mathcal{C}^1$ and $\mathcal{C}^2$ such that $\mathcal{C}_1^1 \sqcup \mathcal{C}_1^2 = C_1$ and $C_2 \cup \mathcal{C}_2^1 \cup \mathcal{C}_2^2 \sqcup \{b\} = N(C_1)$. \\
However, these have been examined in  \hyperref[3faceCC]{Proposition \ref*{3faceCC}} $(2)$$\casei$ and $\caseiii$.
\end{proof}

\begin{ex}
Consider the first independent sets $B = U_2 \backslash \{9\}$ and $C' = \{1,2\} \sqcup \{8,9\}$ from the bipartite graph in \hyperref[exampleaaab]{Example \ref*{exampleabc}}. We observe that $(B,C') \in \Itwo$ and if $\{\mathfrak{b}, \mathfrak{c'}\}$ forms part of the extremal generators of a three dimensional face, then this 3-face is simplicial by \hyperref[3faceCC]{Proposition \ref*{3faceCC}}. Moreover as studied in \hyperref[araexample]{Example \ref*{araexample}}, $(C,C') \notin \Itwo$ is of Type (ii) and we obtain the case from \hyperref[3faceCC]{Proposition \ref*{3faceCC}} (1) or equivalently from \hyperref[3faceAA]{Lemma \ref*{3faceAA}} $(2)$$\caseii$ for the 3-faces containing $\{\mathfrak{c}, \mathfrak{c'}\}$.
\end{ex}

\noindent Finally, we are left with characterizing the triples $(C,C',C'') \in \Ithree$. The next result, follows by the recent calculations and \hyperref[ccconditions]{Lemma \ref*{ccconditions}}.
\begin{cor}
Let $C$, $C'$ and $C''$ be three first independent sets of $G$. Assume that $(C,C',C'') \in \Ithree$ forms a three-dimensional face of $\sigma_G$. Then its two-dimensional faces are one of the following type:
\begin{enumerate}
\item[$\bullet$] $(\casei, x, x)$, $x\in \{\casei,\caseii,\caseiii,\caseiv\}$.
\item[$\bullet$] $(\casei, \caseii, \caseiii)$, $(\casei, \caseii, \caseiv)$, $(\casei, \caseiii, \caseiv)$, $(\caseiii, \caseiii, \caseiii)$, $(\caseiv, \caseiv, \caseiv)$.
\end{enumerate} 
\end{cor}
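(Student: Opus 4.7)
The plan is to proceed by exhaustive case analysis on the unordered multiset of types of the three pairs $\{(C,C'),(C,C''),(C',C'')\}$. Since each pair must be a 2-face, Lemma~\ref{ccconditions} forbids Type~(v), leaving four possible types per pair and thus twenty unordered multisets. Nine appear in the statement, so I must derive a contradiction for each of the remaining eleven.

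A key reusable observation I would record upfront is a sharpening of Lemma~\ref{ccconditions}: since $G$ is bipartite, the induced subgraph $G[C_1\cap C'_1]$ is edgeless, so the connectedness requirement for a Type~(iii) 2-face forces $|C_1\cap C'_1|=1$; symmetrically, Type~(iv) forces $|C_2\cap C'_2|=1$. Combined with the coverings $C_1\cup C'_1=U_1$ for Type~(iii), $C_2\cup C'_2=U_2$ for Type~(iv), and the Type~(ii) almost-covering $C\sqcup C'=V(G)\backslash\{v\}$, this provides the set-theoretic machinery for nearly all of the contradictions.

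I would then dispatch the eleven forbidden multisets in four batches. For $(ii,ii,\ast)$, the Type~(ii) coverings force $C''\subseteq C'\cup\{v\}$ in both parts, after which the third pair's condition collapses one part of $C''$ to be empty, contradicting two-sidedness. For $(ii,\mathrm{iii/iv},\mathrm{iii/iv})$, I would case on the orientation of the Type~(ii) pair and use the Type~(iii)/(iv) condition on $(C,C'')$ to force $C''_1\supseteq C'_1$ and $C''_2\subseteq C'_2$ (or the symmetric inclusions); the third pair's disjointness requirement then yields either $C''_2=\emptyset$ or $C''_1=U_1$. For $(iii,iii,iv)$ and $(iii,iv,iv)$, two Type~(iii) pairs containing $C$ give $C_2\cap C'_2=C_2\cap C''_2=\emptyset$ while the Type~(iv) pair supplies $C'_2\cup C''_2=U_2$, forcing $C_2=\emptyset$. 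For $(i,i,ii)$ and $(i,i,iv)$, splitting on whether the two Type~(i) pairs nest in the same or opposite direction, one immediately sees $C'_1\cap C''_1\neq\emptyset$ in either case, contradicting the third pair's disjointness.

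The main obstacle is $(i,i,iii)$. I would first rule out the opposite-direction sub-case: the chain $C''\subsetneq C\subsetneq C'$ forces $C'_1\cup C''_1=C'_1$, which combined with the Type~(iii) covering $C'_1\cup C''_1=U_1$ makes $C'$ one-sided, contradicting connectedness of $G$. In the same-direction sub-case, the singleton condition $|C'_1\cap C''_1|=1$ together with $\emptyset\neq C_1\subseteq C'_1\cap C''_1$ forces $C_1=C'_1\cap C''_1$, and then $N(C_1)=U_2\backslash C_2$ combined with the Type~(iii) equation $N(C'_1\cap C''_1)=U_2\backslash(C'_2\sqcup C''_2)$ yields $C_2=C'_2\sqcup C''_2$. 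A careful edge analysis using the independence and maximality of $C',C''$ then shows that $C'_2$ is adjacent only to $C''_1\backslash C_1$ and $C''_2$ only to $C'_1\backslash C_1$, so $G[C_2\sqcup(U_1\backslash C_1)]$ splits into two disjoint connected pieces, contradicting the first-independence of $C$. The case $(i,i,iv)$ follows by the $U_1\leftrightarrow U_2$ symmetry.
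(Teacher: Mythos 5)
The paper's own proof is a one-line pointer to ``the recent calculations and Lemma~\ref{ccconditions},'' so an exhaustive case analysis like the one you outline is the natural way to actually substantiate the statement, and most of your eleven eliminations (the $(ii,ii,\ast)$, $(ii,\mathrm{iii/iv},\mathrm{iii/iv})$, $(iii,iii,iv)$, $(iii,iv,iv)$, $(i,i,ii)$, $(i,i,iv)$-disjointness, and the $(i,i,\ast)$ opposite-direction subcases) are clean set-theoretic arguments that go through. However, there is a genuine gap in your ``sharpening'' of Lemma~\ref{ccconditions}, and it is used essentially in the $(i,i,iii)$ same-direction subcase (and hence also in $(i,i,iv)$ by symmetry).

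You read the Type~(iii) clause ``$G[C_1\cap C'_1]$ is connected'' literally; since $C_1\cap C'_1\subseteq U_1$ and $G$ is bipartite, a literal reading would indeed force $|C_1\cap C'_1|=1$. But this contradicts the lemma's own proof, where the condition established is that the subgraph $\mathbf{G_1}=\G\big[(C_1\cap C'_1)\sqcup \big(U_2\setminus(C_2\cup C'_2)\big)\big]$ is connected — a bipartite graph with vertices on \emph{both} sides, which can certainly be connected with $|C_1\cap C'_1|\geq 2$. The phrase in the lemma statement should be read as abbreviating ``$G[(C_1\cap C'_1)\cup N(C_1\cap C'_1)]$ is connected, with $N(C_1\cap C'_1)=U_2\setminus(C_2\sqcup C'_2)$.'' A concrete witness: take $G\subset K_{4,3}$ with $U_1=\{1,2,3,4\}$, $U_2=\{5,6,7\}$ and edges $(1,6),(1,7),(2,7),(3,7),(4,5),(4,7)$; then $C=\{1,2,3\}\sqcup\{5\}$ and $C'=\{2,3,4\}\sqcup\{6\}$ are first independent sets with $(C,C')\in\Itwo$ of Type~(iii) and $|C_1\cap C'_1|=2$. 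So the singleton shortcut is unavailable, and your derivation $C_1=C'_1\cap C''_1$ (hence $C_2=C'_2\sqcup C''_2$, hence the splitting of $G[C_2\sqcup(U_1\setminus C_1)]$) does not follow. To close the $(i,i,iii)$ and $(i,i,iv)$ cases you would need an argument that works when $C_1\subsetneq C'_1\cap C''_1$ strictly, using the actual connectedness requirement on $\mathbf{G_1}$ rather than a cardinality constraint.
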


\subsection{Non-rigidity for toric varieties with non-simplicial three-dimensional faces}
Using the compilations from \hyperref[threefacesection]{Section \ref*{threefacesection}}, we characterize the non-simplicial three-dimensional faces of $\sigma_G$ and show that in this case $\TV(G)$ is not rigid. After that, we are reduced to proving the rigidity for the toric varieties whose edge cone $\sigma_G$ admits only simplicial three-dimensional faces. We classified this type of edge cones explicitly in \hyperref[threefacesection]{Section \ref*{threefacesection}}.
 
\begin{thm}\label{existenceofadeg}
Let $G \subseteq K_{m,n}$ be a connected bipartite graph and let $\tau \preceq \sigma_G$ be a three-dimensional non-simplicial face of the edge cone $\sigma_G$. Then $\tau$ is spanned by four extremal rays. 
\end{thm}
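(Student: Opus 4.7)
The plan is to exploit the case analysis of non-$2$-face pairs already carried out in Lemmas~\ref{3faceAA}, \ref{3faceAB}, \ref{magicalcc}, Proposition~\ref{3faceCC} and Corollary~\ref{3faceAC}, and to show that in each of the exhaustive cases there the non-simplicial three-dimensional face is spanned by precisely four explicit first independent sets, with no room for a fifth.

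First I would observe that since $\tau\preceq\sigma_G$ is non-simplicial of dimension three, $\tau$ must contain at least four extremal rays; in particular there is a pair of extremal rays $\{\mathfrak{i},\mathfrak{i}'\}\subseteq\tau$ corresponding to first independent sets $I,I'\in\Ione$ such that $(I,I')\notin\Itwo$, i.e.\ $\G\{I\}\cap\G\{I'\}$ already has (at least) four connected components. By Theorem~\ref{facetheorem} applied to the three-dimensional face $\tau=H_{\Val_S}\cap\sigma_G$, the common intersection of associated subgraphs of all generators of $\tau$ has exactly four connected components. Consequently any additional generator $\mathfrak{i}''\in\tau^{(1)}$ must satisfy $\G\{I\}\cap\G\{I'\}\subseteq\G\{I''\}$ (by Corollary~\ref{tatli}), so the extremal rays of $\tau$ are exactly the images under $\pi$ of those first independent sets $J$ whose associated subgraphs contain $\G\{I\}\cap\G\{I'\}$.

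Next I would run through the (disjoint) possibilities for the pair $(I,I')$ of types $(A,A')$, $(A,B)$, $(A,C)$, $(B,C)$, $(C,C')$ using the classification from Section~\ref{twofacesection}. By Lemma~\ref{magicalcc}, the Type~$\casei$ case for $(C,C')$ already forces $\TV(G)$ to be non-rigid via an explicit non-simplicial three-dimensional face, but in the present theorem one only needs the combinatorial conclusion: in each subcase of Lemmas~\ref{3faceAA}(2), \ref{3faceAB}(2), Proposition~\ref{3faceCC} and Corollary~\ref{3faceAC}, the proofs there already exhibit exactly two further first independent sets whose associated subgraphs are supergraphs of $\G\{I\}\cap\G\{I'\}$, producing a tuple of cardinality four in $\Ithree$. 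What I need to add is the reverse containment: no \emph{fifth} first independent set $J$ can satisfy $\G\{I\}\cap\G\{I'\}\subseteq\G\{J\}$.

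To check this uniqueness, I would work case-by-case on the isolated-vertex/connected-component structure of the four components of $\G\{I\}\cap\G\{I'\}$: in each configuration the candidates for $J\in\Ione$ are determined by which of the four components get grouped together as the two components of $\G\{J\}$, and the connectedness conditions from Definition~\ref{firstindependentsets} together with the maximality requirement in $\mathcal{I}_G^{(*)}$ rule out all regroupings except exactly the four found in the corresponding lemma. For example, for the pair $(A,A')\notin\Itwo$ of Lemma~\ref{3faceAA}(2)$\casei$, the only two-component groupings of the four components of $\G[A\cap A']\sqcup\{a\}\sqcup\{a'\}\sqcup\{b\}$ satisfying the spanning-subgraph condition are precisely $\G\{A\}$, $\G\{A'\}$, $\G\{B\}$ and $\G\{C\}$; and similarly in each remaining case. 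The main obstacle will be handling the Type~$\caseii$–$\casev$ situations of Proposition~\ref{3faceCC} efficiently, where the four components have more internal structure and one must carefully use that $G$ is connected and that $I,I'$ were already maximal two-sided independent sets to kill the spurious candidates; this is the combinatorial bookkeeping step that consumes most of the work, but no new idea beyond the lemmas above is needed.
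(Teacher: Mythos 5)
Your proposal follows the same route as the paper's own proof: both reduce to the classification of non-$2$-face pairs contained in a three-dimensional face carried out in Lemmas~\ref{3faceAA}, \ref{3faceAB}, \ref{magicalcc} and Proposition~\ref{3faceCC}, each of which exhibits the four generators of the non-simplicial $3$-face. The additional ``no fifth generator'' step you sketch, via Corollary~\ref{tatli} and the fact that $\G\{I\}\cap\G\{I'\}$ already has exactly four connected components, is a worthwhile clarification of a point the paper leaves implicit in those lemmas.
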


\begin{proof}
It follows by \hyperref[3faceAA]{ Lemma \ref*{3faceAA}} $(2)$$\casei, \caseii$, \hyperref[3faceAB]{ Lemma \ref*{3faceAB}} $(1) \caseii$ and $(2)$, \hyperref[magicalcc]{ Lemma \ref*{magicalcc}}, and \hyperref[3faceCC]{Proposition \ref*{3faceCC}}.
\end{proof}

\begin{thm}\label{4tuplenonrigid}
Let $G \subseteq K_{m,n}$ be a connected bipartite graph. Assume that the edge cone $\sigma_G$ admits a three-dimensional non-simplicial face. Then $\TV(G)$ is not rigid.
\end{thm}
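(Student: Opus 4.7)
The plan is to combine Theorem \ref*{existenceofadeg} with Proposition \ref*{nonrigid}: the former guarantees that a non-simplicial three-dimensional face $\tau \preceq \sigma_G$ is generated by exactly four extremal rays $\mathfrak{r}_1, \mathfrak{r}_2, \mathfrak{r}_3, \mathfrak{r}_4$ (so $\tau$ is a cone over a quadrilateral), while the latter reduces the non-rigidity of $\TV(G)$ to the non-rigidity of $\TV(\tau)$. First I would run through the combinatorial types of non-simplicial 3-face enumerated in Lemma \ref*{3faceAA}(2), Lemma \ref*{3faceAB}(1)(ii) and (2), Lemma \ref*{magicalcc}, and Proposition \ref*{3faceCC}, and verify that in each case the four extremal rays satisfy a balanced linear relation
\[
\mathfrak{r}_1 + \mathfrak{r}_2 - \mathfrak{r}_3 - \mathfrak{r}_4 \;=\; 0 \quad \text{in } N,
\]
whose coefficients sum to zero. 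The verification is short and uniform: use $\mathfrak{a}=e_a$, $\mathfrak{b}=f_b$, and $\mathfrak{c}=\sum_{i\notin C_1}e_i - \sum_{j\in C_2}f_j$ together with the identification $\sum_{i\in U_1}e_i = \sum_{j\in U_2}f_j$ in $N$. For example, Lemma \ref*{3faceAA}(2)(i) gives $\mathfrak{a}+\mathfrak{a}' = \mathfrak{b}+\mathfrak{c}$ immediately; Lemma \ref*{magicalcc} gives $\mathfrak{c}+\mathfrak{c}^{i,j}=\mathfrak{c}^i+\mathfrak{c}^j$; Lemma \ref*{3faceAB}(2) gives $\mathfrak{c}+\mathfrak{c}' = \mathfrak{a}+\mathfrak{b}$; the remaining cases are analogous.

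Next, because the coefficients sum to zero, the system $\langle R, \mathfrak{r}_i \rangle = 1$ ($i=1,\dots,4$) is consistent, so I would choose $R \in \tau^\vee \cap M$ realizing it. The compact part $Q_\tau(R)^c$ is then a quadrilateral all four of whose vertices are lattice points. By Theorem \ref*{smooth2co}, $\TV(G)$ is smooth in codimension $2$, and the same holds for the face $\tau$ since its $2$-faces are $2$-faces of $\sigma_G$; hence Theorem \ref*{altmann96} applies. The four compact edges $d^1, d^2, d^3, d^4$ of the quadrilateral satisfy the single cycle relation $\sum_i d^i = 0$ in the two-dimensional slice $[R=0] \cap \mathrm{span}(\tau)$, so $V(R)$ has dimension $4-2=2$, and no further constraint is imposed because every vertex is lattice. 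Therefore
\[
\dim T^1_{\TV(\tau)}(-R) \;=\; \dim\bigl(V(R)/\mathbb{C}(1,1,1,1)\bigr) \;\geq\; 1,
\]
or equivalently the quadrilateral admits a non-trivial Minkowski decomposition into two line segments. Proposition \ref*{nonrigid} then yields the non-rigidity of $\TV(G)$.

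The main obstacle is the case-by-case verification of the balanced (Gorenstein) relation among the four extremal rays: although each individual check reduces to an elementary set-theoretic computation, every combinatorial type from Lemmas \ref*{3faceAA}--\ref*{magicalcc} and Proposition \ref*{3faceCC} must be handled in turn, and the bookkeeping with two-sided first independent sets $C=C_1\sqcup C_2$ and $C'=C'_1\sqcup C'_2$ requires care to avoid missing any of the five subtypes listed before Lemma \ref*{ccconditions}.
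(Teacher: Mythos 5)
Your strategy — reduce to the non-simplicial quadrilateral face $\tau$ via Proposition \ref{nonrigid}, then find a degree $R$ in which all four generators of $\tau$ are lattice vertices, so that the single cycle relation gives $\dim V(R)/\mathbb{C}(1,\dots,1)=1$ — is the same strategy the paper uses, and your balanced relations $\mathfrak{r}_1+\mathfrak{r}_2=\mathfrak{r}_3+\mathfrak{r}_4$ are correct (they do hold once one uses $\sum_{i\in U_1}e_i=\sum_{j\in U_2}f_j$ in $N$).

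The gap is the sentence ``because the coefficients sum to zero, the system $\langle R,\mathfrak{r}_i\rangle=1$ is consistent, so I would choose $R\in\tau^\vee\cap M$ realizing it.'' Consistency over $\QQ$ does not give a solution in the lattice $M$. The balanced relation together with smoothness in codimension $2$ is not sufficient for an integral $R$ to exist: take $\mathfrak{r}_1=(1,0,0)$, $\mathfrak{r}_3=(0,1,0)$, $\mathfrak{r}_2=(2,3,3)$, $\mathfrak{r}_4=(3,2,3)$ in $N'\cong\ZZ^3$. Then $\mathfrak{r}_1+\mathfrak{r}_2=\mathfrak{r}_3+\mathfrak{r}_4$, the quadrilateral cone has boundary $2$-faces $\{1,3\},\{3,2\},\{2,4\},\{4,1\}$, each smooth (every cross product is primitive), yet the unique rational solution of $\langle R,\mathfrak{r}_i\rangle=1$ is $R=(1,1,-4/3)\notin\ZZ^3$. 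So the existence of the lattice degree is a genuine additional fact about the specific rays arising from first independent sets, not a formal consequence of the relation.

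This is exactly what the paper's proof supplies: for each combinatorial type from Lemma \ref{3faceAA}(2), Lemma \ref{3faceAB}(1)(ii) and (2), and Proposition \ref{3faceCC}, it writes down an explicit $R\in M$ (for instance $e^a+e^{a'}+f^b+f^{b'}$ or $e^a+f^b$) and one checks directly that $\langle R,\mathfrak{r}_i\rangle=1$ on all four generators and that $R$ lies in $\ZZ^{m+n}\cap\overline{(1,-1)}^\perp$. (The Lemma \ref{magicalcc} case is already handled inside that lemma.) So your proposed case-by-case verification must do more than confirm the balanced relation; it must exhibit the integral degree in each case, which is precisely the content of the paper's case list. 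Apart from this point, the remainder of your argument — using Theorem \ref{smooth2co} to invoke Theorem \ref{altmann96} and reading off a one-parameter Minkowski decomposition of the parallelogram — is correct.
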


\begin{proof}
Let $\G[S] \subsetneq G$ be the intersection subgraph associated to the non-simplicial three-dimensional face $\tau$ generated by $\pi(S)$ as defined in  \hyperref[facetheorem]{ Lemma \ref*{facetheorem}}. We have also proven that $\tau = H_{\Val_S} \cap \sigma_G$, where $\Val_{S}$ is the degree sequence of the graph $\G[S]$. Since $\Val_{S} \in \sigma^{\vee}_G$, the lattice point $-\Val_S$ evaluates negative on every extremal ray except the extremal ray generators of $\tau$. Hence, we consider the deformation degree $R'= R + k (-\Val_S) \in M$ for $k\gg0$ where $[R,\tau] \geq 1$. Thus, the compact part of the crosscut $Q(R')$ consists of $\tau$. We are now reduced to examine the non-simplicial 3-faces from  \hyperref[3faceAA]{ Lemma \ref*{3faceAA}} $(2)$$\casei, \caseii$, \hyperref[3faceAB]{ Lemma \ref*{3faceAB}} $(2)$, and \hyperref[3faceCC]{Proposition \ref*{3faceCC}}. For each case, by \hyperref[nonrigid]{Proposition \ref*{nonrigid}}, it is sufficient to show that there exists a deformation degree $R \in M$ such that the extremal rays $\pi(S)$ are lattice vertices in $R$. We find such deformation degrees as following: \\
\noindent \underline{\hyperref[3faceAA]{ Lemma \ref*{3faceAA}}}\\
$(2) \casei$: $e^a + e^{a'} + f^{b} + f^{b'}$, where $b\neq b'$.\\
$(2) \caseii$: $e^a + e^{a'} + f^{b} + f^{b'}$, where $b \in C_2$ and $b' \in C'_2$.\\ 
\noindent \underline{\hyperref[3faceAB]{ Lemma \ref*{3faceAB}}}\\
$(1) \caseii$:  $e^a + e^{a'} + f^{b} + f^{b'}$, where $b \in N(C_2)$ and $b' \in N(C'_1)$. \\
$(2)$: $e^a + f^b$.\\
\noindent \underline{\hyperref[3faceCC]{ Proposition \ref*{3faceCC}}}\\
$(2) \casei$: $e^a +f^b$, where $b \in U_2 \backslash (C_2 \sqcup C'_2)$.\\
$(2) \caseii$: $e^a + e^{a'} +f^b + f^{b'}$, where $a \in N(C'_2)$, $a' \in C'_1$, $b \in \mathcal{C}_2 \backslash (C_2 \cup C'_2)$, and $b' \in \mathcal{C}' \backslash (C_1 \cup C'_1)$. \\
$(2) \caseiii$: $e^a + e^{a'} +f^b + f^{b'}$, where $a \in N(C'_2)$, $a' \in N(C_2)$, and $b' \in U_2 \backslash C''$. \\
$(3)$: $e^a  +f^b$, where $a \in N(\mathcal{C}'_2)$ and $b \in N(\mathcal{C}_1)$.
\end{proof}

\subsection{Examples of pairs of first independent sets not spanning a two-dimensional face}

As noted in \hyperref[prooftechnique]{Remark \ref*{prooftechnique}}, it is crucial to examine the non 2-faces of $\sigma_G$ while investigating the rigidity of $\TV(G)$. In the next example, we explain the challenge about using this technique for a general bipartite graph $G$.

\begin{ex}\label{whynot}
Let $G \subsetneq K_{m,n}$ be a connected bipartite graph and let $A=U_1 \backslash \{a\}$ and $A' = U_1 \backslash \{a'\}$ be two first independent sets. Assume that $(A,A') \notin \Ione$ and the edge cone $\sigma_G$ does not have any non-simplicial three-dimensional face. By \hyperref[aaconditions]{Proposition \ref*{aaconditions}} and \hyperref[3faceAA]{Lemma~\ref*{3faceAA}}, the induced subgraph $\G[(U_1 \backslash \{a,a'\}) \sqcup U_2 ]$ has $k$ connected components where $k \geq 3$. If this induced subgraph has isolated vertices, say the set $Y \subsetneq U_2$ as in the first figure, then we obtain the maximal independent set $(A \cap A' ) \sqcup Y$. This maximal independent set is not a first independent set, unless $\G[A\cap A' \sqcup (U_2 \backslash Y)]$ is connected. However, even if this induced subgraph is connected, there might exist another first independent set, say $C \in \Ione$ with $C_1 \subsetneq A\cap A'$ and $C_2 \subsetneq U_2 \backslash Y$. This possibility makes the investigation iterative and hard to control. 

\begin{center}
\begin{minipage}[b]{0.27\linewidth}
\begin{tikzpicture}[baseline=1cm,scale=0.7,every path/.style={>=latex},every node/.style={draw,circle,fill=black,scale=0.6}]
%\draw (0,8) -- (1,8) -- (1,7) -- (0,7) -- (0,8) ;
%\draw (0,3) -- (1,3) -- (1,2) -- (0,2) -- (0,2) ;
\draw (0,2) -- (0,8) -- (1,8) -- (1,2) -- (0,2) ;
\filldraw[fill=yellow] (0,5) -- (1,5) -- (1,4) -- (0,4) -- (0,5) ;

\node       [fill=none,scale =1.2,draw=none]    (x1) at (0.5,4.5)  {$C_1$}; 
%\node       [fill=none,scale =1.2,draw=none]    (x2) at (0.5,2.5)  {$X_{k'}$}; 
\node       [left,draw=none,circle,fill=none,scale =1.5]    (a) at (0.5,1)  {$a$}; 
\node      [left,draw=none,circle,fill=none,scale =1.5]     (b) at (0.5,0)  {$a'$}; 
\node       [draw=none,circle,fill=none,scale =1.5]    (a1) at (0.5,1)  {$\bullet$}; 
\node      [draw=none,circle,fill=none,scale =1.5]     (b1) at (0.5,0)  {$\bullet$}; 
%\node       [draw=none,fill=none]     (p1) at (0.5,4)  {$\bullet$}; 
%\node       [draw=none,fill=none]     (p2) at (0.5,5)  {$\bullet$}; 
%\node       [draw=none,fill=none]     (p3) at (0.5,6)  {$\bullet$}; 

\draw (4,2) -- (4,8) -- (5,8) -- (5,2) -- (4,2) ;
\filldraw[fill=yellow] (4,5) -- (5,5) -- (5,4) -- (4,4) -- (4,5) ;

\node       [fill=none,scale =1.2,draw=none]    (y1) at (4.5,4.5)  {$C_2$}; 
%\node       [fill=none,scale =1.2,draw=none]    (y2) at (4.5,2.5)  {$Y_{k'}$}; 
\node       [draw=none,fill=none,scale=1.5]      (bul1) at (4.5,1)  {$\bullet$}; 
\node       [draw=none,fill=none,scale=1.5]      (bul2) at (4.5,0.8)  {.}; 
\node       [draw=none,fill=none,scale=1.5]      (bul3) at (4.5,0.55)  {.}; 
\node       [draw=none,fill=none,scale=1.5]      (bul4) at (4.5,0.23)  {.}; 
\node       [draw=none,fill=none,scale=1.5]      (bul5) at (4.5,0)  {$\bullet$}; 
\end{tikzpicture}
\end{minipage}
\hspace{2.5cm}
\begin{minipage}[b]{0.27\linewidth}
\begin{tikzpicture}[baseline=1cm,scale=0.7,every path/.style={>=latex},every node/.style={draw,circle,fill=black,scale=0.6}]
\filldraw[fill=yellow] (0,8) -- (1,8) -- (1,7) -- (0,7) -- (0,8) ;
\draw (0,3) -- (1,3) -- (1,2) -- (0,2) -- (0,2) ;
\draw (0,2) -- (0,8) -- (1,8) -- (1,2) -- (0,2) ;
\draw (0,2) -- (0,7) -- (1,7) -- (1,2) -- (0,2) ;

\node       [fill=none,scale =1.2,draw=none]    (x1) at (0.5,7.5)  {$X_1$}; 
\node       [fill=none,scale =1.2,draw=none]    (x2) at (0.5,2.5)  {$X_{k'}$}; 
\node      [left,draw=none,fill=none,scale =1.5]   (a1) at (0.5,1)  {$a$}; 
\node      [left,draw=none,fill=none,scale =1.5]     (b1) at (0.5,0)  {$a'$}; 
\node      [draw=none,circle,fill=none,scale =1.5]   (a) at (0.5,1)  {$\bullet$}; 
\node      [draw=none,circle,fill=none,scale =1.5]     (b) at (0.5,0)  {$\bullet$}; 
\node       [draw=none,fill=none]     (p1) at (0.5,4)  {$\bullet$}; 
\node       [draw=none,fill=none]     (p2) at (0.5,5)  {$\bullet$}; 
\node       [draw=none,fill=none]     (p3) at (0.5,6)  {$\bullet$}; 

\draw (4,8) -- (5,8) -- (5,7) -- (4,7) -- (4,8) ;
\filldraw [draw=black] (4,3) -- (5,3) -- (5,2) -- (4,2) -- (4,2) ;
\draw (4,2) -- (4,8) -- (5,8) -- (5,2) -- (4,2) ;
\filldraw[fill=yellow] (4,2) -- (4,7) -- (5,7) -- (5,2) -- (4,2) ;

\node       [fill=none,scale =1.2,draw=none]    (y1) at (4.5,7.5)  {$Y_1$}; 
\node       [fill=none,scale =1.2,draw=none]    (y2) at (4.5,2.5)  {$Y_{k'}$}; 
\node       [draw=none,fill=yellow,scale=1.5]      (bul1) at (4.5,1)  {$\bullet$}; 
\node       [draw=none,fill=none,scale=1.5]      (bul2) at (4.5,0.8)  {.}; 
\node       [draw=none,fill=none,scale=1.5]      (bul3) at (4.5,0.55)  {.}; 
\node       [draw=none,fill=none,scale=1.5]      (bul4) at (4.5,0.23)  {.}; 
\node       [draw=none,fill=yellow, scale=1.5]      (bul5) at (4.5,0)  {$\bullet$}; 

  \draw[-] (x1) edge (y1);
%    \draw[-] (0.5,4.5) edge (4.5,5);
 \draw[-] (x2) edge (y2);
  \draw[-] (a) edge (4.5,6.5);
  \draw[-] (a) edge (y2);
%    \draw[-] (a) edge (4.5,6);
  \draw[-] (b) edge (4.5,5);
  \draw[-] (b) edge (y2);
%    \draw[-] (b) edge (4.5,4);
      \draw[-] (a) edge (bul1);
  \draw[-] (a) edge (bul5);
    \draw[-] (b) edge (bul1);
        \draw[-] (b) edge (bul5);

\end{tikzpicture}
\end{minipage}
\end{center}
\vspace{0.2cm}
\noindent Another possibility is that $\G[A\cap A' \sqcup (U_2 \backslash Y)]$ has more than 2 connected components. This means that there exist disjoint vertex sets $X_i \subsetneq A\cap A'$ and $Y_i \subsetneq U_2 \backslash Y$ where $\G[X_i \sqcup Y_i]$ is connected as illustrated in the second figure. Since $\G\{A\}$ and $\G\{A'\}$ have two connected components, we obtain the first independent sets $C^i := X_i \sqcup (U_2 \backslash Y_i)$. A pair $(C^i, C^j)$ is of Type $\caseiv$ and does not form a 2-face. Let $R = e^a + e^{a'} - e^{x_i} - e^{x_j} \in M$ where $x_i \in X_i$ and $x_j \in X_j$ and we consider the crosscut $Q(R)$. Although $\G[X_i \sqcup Y_i]$ is  connected, as in the previous situation there might exists an independent set $D$ with $D^i_1 \subsetneq X_i$, $D^i_2 \subsetneq Y_i$ and $x_i \in D^i_1$. Moreover there might exist first independents set $\mathcal{D}^i := D^{i}_1 \sqcup D^{i}_2 \sqcup C_2^i$. We observe that $(\mathcal{D}^i,C^i)$ of Type $\casei$ forms a 2-face, otherwise by \hyperref[magicalcc]{Lemma \ref*{magicalcc}}, $\sigma_G$ has non-simplicial three-dimensional faces. However $(\mathcal{D}^i,C^j)$ is of Type $\caseiv$ and does not form a 2-face. Furthermore, there cannot exist any first independent set containing both $X_i$ and $Y_i$. Hence we obtain that $T^1(-R) \neq 0$ for this possibility. However, for rigidity, one needs to examine all non 2-face pairs, e.g.\ $(\mathcal{D}^i,C^j)\notin \Itwo$.     \\
\begin{center}
\begin{tikzpicture}[baseline=1cm,scale=0.7,every path/.style={>=latex},every node/.style={draw,circle,fill=black,scale=0.3}]
  \node      [left,draw=none, circle, fill=none, scale=3]     (d1) at (-5,-1)  {$\overline{\mathfrak{d}^i}$};
    \node      [right,draw=none, circle, fill=none, scale=3]      (d2) at (5,-1)  {$\overline{\mathfrak{d}^j}$};
      \node        [draw=black, circle, scale=1]    (d1) at (-5,-1)  {$\overline{\mathfrak{d}^i}$};
    \node       [draw=black, circle, scale=1]      (d2) at (5,-1)  {$\overline{\mathfrak{d}^j}$};
  \node        [left,draw=none, circle, fill=none, scale=3]      (b) at (-3,2)  {$\overline{\mathfrak{a}}$};
    \node        [draw=black, circle, scale=1]      (b) at (-3,2)  {$\overline{\mathfrak{a}}$};
  \node       [right,draw=none, circle, fill=none, scale=3]        (d) at (3,2) {$\overline{\mathfrak{a}'}$};
  \node     [draw=black, circle, scale=1]          (d) at (3,2) {$\overline{\mathfrak{a}'}$};
  \node      [left,draw=none, circle, fill=none, scale=3]       (f) at (-2,-3) {$\overline{\mathfrak{c}^i}$};
  \node       [right,draw=none, circle, fill=none, scale=3]      (g) at (2,-3) {$\overline{\mathfrak{c}^j}$};
  \node        [draw=black, circle, scale=1]      (f) at (-2,-3) {$\overline{\mathfrak{c}^i}$};
  \node         [draw=black, circle, scale=1]      (g) at (2,-3) {$\overline{\mathfrak{c}^j}$};

     \draw[-,cyan!40] (d1) edge (b);
         \draw[-,very thick, green] (d1) edge (f);
     \draw[-,cyan!40] (d1) edge (d);
      \draw[-,cyan!40] (d2) edge (b);
      \draw[-,cyan!40] (d2) edge (d);
           \draw[-,very thick, green] (d2) edge (g);
                      
    \draw[dashed,red] (d) edge (b);
    \draw[dashed,red] (d1) edge (g);
    \draw[dashed,red] (d2) edge (f);

    \draw[-,very thick, green] (b) edge (g);
    \draw[-,very thick, green] (d) edge (g);
    \draw[dashed,red] (f) edge (g);
  \draw [-,very thick, green] (f) edge (b);
 \draw [-,very thick, green] (f) edge (d);

\end{tikzpicture}
\vspace{0.3cm}
\end{center}

\end{ex}

\vspace{1cm}

\noindent We observe that as long as we know more information about the bipartite graph $G$, it is more probable that we are able to determine the rigidity of $\TV(G)$. In this manner, we study the edge cones associated to so-called toric matrix Schubert varieties in \cite{portakal2}. After examining their face structure, we are able to classify the rigid toric matrix Schubert varieties.

\section {Rigidity of complete bipartite graphs with multiple edge removals}

In this section, we would like to apply the results from \hyperref[generalch]{Section \ref*{generalch}} to the complete bipartite graphs with multiple edge removals. In particular, this section generalizes the results in \cite{herzog2015}, where the complete bipartite graphs with one edge removal were studied. First we prove in \hyperref[rigidcomplete]{Theorem \ref*{rigidcomplete}} that
$\TV(K_{m,n})$ is rigid except for $m=n = 2$.  Next, we consider two vertex sets $C_1 \subsetneq U_1$ and $C_2 \subsetneq U_2$ of the complete bipartite graph $K_{m,n}$ and we remove all the edges between these two sets. This means that we obtain a two-sided first independent set $C:=C_1 \sqcup C_2 \in \Ione$. Without loss of generality, we assume that $C_1= \{1,\ldots,t_1\}$ and $C_2=\{m+1,\ldots,m+t_2\}$ and therefore $\pi(C)=\mathfrak{c} = \sum_{i>t_1} e_i - \sum_{j\leq t_2} f_j$ under the map from \hyperref[11thm]{Theorem \ref*{11thm}}. In \hyperref[onetwosided]{Theorem \ref*{onetwosided}}, we prove that $\TV(G)$ is rigid except the cases where $|C_1|=1$ and $|C_2| = n-2$ or $|C_1|=m-2$ and $|C_2|=1$.
\subsection{Complete bipartite graphs}
Let us first study the case with no edge removals i.e.\ the determinantal singularity $\TV(K_{m,n})$.  The toric variety $\TV(K_{m,n})$ is the affine cone over a Segre variety which is the image of the embedding $\mathbb{P}^{m-1} \times \mathbb{P}^{n-1} \longrightarrow \mathbb{P}^{mn-1}$. It is a famous result by Thom, Grauert-Kerner and Schlessinger as in \cite{segreler} that it is rigid whenever $m\geq 2$ and $n \geq 3$. Note that $\TV(K_{m,n})$ is $\mathbb{Q}$-Gorenstein and $\QQ$-factorial in codimension 3 for $m=n$. By Corollary 6.5.1 in \cite{minkowskialtmann}, it follows that $\TV(K_{m,m})$ is rigid. We prove this classical result combinatorially with graphs also for $m\neq n$. Note that if $m=1$ or $n=1$ then $K_{m,n}$ is a tree and hence $\TV(K_{m,n})$ is smooth and rigid. Therefore we consider the cases with $m\geq2$ and $n\geq 2$.\\

\noindent First, we collect some facts about the faces of the edge cone $\sigma_{K_{m,n}}$. 
\begin{prop}
The edge cone $\sigma_{K_{m,n}} \subset N_{\QQ}$ is generated by the extremal ray generators $e_1, \ldots, e_m, f_1, \ldots, f_n $. 
\end{prop}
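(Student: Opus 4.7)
The plan is to derive this immediately from the face-characterization machinery of Section \ref{facechar}. Since every vertex of $U_1$ is adjacent in $K_{m,n}$ to every vertex of $U_2$, any nonempty subset of $U_1$ together with any nonempty subset of $U_2$ contains an edge. Hence $K_{m,n}$ admits no two-sided independent set, so every first independent set in $\mathcal{I}_{K_{m,n}}^{(1)}$ must be one-sided of the form $U_i\setminus\{v\}$ for some $v\in U_i$ (and automatically not contained in any two-sided independent set).

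Next I would verify that each such candidate is indeed in $\mathcal{I}_{K_{m,n}}^{(1)}$. For $A=U_1\setminus\{i\}$ with $m\geq 2$ and $n\geq 2$, we have $N(A)=U_2$, so the associated subgraph $\G\{A\}$ splits as the connected complete bipartite graph $K_{m-1,n}$ on $A\sqcup U_2$ together with the isolated vertex $\{i\}$. This is exactly two connected components, so $A\in\mathcal{I}_{K_{m,n}}^{(1)}$, and by Lemma \ref{lemonesid} its associated extremal ray is $\pi(A)=e_i$. The symmetric argument shows $U_2\setminus\{j\}\in\mathcal{I}_{K_{m,n}}^{(1)}$ with $\pi(U_2\setminus\{j\})=f_j$ for each $j\in[n]$.

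Finally, I would invoke the bijection $\pi\colon \mathcal{I}_{K_{m,n}}^{(1)}\to\sigma_{K_{m,n}}^{(1)}$ of Theorem \ref{11thm}. Since the domain consists precisely of the $m+n$ one-sided independent sets listed above, the extremal ray generators of $\sigma_{K_{m,n}}$ are exactly $e_1,\ldots,e_m,f_1,\ldots,f_n$. There is no real obstacle here: the content of the statement is entirely absorbed by the absence of two-sided independent sets in $K_{m,n}$, which collapses the classification in Section \ref{facechar} to the one-sided case.
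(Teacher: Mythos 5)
Your proposal is correct and takes essentially the same route as the paper: observe that $K_{m,n}$ has no two-sided independent set, so every first independent set is $U_i\setminus\{v\}$, each of which has $\G\{U_i\setminus\{v\}\}$ with exactly two connected components, and then invoke the bijection of Theorem~\ref{11thm} together with Lemma~\ref{lemonesid} to identify the rays as $e_i,f_j$. The paper's proof is just a terser statement of exactly these two facts.
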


\begin{proof}
The complete bipartite graph has no edge removals, therefore it has no two-sided first independent set. The associated subgraph $\G \{U_i \backslash \{u\}\}$ has two connected components for each $u\in U_i$ and $i=1,2$. 
\end{proof}

\begin{ex}\label{smallcompletes}
Let us study the small examples $K_{2,2}$, $K_{2,3}$, and $K_{3,3}$ which will be excluded in \hyperref[completefaces]{Proposition \ref*{completefaces}}. The three-dimensional edge cone $\sigma_{K_{2,2}}$ is generated by the extremal rays $e_1,e_2,f_1,f_2$  where $(e_1,e_2)$ and $(f_1,f_2)$ do not span a 2-face. For $K_{2,3}$ we observe that the intersection graphs $\G\{U_1 \backslash \{1\}\} \cap \G\{U_1 \backslash \{2\}\}$ and $\G\{U_2 \backslash \{3\}\} \cap \G\{U_2 \backslash \{4\}\} \cap \G\{U_2 \backslash \{5\}\}$ have five isolated vertices and therefore $(e_1,e_2)$ does not span a 2-face and $(f_1,f_2,f_3)$ does not span a 3-face. The second figure is the combinatorial representation of the four dimensional cone $\sigma_{K_{2,3}}$.

\begin{center}

\begin{tikzpicture}[baseline=1,scale=1.5,every path/.style={>=latex},every node/.style={draw,circle,fill=white,scale=0.6}]
\tdplotsetmaincoords{390}{-20}
\tdplotsetrotatedcoords{0}{0}{50}

\begin{scope}[scale=2.5,tdplot_main_coords,every node/.style={draw=none,fill=none},xshift=45, yshift=0cm]

%set up some coordinates 
%-----------------------

\coordinate (O) at (0,0,0);
\coordinate (a) at (0.5,0,0) ;
\coordinate (b) at (0,0.5,0);
\coordinate (c) at (0.5,0.5,-0.5);
\coordinate (d) at (0,0,0.5);

%determine a coordinate (P) using (r,\theta,\phi) coordinates.  This command
%also determines (Pxy), (Pxz), and (Pyz): the xy-, xz-, and yz-projections
%of the point (P).
%syntax: \tdplotsetcoord{Coordinate name without parentheses}{r}{\theta}{\phi}

%draw figure contents
%--------------------

%draw the main coordinate system axes

\draw[thick,->,gray] (0,0,0) -- (0.8,0,0) node[anchor=north east]{};
\draw[thick,->,gray] (0,0,0) -- (0,0.85,0) node[anchor=north west]{};
\draw[dashed,->,gray] (0,0,0) -- (0,0,-0.5) node[anchor=south]{};
\draw[thick,->,gray] (0,0,0) -- (0,0,1) node[anchor=south]{};
\draw[thick,->,black] (0,0,0) -- (0.85,0.85,-0.85) node[anchor=south]{};

\tdplotsetrotatedcoords{60}{30}{10}
%draw a vector from origin to point (P) 
\draw[-stealth,ultra thick, color=black] (O) -- (a);
\draw[-stealth,ultra thick, color=black] (O) -- (b);
\draw[-stealth, ultra thick, color=black] (O) -- (c);
\draw[-stealth,ultra thick, color=black] (O) -- (d);

\node (O1) at (0,0,0) {};
\node (a1) at (0.55,0,0) {$f_1$};
\node (b1) at (-0.05,0.55,0) {$f_2$};
\node (c1) at (0.55,0.55,-0.55) {$e_1$};
\node (d1) at (0,-0.14,0.55) {$e_2$};
\node (e1) at (0.2,0.1,-0.7) {\textbf{$\sigma_{K_{2,2}}$}};

%draw the angle \phi, and label it
%syntax: \tdplotdrawarc[coordinate frame, draw options]{center point}{r}{angle}{label options}{label}
%\tdplotdrawarc[fill=green]{(O)}{0.5}{0}{90}{anchor=north}{$\sigma_0$}
\draw [white,ultra thick] (a) -- (c) node [above,pos=.5,opacity=1] {};
\draw [black!60!green, very thin] (0.25,0,0)--(0.25,0.25,-0.25);
\draw [black!60!green, very thin] (0.375,0,0)--(0.375,0.375,-0.375);
\draw [black!60!green, very thin] (0.125,0,0)--(0.125,0.125,-0.125);
\draw [black!60!green, very thin] (0.625,0,0)--(0.625,0.625,-0.625);
\draw [black!60!green, very thin] (0.75,0,0)--(0.75,0.75,-0.75);

\draw [white,ultra thick] (c)--(d);
\draw [white,ultra thick] (0.5,0.25,-0.25)--(0.25,0,0.25);
\draw [white,ultra thick] (0.5,0.375,-0.375)--(0.125,0,0.375);
\draw [white,ultra thick] (0.25,0.5,-0.25)--(0,0.25,0.25);
\draw [white,ultra thick] (0.375,0.5,-0.375)--(0,0.125,0.375);

\draw [white,ultra thick] (b) -- (c) node [above,pos=.5,opacity=1] {};
\draw [black!30!red] (0,0.25,0) -- (0.25,0.25,-0.25) ;
\draw [black!60!red] (0,0.125,0) -- (0.125,0.125,-0.125) ;
\draw [black!30!red] (0,0.375,0) -- (0.375,0.375,-0.375) ;
\draw [black!60!red] (0,0.625,0) -- (0.625,0.625,-0.625) ;
\draw [black!30!red] (0,0.75,0) -- (0.75,0.75,-0.75) ;

\draw [white,ultra thick] (a) -- (d) node [above,pos=.5,opacity=1] {};
\draw [black!60!cyan, ultra thin] (0.25,0,0) -- (0,0,0.25)  ;
\draw [black!60!cyan, ultra thin] (0.125,0,0) -- (0,0,0.125)  ;
\draw [black!60!cyan, ultra thin] (0.375,0,0) -- (0,0,0.375)  ;
\draw [black!60!cyan, ultra thin] (0.625,0,0) -- (0,0,0.625)  ;
\draw [black!60!cyan, ultra thin] (0.75,0,0) -- (0,0,0.75)  ;

\draw [white,ultra thick] (d) -- (b) node [above,pos=.5,opacity=1] {};
\draw [white!10!cyan] (0,0,0.375) -- (0,0.375,0)  ;
\draw [white!10!cyan] (0,0,0.25) -- (0,0.25,0)  ;
\draw [black!10!cyan] (0,0,0.125) -- (0,0.125,0)  ;
%\draw [black!60!cyan!40] (1,0,0) -- (0,0.5,0)  ;
\draw [black!10!cyan] (0,0,0.625) -- (0,0.625,0)  ;
\draw [white!10!cyan] (0,0,0.75) -- (0,0.75,0)  ;

\end{scope}

\end{tikzpicture}
\hspace{1.5cm}
\begin{tikzpicture}[baseline=1,scale=1,every path/.style={>=latex},every node/.style={draw,circle,fill=black,scale=0.2}]
  \node      [draw=black, circle, fill=none, scale=3]      (a1) at (0,0)  {$f_1$};
  \node      [draw=black, circle, fill=none, scale=3]      (a2) at (1,2)  {$e_1$};
  \node      [draw=black, circle, fill=none, scale=3]      (a3) at (2.5,0.2)  {$f_2$};
  \node      [draw=black, circle, fill=none, scale=3]      (a4) at (1.5,-0.5)  {$f_3$};
  \node      [draw=black, circle, fill=none, scale=3]      (a5) at (1,-2)  {$e_2$};

 \node      [draw=none, fill=none, scale=5]      (a6) at (2,-1.5)  {};

 \draw[-,thick] (a1) edge (a3);
 \draw[-,thick] (a4) edge (a3);
 \draw[-,thick] (a4) edge (a1);
 \draw[-,thick] (a1) edge (a2);
 \draw[-,thick] (a4) edge (a2);
 \draw[-,thick] (a3) edge (a2);
 \draw[-,thick] (a1) edge (a5);
 \draw[-,thick] (a4) edge (a5);
 \draw[-,thick] (a3) edge (a5);

\end{tikzpicture}
\end{center}

\noindent Finally, consider the complete bipartite graph $K_{3,3}$. Similar to the calculation on $K_{2,3}$, we observe that $(e_1,e_2,e_3)$ and $(f_1,f_2,f_3)$ do not span 3-faces. Any other triple of extremal ray generators spans a 3-face.
\end{ex}

\begin{prop} \label{completefaces}
The two-dimensional faces of $\sigma_{K_{m,n}}$ are
\begin{enumerate}
\item[\normalfont{(1)}] all pairs except $(e_1,e_2)$, if $m=2$ and $n\geq 3$.
\item[\normalfont{(2)}] all pairs of extremal rays, if $m \geq 3$ and $n \geq 3$.
\end{enumerate} 
The three-dimensional faces of $\sigma_{K_{m,n}}$ are
\begin{enumerate}
\item[\normalfont{(1)}] all triples of extremal rays not containing both $e_1$ and $e_2$, if $m=2$, $n\geq 4$.
\item[\normalfont{(2)}] all triples of extremal rays except $(e_1,e_2,e_3)$, if $m=3$ and $n \geq 4$.
\item[\normalfont{(3)}] all triples of extremal rays, if $m \geq 4$ and $n\geq 4$.
\end{enumerate}
\end{prop}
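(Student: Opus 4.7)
The plan is to apply \hyperref[facetheorem]{Theorem \ref*{facetheorem}} directly: for any subset $S$ of first independent sets corresponding to extremal rays of $\sigma_{K_{m,n}}$, I count the number of connected components of the intersection $\bigcap_{A \in S} \G\{A\}$ and conclude that the associated rays span a $d$-dimensional face if and only if this count equals $d+1$. By the preceding proposition, the only first independent sets of $K_{m,n}$ are the one-sided sets $A_i := U_1 \backslash \{i\}$ for $i \in [m]$ and $B_j := U_2 \backslash \{m+j\}$ for $j \in [n]$. Since $K_{m,n}$ is complete bipartite, $N(A_i) = U_2$ and $N(B_j) = U_1$, so $\G\{A_i\}$ is the spanning subgraph obtained by deleting exactly the edges incident to vertex $i$ (leaving $i$ as an isolated vertex), and symmetrically for $\G\{B_j\}$.

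Next I would write down the intersections explicitly. For $I \subseteq [m]$ and $J \subseteq [n]$, put $S = \{A_i : i \in I\} \cup \{B_j : j \in J\}$. Then $\bigcap_{A \in S} \G\{A\}$ is the spanning subgraph whose edges are precisely those of the complete bipartite graph on $(U_1 \backslash I) \sqcup (U_2 \backslash (m+J))$, the vertices in $I \sqcup (m+J)$ being isolated. The component count splits into two cases: if $|I| < m$ and $|J| < n$, the bipartite part is a connected copy of $K_{m-|I|,\,n-|J|}$, giving exactly $|I| + |J| + 1$ components in total; otherwise the bipartite part is edgeless and all $m+n$ vertices are isolated.

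With this formula the proposition becomes a routine enumeration. For 2-faces one needs $|I| + |J| = 2$ together with $|I| < m$ and $|J| < n$: the pair $(e_i, e_{i'})$ is forbidden exactly when $m = 2$ and $(f_j, f_{j'})$ exactly when $n = 2$, so under $m = 2,\, n \geq 3$ only $(e_1,e_2)$ fails, and under $m \geq 3,\, n \geq 3$ every pair qualifies. For 3-faces one needs $|I| + |J| = 3$ with $|I| < m$ and $|J| < n$: triples with two $e$'s fail when $m = 2$, the triple $(e_1,e_2,e_3)$ fails when $m = 3$, and the symmetric exceptions on the $f$-side occur when $n \leq 3$; the hypotheses $n \geq 4$ in cases (1), (2) and $m, n \geq 4$ in case (3) eliminate those $f$-side exceptions, leaving precisely the families stated.

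No serious obstacle is expected here, since every $\G\{A_i\}$ and $\G\{B_j\}$ is a single-vertex deletion and the intersection formula above reduces the entire argument to straightforward counting in the complete bipartite graph.
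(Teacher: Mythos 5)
Your argument is correct. Where the paper's proof is a one-liner citing Proposition~\ref{aaconditions} for the pairs and Lemmas~\ref{3faceAA}(1), \ref{3faceAB}(1)(iii) for the triples, you instead apply Theorem~\ref{facetheorem} directly and exploit the special structure of $K_{m,n}$ to get a closed-form component count: for $S$ indexed by $I\subseteq[m]$, $J\subseteq[n]$, the intersection $\bigcap_{A\in S}\G\{A\}$ is $K_{m-|I|,\,n-|J|}$ together with $|I|+|J|$ isolated vertices, so it has $|I|+|J|+1$ components precisely when $|I|<m$ and $|J|<n$, and $m+n$ components otherwise. This makes the whole statement a one-line counting argument. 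The two routes rest on the same underlying theorem; what you buy by going direct is a self-contained, completely explicit proof for the complete bipartite case, whereas the paper's route leans on the general-purpose lemmas that it needs anyway for arbitrary $G$ (and which also track simpliciality, a concern that is vacuous here because $K_{m,n}$ has no two-sided first independent sets). One small point worth making explicit in a polished write-up: the fact that the $A_i$ and $B_j$ really are first independent sets (and that there are no two-sided ones) comes from the preceding proposition via the bijection of Theorem~\ref{11thm}; you invoke it but it deserves one sentence.
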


\begin{proof}
The characterization of the two-dimensional faces follows by \hyperref[aaconditions]{Proposition \ref*{aaconditions}}. Since there exists no two-sided first independent set, the characterization of the three-dimensional faces follows by \hyperref[3faceAA]{Lemma \ref*{3faceAA}} (1) and by \hyperref[3faceAB]{Lemma \ref*{3faceAB}} (1)(iii). 
\end{proof}

\begin{ex}\label{minirigidi}
Let us study the deformation space for the complete bipartite graphs from \hyperref[smallcompletes]{Example \ref*{smallcompletes}}. For $K_{2,2}$ and the deformation degree $R=[1,1,1,1] \in M$, the vertices of $Q(R)$ are all lattice vertices. This implies that $T_{\TV(K_{2,2})}^1(-R) \neq 0$. Next, let us consider the edge cone $\sigma_{K_{2,3}}$. It does not have any non-simplicial 3-face. It suffices to check the cases where the non 2-face pair $(e_1,e_2)$ or non 3-face triple $(f_1,f_2,f_3)$ is in the compact part of crosscut $Q(R)$. Suppose that $\overline{f_1},\overline{f_2},\overline{f_3}$ are vertices in $Q(R)$, for a deformation degree $R = [R_1, \ldots, R_5] \in M$. Then we obtain that $R_1 + R_2 \geq 3$. This means that there exists a non-lattice vertex $\overline{e_i} \in Q(R)$. Now suppose that $\overline{e_1}$ and $\overline{e_2}$ are vertices in $Q(R)$. Then we have that $R_3 + R_4 + R_5 \geq 2$ and thus there exists a non-lattice vertex $\overline{f_j}$ or there exist two lattice vertices $\overline{f_{k}}$ and $\overline{f_l}$ in $Q(R)$. In \hyperref[kucukrigidexs]{Figure \ref*{kucukrigidexs}}, these cases and their vector space $V(R)$ are illustrated. 
\begin{center}
\begin{tikzpicture}[scale=0.9,every path/.style={>=latex},every node/.style={draw,circle,fill=black,scale=0.4}]
  \node      [circle]      (a) at (-1,0)  {};
  \node      [right,draw=none, circle, fill=none,scale=2]      (at) at (-1,0)  {$\overline{f_3}$};
  \node      [draw=none, fill=none, scale=1]      (a1) at (-0.9,0.3)  {};
  \node      [draw=none, fill=none, scale=1]      (a2) at (-1.2,-1.3)  {};
  \node        [circle]      (b) at (-1.5,2)  {};
  \node        [left,draw=none, circle, fill=none,scale=2]    (bt) at (-1.5,2)  {$\overline{f_1}$};
  \node      [draw=none, fill=none, scale=1]      (b1) at (-1.2,1.7)  {};
  \node      [draw=none, fill=none, scale=1]      (b2) at (-1.7,0.5)  {};
  \node       [ circle]        (d) at (1.5,1.5) {};
  \node      [right,draw=none, circle, fill=none,scale=2]         (dt) at (1.5,1.5) {$\overline{f_2}$};
  \node      [draw=none, fill=none, scale=1]      (d1) at (1,1.4)  {};
  \node      [draw=none, fill=none, scale=1]      (d2) at (1.3,0)  {};
  \node       [circle]        (c) at (0.2,3.5) {};
  \node       [above,draw=none, circle, fill=none,scale=2]        (ct) at (0.2,3.5) {$\overline{e_i}$};

  \node      [draw=none, fill=none, scale=2]      (t1) at (-1.4,1)  {$t$};
  \node      [draw=none, fill=none, scale=2]      (t2) at (0.6,0.6)  {$t$};
  \node      [draw=none, fill=none, scale=2]      (t3) at (-0.3,1.3)  {$t$};
  \node      [draw=none, fill=none, scale=2]      (t4) at (-1,2.7)  {$t$};
  \node      [draw=none, fill=none, scale=2]      (t5) at (1.2,2.5)  {$t$};
  \node      [draw=none, fill=none, scale=2]      (t6) at (0,2)  {$t$};

 \draw[->,thick] (b) edge (b2);
 \draw[->,thick] (d) edge (d2);
 \draw[->,thick] (a) edge (a2);

    \draw[thick,green] (c) edge (b);
    \draw[thick,green] (a) edge (c);
    \draw[thick,green] (d) edge (c);

    \draw[-,green] (a) edge (b);
 \draw[dashed,very thick,red] (a1) edge (b1);
 \draw[dashed,very thick,red] (d1) edge (b1);
 \draw[dashed,very thick,red] (a1) edge (d1);

    \draw[-,thick,green] (d) edge (b);

 %\draw [dashed,red] (b) edge (e);
 %\draw [dashed,red] (b) edge (d);

%\draw [-,green] (c) edge (d);
%\draw [-,green] (c) edge (e);
%\draw [-,green] (a) edge (c);
%\draw [-,green] (a) edge (d);

\draw [-,green] (a) edge (d);

%\draw [-,yellow] (a) edge (e);
%\draw [-,yellow] (0,-0.1) edge (-2.3, -1.8);
%\draw [-,yellow] (2.3,-1.8) edge (-2.3, -1.8);

\path[draw, fill=green!20, opacity=.4] (-1.5,2)--(0.2,3.5)--(-1,0)--cycle;
\path[draw, fill=green!20, opacity=.5] (-1.5,2)--(1.5,1.5)--(0.2,3.5)--cycle;
\path[draw, fill=green!20, opacity=.4] (-1,0)--(1.5,1.5)--(0.2,3.5)--cycle;

%\draw [-,cyan!40]   (c)  edge (d);
%\draw [-,cyan!40] (d)  edge (e);
%\draw [-,cyan!40] (e) edge (c);

\end{tikzpicture}
\hspace{1.5cm}
\begin{tikzpicture}[scale=1,every path/.style={>=latex},every node/.style={draw,circle,fill=black,scale=0.4}]

  \node      [draw=black, circle]      (a2) at (0.8,1.2)  {};
  \node      [right, draw=none, fill=none, scale=2]      (a2tt) at (0.8,1.2)  {$\overline{e_2}$};
  \node      [draw=none, fill=none, scale=3]      (a2b) at (-0.3,1.6)  {};
  \node      [draw=black, circle, fill=green]      (a4) at (1.5,-0.3)  {};
  \node      [right,draw=none, fill=none, scale=2]      (a4tt) at (1.5,-0.3)  {$\overline{f_j}$};
  \node      [draw=none, circle, fill=white, scale=2]      (a4t) at (1.3,0.5)  {$t$};
  \node      [draw=black, circle]      (a5) at (1,-1.5)  {};
  \node      [right,draw=none, fill=none, scale=2]      (a5tt) at (1,-1.5)  {$\overline{e_1}$};
  \node      [draw=none, circle, fill=white, scale=2]      (a5t) at (1.44,-0.8)  {$t$};
  \node      [draw=none,  fill=none, scale=3]      (a5b) at (0.1,-2)  {};

 \draw[-,thick] (a4) edge (a2);
 \draw[->,thick] (a2) edge (a2b);
 \draw[->,thick] (a5) edge (a5b);
 \draw[dashed,thick,red] (a5) edge (a2);

 \draw[-,thick] (a4) edge (a5);

\end{tikzpicture}
\hspace{1.5cm}
\begin{tikzpicture}[scale=1,every path/.style={>=latex},every node/.style={draw,circle,fill=black,scale=0.4}]
  \node      [draw=black, circle]      (a1) at (0,0)  {};
\node [left,draw=none, circle, fill=none,scale=2]    (a1t) at (0,0)  {$f_l$};
  \node      [draw=black, circle]      (a2) at (1,2)  {};
\node [above,draw=none, circle, fill=none,scale=2]      (a2t) at (1,2)  {$e_1$};
  \node      [draw=black, circle]      (a3) at (2.5,0.2)  {};
\node [right,draw=none, circle, fill=none,scale=2]     (a3t) at (2.5,0.2)  {$f_k$};
  \node      [draw=black, circle]      (a5) at (1,-2)  {};
\node [below,draw=none, circle, fill=none,scale=2]    (a5t) at (1,-2)  {$e_2$};

\node [draw=none, fill=none,scale=2]    (t1) at (2,1)  {$t$};
\node [draw=none, fill=none,scale=2]    (t2) at (0.2,0.9)  {$t$};
\node [draw=none, fill=none,scale=2]    (t3) at (1.2,0.3)  {$t$};
\node [draw=none, fill=none,scale=2]    (t4) at (0.3,-1)  {$t$};
\node [draw=none, fill=none,scale=2]    (t5) at (1.9,-0.9)  {$t$};

 \draw[-,thick] (a1) edge (a3);
 \draw[-,thick] (a1) edge (a2);
 \draw[-,thick] (a3) edge (a2);
 \draw[-,thick] (a1) edge (a5);
 \draw[-,thick] (a3) edge (a5);
 \draw[dashed,red,thick] (a2) edge (a5);

\path[draw, fill=green!20, opacity=.4] (0,0)--(1,2)--(2.5,0.2)--cycle;
\path[draw, fill=green!20, opacity=.4] (0,0)--(2.5,0.2)--(1,-2)--cycle;

\end{tikzpicture}

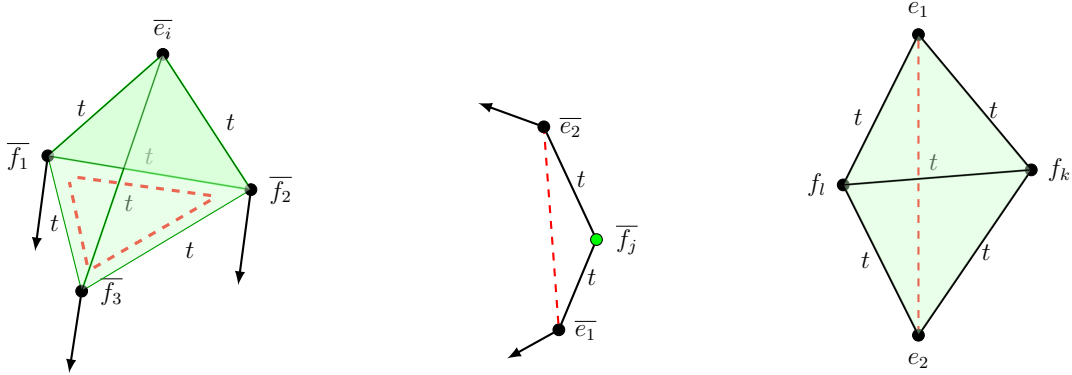
\captionof{figure}{Some crosscut pictures of the edge cone $\sigma_{K_{2,3}}$}\label{kucukrigidexs}
\end{center}
\end{ex}

\noindent Finally, we consider the edge cone of $K_{3,3}$. Similar to $\sigma_{K_{2,3}}$, if $\overline{f_1}, \ \overline{f_2}$ and $\overline{f_3}$ are vertices in $Q(R)$, then there exists a non-lattice vertex $\overline{e_i}$ in $Q(R)$. The same follows  symmetrically for the vertices $\overline{e_1},\ \overline{e_2}$ and $\overline{e_3}$.

\begin{thm}\label{rigidcomplete}
$\TV(K_{m,n})$ is rigid except for $m=n = 2$. 
\end{thm}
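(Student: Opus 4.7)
The plan is to handle the exceptional case $K_{2,2}$ by citing Example~\ref{minirigidi}, which already exhibits a nontrivial deformation at $R=[1,1,1,1]\in M$. For every remaining pair $(m,n)$ with $m,n\geq 2$ I will apply Theorem~\ref{altmann96}: this is available because $\TV(K_{m,n})$ is smooth in codimension $2$ by Theorem~\ref{smooth2co}, so it suffices to verify that for every deformation degree $R\in M$ the vector space $V(R)$, further cut down by the non-lattice vertex conditions $t_{ij}=t_{jk}$, is contained in $\CC(1,\ldots,1)$.

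Fix $R=(R_1,\ldots,R_{m+n})\in M$ and set $I=\{i\in[m]:R_i\geq 1\}$, $J=\{j\in[n]:R_{m+j}\geq 1\}$; the vertices of $Q(R)^c$ are $\{\overline{e_i}:i\in I\}\cup\{\overline{f_j}:j\in J\}$, and the identity $\sum_i R_i=\sum_j R_{m+j}$ combined with $R_k\leq 0$ outside $I,J$ forces $I=\emptyset\iff J=\emptyset$. In this trivial case $Q(R)^c$ is empty and $T^1(-R)=0$. Assume $|I|,|J|\geq 1$. By Proposition~\ref{completefaces} together with Example~\ref{smallcompletes}, $(e_i,f_j,f_{j'})$ is a $3$-face of $\sigma_{K_{m,n}}$ for every $i$ and every pair of distinct $j,j'$, and $(e_i,e_{i'},f_j)$ is a $3$-face whenever $m\geq 3$ (and, when $m=2$, whenever $\{i,i'\}\neq\{1,2\}$). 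Whenever $|J|\geq 2$, the triangles $\{\overline{e_i},\overline{f_j},\overline{f_{j'}}\}$ are compact $2$-faces of $Q(R)$, and their cycle relations force $t_{e_i,f_j}=t_{e_i,f_{j'}}=t_{f_j,f_{j'}}$; letting $i$ vary over $I$ while keeping the shared edge $\{\overline{f_j},\overline{f_{j'}}\}$ fixed propagates this common value to every edge incident to any $\overline{e_i}$. The symmetric argument with $|I|\geq 2$ handles edges among the $\overline{e_i}$'s via the triangles $\{\overline{e_i},\overline{e_{i'}},\overline{f_j}\}$, as long as $m\geq 3$. This transfer covers every configuration with $\min(|I|,|J|)\geq 2$, together with the one-sided cases $|I|=1,|J|\geq 2$ and $|I|\geq 2,|J|=1$ when $m\geq 3$.

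The remaining configurations are $|I|=|J|=1$, in which $Q(R)^c$ is a single compact edge, so $V(R)=\CC=\CC(1,1)$, and the sole delicate corner case $m=2,|I|=2,|J|=1$ (together with its mirror $n=2,|I|=1,|J|=2$). Here $(e_1,e_2)$ is not a $2$-face, so $Q(R)^c$ carries no compact $2$-face and $V(R)=\CC^2$; however, the arithmetic identity $R_{m+1}=R_1+R_2-\sum_{j\geq 2}R_{m+j}$ with $R_1,R_2\geq 1$ and $R_{m+j}\leq 0$ for $j\geq 2$ forces $R_{m+1}\geq 2$, so $\overline{f_1}$ is a non-lattice vertex and Theorem~\ref{altmann96} imposes $t_{e_1,f_1}=t_{e_2,f_1}$, collapsing $\overline{t}$ into $\CC(1,1)$. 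I expect the main obstacle to be precisely this degenerate corner: the triangle-transfer argument fails because the required $3$-face does not exist, and one must instead exploit the integrality of $R\in M$ to manufacture a non-lattice vertex whose star condition re-imposes the missing $t$-equality. Once this corner is settled, rigidity follows uniformly from the triangle argument in all other configurations.
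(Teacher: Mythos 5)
Your proof follows the same strategy as the paper's own: deploy the face structure from Proposition~\ref{completefaces} and Example~\ref{smallcompletes}, then verify via Theorem~\ref{altmann96} that the cycle and star conditions force $T^1_{\TV(K_{m,n})}(-R)=0$ for every $R$. You organize the verification by the sign pattern $(|I|,|J|)$, whereas the paper organizes it by which non-$2$-faces or non-$3$-faces can appear in $Q(R)^c$ and treats $K_{2,3},K_{3,3}$ as separate small cases; the substance, including the forced non-lattice vertex in the $m=2$ corner, is the same.

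There is, however, a genuine gap. The claim that $\sum_i R_i=\sum_j R_{m+j}$ together with $R_k\leq 0$ outside $I\cup J$ forces $I=\emptyset\iff J=\emptyset$ is false: for $K_{3,3}$, $R=[0,0,0,1,0,-1]\in M$ gives $I=\emptyset$, $J=\{1\}$, and for $K_{4,4}$, $R=[0,0,0,0,1,1,1,-3]$ gives $I=\emptyset$, $J=\{1,2,3\}$. After ``Assume $|I|,|J|\geq 1$'' your argument never returns to the configurations with exactly one of $I,J$ empty, yet these do occur and are simply unreached. (They are in fact harmless: for $I=\emptyset$ and $|J|\geq 3$ the triangles among the $\overline{f_j}$'s are compact $2$-faces, since $J=[n]$ would force $\sum_j R_{m+j}\geq n>0\geq\sum_i R_i$, a contradiction; for $|J|\leq 2$ the crosscut carries no class. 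But this must be said.) A second, smaller slip: the blanket claim that $(e_i,f_j,f_{j'})$ is a $3$-face for every $i$ and every distinct pair $j,j'$ fails when $n=2$, since triples containing both $f_1$ and $f_2$ are then excluded by the $m\leftrightarrow n$ symmetry of Proposition~\ref{completefaces}, and your mention of the mirror corner case $n=2$ shows you are not imposing WLOG $n\geq 3$; the configuration $n=2$, $|I|\geq 2$, $|J|=2$ then needs the transfer to run through the triangles $\{\overline{e_i},\overline{e_{i'}},\overline{f_j}\}$ rather than the ones you invoke. Declaring WLOG $n\geq m$ at the outset and filling the $I=\emptyset$ oversight would make the argument complete.
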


\begin{proof}
We have shown in \hyperref[minirigidi]{Example \ref*{minirigidi}} that $\TV(K_{2,2})$ is not rigid and $\TV(K_{2,3})$ and $\TV(K_{3,3})$ are rigid. By \hyperref[completefaces]{Proposition \ref*{completefaces}}, it remains to prove three cases: \\

\noindent [$m=2$ and $n\geq 4$]: The 2-faces are all pairs except $(e_1,e_2)$ and the 3-faces are all triples which do not contain both $e_1$ and $e_2$. Assume that there exists a deformation degree $R\in M$ such that $\overline{e_1}$ and $\overline{e_2}$ are vertices in $Q(R)$ and $\overline{f_j}$ is a lattice vertex in $Q(R)$ for some $j \in [n]$. Then we obtain that 
\[R_3 + \ldots + R_{j+1} + R_{j+3} + \ldots + R_{n+2} \geq 1.\]
Thus there exists a vertex $\overline{f_{j'}} \in Q(R)$ with $j' \neq j$. Hence we conclude that $T_{K_{m,n}}^1(-R) = 0$, since $(e_1,f_j,f_{j'})$ and $(e_2,f_j,f_{j'})$ are 3-faces and $t$ is transfered by the edge $\overline{\overline{f_j} \ \overline{f_{j'}}}$ as explained in \hyperref[prooftechnique]{Remark \ref*{prooftechnique}}.

\begin{center}

\begin{tikzpicture}[scale=0.9,every path/.style={>=latex},every node/.style={draw,circle,fill=black,scale=0.4}]

  \node      [circle]      (a) at (-1,0)  {};
  \node      [left,draw=none, circle, fill=none,scale=2]      (at) at (-1,0)  {$\overline{f_j}$};
  
    \node        [circle]      (b) at (-1.5,1.5)  {};
  \node        [left,draw=none, circle, fill=none,scale=2]      (bt) at (-1.5,1.5)  {$\overline{e_1}$};
  
    \node       [circle]        (d) at (1.5,1.5) {};
  \node       [right,draw=none, circle, fill=none,scale=2]        (dt) at (1.5,1.5) {$\overline{e_2}$};

\node      [circle]      (c) at (0.8,0.3)  {};
\node      [right,draw=none, circle, fill=none,scale=2]      (ct) at (0.8,0.3)  {$\overline{f_{j'}}$};
 
  \node      [draw=none, fill=white, scale=2]      (a1) at (-0.7,0.8)  {$t$};

  \node      [draw=none, fill=white, scale=2]      (a4) at (0.4,1.2)  {$t'=t$};
  \node      [draw=none, fill=none, scale=0.25]      (c1) at (-2,3.5)  {};
\node      [draw=none, fill=none, scale=0.25]      (c2) at (2,3.5)  {};

    \draw[->,thick] (b) edge (c1);
    \draw[->,thick] (d) edge (c2);
    \draw[green,thick] (a) edge (b);
    \draw[dashed,thick,red] (d) edge (b);

 %\draw [dashed,red] (b) edge (e);
 %\draw [dashed,red] (b) edge (d);

%\draw [-,green] (c) edge (d);
%\draw [-,green] (c) edge (e);
%\draw [-,green] (a) edge (c);
%\draw [-,green] (a) edge (d);

\draw [thick,green] (a) edge (d);
\draw [thick,green] (c) edge (a);
\draw [thick,green] (c) edge (b);
\draw [thick,green] (c) edge (d);

%\draw [-,yellow] (a) edge (e);
%\draw [-,yellow] (0,-0.1) edge (-2.3, -1.8);
%\draw [-,yellow] (2.3,-1.8) edge (-2.3, -1.8);

\path[draw, fill=green!20, opacity=.4] (-1.5,1.5)--(0.8,0.3)--(-1,0)--cycle;
\path[draw, fill=green!20, opacity=.4] (1.5,1.5)--(0.8,0.3)--(-1,0)--cycle;

%\draw [-,cyan!40]   (c)  edge (d);
%\draw [-,cyan!40] (d)  edge (e);
%\draw [-,cyan!40] (e) edge (c);

\end{tikzpicture}
\hspace{2cm}

\captionof*{figure}{The 2-faces are colored in green and the vertex $\overline{f_j}$ is a lattice vertex in $Q(R)$.}

\end{center}
\vspace{0.5cm}
[$m=3$ and $n \geq 4$]: The 2-faces are all pairs and the 3-faces are all triples except $(e_1,e_2,e_3)$. We just need to check the case where the non 3-face $(e_1,e_2,e_3)$ is in the compact part of $Q(R)$. In this case, we obtain that $\sum_{i=4}^{n+3} R_i \geq 3$. This implies that there exists a vertex $\overline{f_j}$ for some $j \in [n]$. Thus $t$ is transfered by the 2-faces $(f_j,e_1)$, $(f_j,e_2)$ and $(f_j,e_3)$. 
\begin{center}
\begin{tikzpicture}[scale=0.9,every path/.style={>=latex},every node/.style={draw,circle,fill=black,scale=0.4}]
  \node      [circle]      (a) at (-1,0)  {};
  \node      [right,draw=none, circle, fill=none,scale=2]      (at) at (-1,0)  {$\overline{e_3}$};
  \node      [draw=none, fill=none, scale=1]      (a1) at (-0.9,0.3)  {};
  \node      [draw=none, fill=none, scale=1]      (a2) at (-1.2,-1.3)  {};
  \node        [circle]      (b) at (-1.5,2)  {};
  \node        [left,draw=none, circle, fill=none,scale=2]    (bt) at (-1.5,2)  {$\overline{e_1}$};
  \node      [draw=none, fill=none, scale=1]      (b1) at (-1.2,1.7)  {};
  \node      [draw=none, fill=none, scale=1]      (b2) at (-1.7,0.5)  {};
  \node       [ circle]        (d) at (1.5,1.5) {};
  \node      [right,draw=none, circle, fill=none,scale=2]         (dt) at (1.5,1.5) {$\overline{e_2}$};
  \node      [draw=none, fill=none, scale=1]      (d1) at (1,1.4)  {};
  \node      [draw=none, fill=none, scale=1]      (d2) at (1.3,0)  {};
  \node       [circle]        (c) at (0.2,3.5) {};
  \node       [above,draw=none, circle, fill=none,scale=2]        (ct) at (0.2,3.5) {$\overline{f_j}$};

  \node      [draw=none, fill=none, scale=2]      (t1) at (-1.4,1)  {$t$};
  \node      [draw=none, fill=none, scale=2]      (t2) at (0.6,0.6)  {$t$};
  \node      [draw=none, fill=none, scale=2]      (t3) at (-0.3,1.3)  {$t$};
  \node      [draw=none, fill=none, scale=2]      (t4) at (-1,2.7)  {$t$};
  \node      [draw=none, fill=none, scale=2]      (t5) at (1.2,2.5)  {$t$};
  \node      [draw=none, fill=none, scale=2]      (t6) at (0,2)  {$t$};

 \draw[->,thick] (b) edge (b2);
 \draw[->,thick] (d) edge (d2);
 \draw[->,thick] (a) edge (a2);

    \draw[thick,green] (c) edge (b);
    \draw[thick,green] (a) edge (c);
    \draw[thick,green] (d) edge (c);

    \draw[-,green] (a) edge (b);
 \draw[dashed,very thick,red] (a1) edge (b1);
 \draw[dashed,very thick,red] (d1) edge (b1);
 \draw[dashed,very thick,red] (a1) edge (d1);

    \draw[-,thick,green] (d) edge (b);

 %\draw [dashed,red] (b) edge (e);
 %\draw [dashed,red] (b) edge (d);

%\draw [-,green] (c) edge (d);
%\draw [-,green] (c) edge (e);
%\draw [-,green] (a) edge (c);
%\draw [-,green] (a) edge (d);

\draw [-,green] (a) edge (d);

%\draw [-,yellow] (a) edge (e);
%\draw [-,yellow] (0,-0.1) edge (-2.3, -1.8);
%\draw [-,yellow] (2.3,-1.8) edge (-2.3, -1.8);

\path[draw, fill=green!20, opacity=.4] (-1.5,2)--(0.2,3.5)--(-1,0)--cycle;
\path[draw, fill=green!20, opacity=.5] (-1.5,2)--(1.5,1.5)--(0.2,3.5)--cycle;
\path[draw, fill=green!20, opacity=.4] (-1,0)--(1.5,1.5)--(0.2,3.5)--cycle;

%\draw [-,cyan!40]   (c)  edge (d);
%\draw [-,cyan!40] (d)  edge (e);
%\draw [-,cyan!40] (e) edge (c);

\end{tikzpicture}
\end{center}

\captionof*{figure}{The dashed red area means that $( e_1,e_2,e_3)$ do not span a 3-face.}
\vspace{0.5cm}

[$m \geq 4$ and $n\geq 4$]: All pairs are 2-faces and all triples are 3-faces. Hence the associated toric variety is rigid. 
\end{proof}

\subsection{Complete bipartite graphs with multiple edge removals}
Recall again that $C \in \Ione$ is two-sided with $C_1= \{1,\ldots,t_1\}$ and $C_2=\{m+1,\ldots,m+t_2\}$. We denote $\pi(C)$ as $\mathfrak{c} = \sum_{i>t_1} e_i - \sum_{j\leq t_2} f_j$ under the map from \hyperref[11thm]{Theorem \ref*{11thm}}.
\begin{prop}\label{obvious}
Let $G \subset K_{m,n}$ be a connected bipartite graph with exactly one two-sided first independent set $C \in \Ione$.
\begin{enumerate}
\item The pair $(f_{n-1},f_n)$ does not span a two-dimensional face if and only if $|C_2|=n-2$. Moreover, no simplicial three-dimensional face contains both $f_{n-1}$ and $f_n$.
\item The pair $(\mathfrak{c},e_1)$ does not span a two-dimensional face if and only if $|C_1|=1$ and $|C_2|\neq n-1$, Moreover, no simplicial three-dimensional face contains both $\mathfrak{c}$ and $e_1$.
\item If $|C_1|=1$ and $|C_2|=n-2$, then $\TV(G)$ is not rigid.

\end{enumerate}
\end{prop}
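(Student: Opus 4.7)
The plan is to settle parts (1) and (2) by direct component counts via Theorem~\ref{facetheorem} (combined with Propositions~\ref{aaconditions} and~\ref{acbcconditions}), and to derive part (3) by exhibiting a non-simplicial three-dimensional face of $\sigma_G$ and invoking Theorem~\ref{4tuplenonrigid}.

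For \textbf{part (1)}, I apply Theorem~\ref{facetheorem} to $S=\{B_{n-1},B_n\}$ with $B_j=U_2\setminus\{m+j\}$. The intersection $\G\{B_{n-1}\}\cap\G\{B_n\}$ is obtained from $G$ by deleting every edge incident to $m+n-1$ or $m+n$; both of these vertices are then isolated, while vertex $1\in C_1$ becomes isolated iff $N(1)=U_2\setminus C_2\subseteq\{m+n-1,m+n\}$, i.e.\ $|C_2|=n-2$. If $|C_2|<n-2$, vertex~$1$ retains a neighbor and the intersection has three components, so $(f_{n-1},f_n)\in\Itwo$; if $|C_2|=n-2$ the intersection has four components ($K_{m-1,n-2}$ on $\{2,\ldots,m\}\sqcup C_2$ plus three isolated vertices), so the span is three-dimensional rather than a $2$-face. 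The ``moreover'' claim is immediate, since every pair of ray generators of a simplicial three-dimensional face spans a $2$-face.

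For \textbf{part (2)}, I apply Proposition~\ref{acbcconditions} to the pair $(A_1,C)$ with $A_1=U_1\setminus\{1\}$. Since $1\in C_1$ by the labeling, condition (2) fails outright. Condition (1) reduces to $C_1=\{1\}$ together with $|C_2|=n-1$, while condition (3) reduces to $N(C_2)=U_1\setminus C_1\subsetneq U_1\setminus\{1\}$, i.e.\ $|C_1|\geq 2$; its connectivity hypothesis is automatic because $C_1\setminus\{1\}$ is completely joined to $U_2\setminus C_2$ in~$G$. Hence $(A_1,C)\in\Itwo$ iff $|C_1|\geq 2$ or ($|C_1|=1$ and $|C_2|=n-1$), and the negation is the stated condition. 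The ``moreover'' follows as in part~(1).

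For \textbf{part (3)}, assume $|C_1|=1$ and $|C_2|=n-2$, so that parts (1) and (2) supply the two non-$2$-face pairs $(f_{n-1},f_n)$ and $(\mathfrak{c},e_1)$. Applying Theorem~\ref{facetheorem} to $S=\{A_1,B_{n-1},B_n,C\}$, one checks directly that intersecting the graph from part~(1) with $\G\{A_1\}$ and $\G\{C\}$ removes no further edges, so $\G[S]$ still has four components and $\tau:=\langle e_1,f_{n-1},f_n,\mathfrak{c}\rangle$ is a face of dimension $3$; equivalently, Corollary~\ref{tatli} forces both $e_1$ and $\mathfrak{c}$ into the generating set of the face from part~(1). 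Using $\mathfrak{c}=\sum_{i\geq 2}e_i-\sum_{j\leq n-2}f_j$, one verifies $e_1+\mathfrak{c}=f_{n-1}+f_n$ modulo $\overline{(1,-1)}$; since this dependence has mixed signs, none of the four rays is a nonnegative combination of the other three, so all four are extremal generators and $\tau$ is non-simplicial. Theorem~\ref{4tuplenonrigid} then yields that $\TV(G)$ is not rigid. The only real book-keeping throughout is tracking the isolated vertex $\{1\}$ in the intersection graph, whose presence is precisely what produces the non-simplicial face.
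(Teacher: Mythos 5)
Your proposal is correct, and the overall structure matches the paper's argument: parts (1) and (2) reduce to the two‑face criteria (Theorem~\ref{facetheorem} via Propositions~\ref{aaconditions} and~\ref{acbcconditions}), and part (3) exhibits the non‑simplicial $3$-face and invokes Theorem~\ref{4tuplenonrigid}. There are two small, harmless divergences worth noting. For the ``moreover'' clauses, the paper identifies the unique $3$-face containing the bad pair by invoking Lemma~\ref{3faceAA}~(2)(i), while you observe directly that a pair that fails to span a $2$-face cannot sit inside any simplicial $3$-face — this is cleaner and needs no further input. For part (3), the paper again cites Lemma~\ref{3faceAA}~(2)(i) to produce $(\mathfrak{c},e_1,f_{n-1},f_n)\in\Ithree$, whereas you establish the $3$-face by a direct component count on $\G[S]$ and then certify non-simpliciality via the explicit relation $e_1+\mathfrak{c}=f_{n-1}+f_n$ in $N$; this is a self-contained verification that bypasses the classification lemma but amounts to the same content.
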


\begin{proof}
By \hyperref[3faceAA]{Lemma \ref*{3faceAA}} (2)(i), we obtain the non-simplicial 3-faces $(\mathfrak{c}, e_1, f_{n-1},f_n)$ in (3).  It results to a non-rigid toric variety $\TV(G)$ by \hyperref[4tuplenonrigid]{Theorem \ref*{4tuplenonrigid}}. In (1), by \hyperref[aaconditions]{Proposition \ref*{aaconditions}} (1), $(f_{n-1}, f_n)$ does not span a two-dimensional face. Since we have exactly one two-sided first independent set $C$, by \hyperref[3faceAA]{Lemma \ref*{3faceAA}} (2)(i), the only three-dimensional face containing $(f_{n-1},f_n)$ is again $(\mathfrak{c}, e_1, f_{n-1},f_n)$ as in (3). Similarly for (2), by \hyperref[acbcconditions]{Proposition \ref*{acbcconditions}}(1) an (3) $(\mathfrak{c},e_1)$ does not span a two-dimensional face if and only if $|C_1|=1$ and $|C_2|\neq n-1$. 
\end{proof}

\noindent Note that the cases where $|C_2|=1$ and $|C_1|=m-2$ can be studied symmetrically. In the next proposition, we examine the three-dimensional faces of $\sigma_G$. These statements can also be studied symmetrically.

\begin{prop} \label{non3}
Let $G \subset K_{m,n}$ be a connected bipartite graph with exactly one two-sided first independent set $C \in \Ione$.
\begin{enumerate}
\item The triple $(f_{n-2},f_{n-1},f_n)$ does not span a three-dimensional face if and only if $|C_2|=n-3$.
\item The triple $(\mathfrak{c},e_1,e_2)$ does not span a three-dimensional face if and only if $|C_1|=2$ and $|N(C_1)|\neq 1$.
\item The triple $(e_1, f_{n-1}, f_{n})$ does not span a three-dimensional face if and only if $|C_2|=n-2$
\item The triple $(\mathfrak{c},e_1,f_1)$ does not span a three-dimensional face if and only if $|C_1|=1$ or $|C_2|=1$, except for $G \subset K_{2,2}$.
\end{enumerate}
\end{prop}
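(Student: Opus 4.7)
The plan is to convert each statement into a component count using \hyperref[facetheorem]{Theorem \ref*{facetheorem}}: a triple $S\subseteq\Ione$ spans a three-dimensional face of $\sigma_G$ exactly when the intersection subgraph $\G[S]:=\bigcap_{A\in S}\G\{A\}$ has four connected components. The key simplification available here is that $G$ arises from $K_{m,n}$ by deleting only the edges between $C_1$ and $C_2$, so $\G\{C\}$ decomposes as the disjoint union of the two complete bipartite pieces $G[C_1\sqcup(U_2\setminus C_2)]$ and $G[(U_1\setminus C_1)\sqcup C_2]$, while each $\G\{U_i\setminus\{v\}\}$ is just $G$ with $v$ detached as an isolated vertex. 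Consequently, intersecting a triple built from $\mathfrak{c}=\pi(C)$ and some $U_i\setminus\{v\}$'s amounts to discarding the $U_1\setminus C_1$-to-$U_2\setminus C_2$ edges (when $\mathfrak{c}$ appears) and isolating a prescribed collection of vertices, after which the surviving edges always form a union of complete bipartite blocks whose connectivity is immediate from their vertex-set sizes.

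For part (1), the intersection isolates $\{m+n-2,m+n-1,m+n\}$. If $|C_2|=n-3$ these three vertices make up all of $U_2\setminus C_2$, so the only surviving edges run between $U_1\setminus C_1$ and $C_2$; the vertices of $C_1$ are isolated, giving $|C_1|+4\geq 5$ components. Otherwise some vertex of $U_2\setminus C_2$ survives, is adjacent to all of $U_1$, and collapses the remaining graph into a single block, yielding exactly four components. Part (3) runs analogously with isolated vertex set $\{1,m+n-1,m+n\}$ and the critical size $|C_2|=n-2$. For part (2), isolating $\{1,2\}\subseteq C_1$ strips every edge from the left block $G[C_1\sqcup(U_2\setminus C_2)]$ of $\G\{C\}$ precisely when $|C_1|=2$, at which point the $n-|C_2|=|N(C_1)|$ vertices of $U_2\setminus C_2$ become isolated and the component count exceeds four unless $|N(C_1)|=1$; for $|C_1|\geq 3$ the left block remains a complete bipartite graph on $(C_1\setminus\{1,2\})\times(U_2\setminus C_2)$ and the count lands on four. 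For part (4), isolating $\{1,m+1\}$ from $\G\{C\}$ decomposes the surviving graph as $K_{|C_1|-1,\,n-|C_2|}\sqcup K_{m-|C_1|,\,|C_2|-1}$ plus the two isolated vertices; both bicliques contribute a single component iff $|C_1|,|C_2|\geq 2$, and the $G\subset K_{2,2}$ exception is the degenerate setting $|C_1|=|C_2|=1,\ m=n=2$ in which the bicliques collapse to empty graphs and the total still equals four, as already seen in \hyperref[kucuk]{Example \ref*{kucuk}}.

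The reverse implications fall out of reading the same counts in the other direction, so the main obstacle is really just disciplined case analysis. For each sub-case I will first verify via \hyperref[lemonesid]{Lemma \ref*{lemonesid}} that the one-sided sets $U_i\setminus\{v\}$ appearing in the triple are not absorbed by $C$ (for instance $U_2\setminus\{m+n\}=C_2$ when $|C_2|=n-1$, which would eliminate $f_n$ from the list of extremal rays), and only then tally the connected components of $\G[S]$ as a biclique-block decomposition. No deformation-theoretic input is required; the entire argument is a face-lattice computation on $\sigma_G$ driven by \hyperref[facetheorem]{Theorem \ref*{facetheorem}}.
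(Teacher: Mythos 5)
Your overall strategy mirrors the paper's: reduce each claim to a component count on $\G[S]$ via Theorem~\ref{facetheorem} and exploit the biclique structure of $\G\{C\}$. For parts (1), (2) and (4) this matches what the paper does (the paper also defers parts (3) and (4) to Proposition~\ref{obvious} rather than recomputing, but the underlying calculation is the same).

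There is, however, a genuine gap in part (3). You treat ``the triple spans a $3$-face'' as equivalent to ``$\G[S]$ has exactly four connected components,'' but Theorem~\ref{facetheorem} only controls the \emph{dimension} of the minimal face $H_{\Val_S}\cap\sigma_G$ containing $\pi(S)$; Corollary~\ref{tatli} warns that this face may pick up further extremal generators, in which case $\pi(S)$ does \emph{not} span it. That is precisely what happens here: when $|C_1|=1$ and $|C_2|=n-2$, isolating $\{1,m+n-1,m+n\}$ leaves $3+(|C_1|-1)+1=4$ components, so your criterion would declare that $(e_1,f_{n-1},f_n)$ spans a $3$-face. But $\G[S]\subset\G\{C\}$ in this sub-case, so by Corollary~\ref{tatli} the ray $\mathfrak{c}$ is also a generator, and the face is the non-simplicial $(\mathfrak{c},e_1,f_{n-1},f_n)$ from Proposition~\ref{obvious}~(3); hence the triple does \emph{not} span a $3$-face, exactly as the proposition asserts. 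The paper avoids this pitfall by routing (3) through Proposition~\ref{obvious}~(1) and~(3), which already record that no simplicial $3$-face contains $(f_{n-1},f_n)$ when $|C_2|=n-2$. Your argument needs an analogous check (via Corollary~\ref{tatli} or Proposition~\ref{obvious}) to rule out a fourth generator in the borderline sub-case $|C_1|=1$. A related but lesser caveat: in part (1) your ``otherwise'' branch assumes some vertex of $U_2\setminus C_2$ survives, which fails when $|C_2|\ge n-2$; the paper's phrasing is equally terse there, so this is more a shared imprecision of the proposition than a flaw specific to your write-up.
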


\begin{proof}
For (1), the intersection subgraph $\G\{U_2 \backslash \{m+n-2\}\}\cap \G\{U_2 \backslash \{m+n-1\}\} \cap \G\{U_2 \backslash \{m+n\}\}$ has more than four connected components if and only if $|C_2|=n-3$.  For (2), the intersection subgraph $\G\{C\} \cap \G\{U_1 \backslash \{1\}\} \cap \G\{U_1 \backslash \{2\}\}$ has more than four connected components if and only if $|C_1|=2$. In particular, if $|N(C_1)|=1$, $(\mathfrak{c}, e_1,e_2)$ spans a 3-face. For (3) we refer to the proof of \hyperref[obvious]{Proposition \ref*{obvious}} (1) and (3). For (4), the intersection subgraph $\G\{C\} \cap \G\{U_1 \backslash \{1\}\}$ has more than three connected components if $|C_1|$ or $|C_2|$ is equal to one. In particular the graph $G \subset K_{2,2}$ has been examined in \hyperref[kucuk]{Example \ref*{kucuk}}: $\sigma_G$ is generated by $(\mathfrak{c},e_1,f_1)$ and $\TV(G)$ is rigid.
\end{proof}

\begin{thm}\label{onetwosided}
Let $G \subsetneq K_{m,n}$ be a connected bipartite graph with exactly one two-sided first independent set $C \in \Ione$. Then 
\begin{enumerate}
\item $\TV(G)$ is not rigid, if $|C_1|=1$ and $|C_2| = n-2$ or if $|C_1|=m-2$ and $|C_2|=1$.
\item $\TV(G)$ is rigid, otherwise.
\end{enumerate} 
\end{thm}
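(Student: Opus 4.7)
Assertion (1) follows without further work: when $|C_1|=1$ and $|C_2|=n-2$, Proposition \ref{obvious}(3) produces the non-simplicial three-dimensional face spanned by $\{\mathfrak{c}, e_1, f_{n-1}, f_n\}$, and Theorem \ref{4tuplenonrigid} then forces non-rigidity. The symmetric case $|C_1|=m-2$, $|C_2|=1$ is analogous.

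For assertion (2) my strategy is to invoke Theorem \ref{altmann96} (available because $\TV(G)$ is smooth in codimension $2$ by Theorem \ref{smooth2co}) and show $T^1_{\TV(G)}(-R)=0$ for every $R\in M$. Since there is exactly one two-sided first independent set $C$, Propositions \ref{obvious} and \ref{non3} bound the combinatorics drastically: once case (1) is excluded, at most one pair from $\{(f_{n-1},f_n),\ (e_{m-1},e_m),\ (\mathfrak{c},e_1),\ (\mathfrak{c},f_1)\}$ fails to span a $2$-face, and the list of failing $3$-faces is correspondingly short; every other pair, respectively triple, spans a face of the expected dimension. Fixing $R\in M$, I would identify the vertices of $Q(R)^c$ via $\langle R, e_i\rangle = R_i$, $\langle R, f_j\rangle = R_{m+j}$, and $\langle R,\mathfrak{c}\rangle = \sum_{i>t_1}R_i - \sum_{j\le t_2}R_{m+j}$, while the integrality constraints $R_i+R_{m+j}\ge 1$ for $(i,j)\in E(G)$ restrict which vertex configurations can actually occur.

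The routine step is to show that whenever $Q(R)^c$ contains both members of a non $2$-face pair, it also contains an auxiliary vertex that forms a $2$-face with each of them. For instance, under $|C_2|=n-2$, the presence of both $\overline{f_{n-1}},\overline{f_n}$ forces $R_{m+n-1}+R_{m+n}\ge 2$, which combined with the edges of $G$ yields a further vertex $\overline{a}\in Q(R)^c$ joined to both by $2$-faces; the transfer rule of Remark \ref{prooftechnique} then equalizes all entries of any $\bar t\in V(R)$, collapsing $V(R)$ onto the diagonal $\CC(1,\ldots,1)$. The non $3$-face triples of Proposition \ref{non3} are handled in the same spirit: the configuration always admits a further extremal ray whose representative sits on the skeleton and stitches the $2$-faces together, so no non-trivial class in $T^1_{\TV(G)}(-R)$ survives.

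The main obstacle is the scenario where $\mathfrak{c}$ itself is a vertex of $Q(R)^c$ together with $e_1$ (or symmetrically $f_1$) under $|C_1|=1$ (resp.\ $|C_2|=1$), since $(\mathfrak{c},e_1)$ is precisely the non $2$-face that must be bypassed. The pairing $\langle R,\mathfrak{c}\rangle \ge 1$ involves a signed sum of coordinates of $R$, so producing the required third vertex demands a careful analysis of the integrality constraints inherited from $E(G)$ together with the size restrictions $|C_2|\in\{1,\ldots,n-3\}\cup\{n-1\}$ built into case (2). I expect the bulk of the work to lie in separating the subcases $|C_2|=n-1$, $|C_2|\le n-3$, and their symmetric counterparts, and checking in each that the compact part of $Q(R)$ cannot consist solely of the edge $\overline{\overline{\mathfrak{c}}\,\overline{e_1}}$ or a short chain failing to transfer.
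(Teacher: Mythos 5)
Part (1) is correct and identical to the paper's argument: Proposition \ref{obvious}(3) plus Theorem \ref{4tuplenonrigid}. For part (2) your general strategy (smoothness in codimension 2 via Theorem \ref{smooth2co}, then Theorem \ref{altmann96}, then arguing case by case that every non $2$-face or non $3$-face appearing in $Q(R)^{c}$ admits an auxiliary vertex through which $t$ is transferred) is exactly the approach the paper takes. However, what you have written is a plan rather than a proof: you explicitly defer "the bulk of the work" to a casework you do not carry out, and that casework is the actual content of the theorem. The paper works through five distinct cases ($|C_2|=n-2$, $|C_1|=1$, $|C_2|=1$ with $|C_1|=1$, $|C_1|=m-2=1$, $|C_2|=n-3$, $|C_1|=2$), in each producing the required lattice or non-lattice vertices of $Q(R)^c$ and the chain of $2$- and $3$-faces that forces $V(R)/\CC(1,\ldots,1)$ to vanish.

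Beyond incompleteness, there is a concrete error in your combinatorial reduction. You claim that outside case (1) "at most one pair from $\{(f_{n-1},f_n),(e_{m-1},e_m),(\mathfrak{c},e_1),(\mathfrak{c},f_1)\}$ fails to span a $2$-face," but when $|C_1|=|C_2|=1$ with $m,n\ge 3$, Proposition \ref{obvious}(2) and its mirror give that both $(\mathfrak{c},e_1)$ and $(\mathfrak{c},f_1)$ fail simultaneously (and this is not the excluded case (1)). The paper recognizes this and handles it explicitly by observing that in that situation the vertex $\overline{e_i}$ and two vertices $\overline{f_j},\overline{f_{j'}}$ force $(e_i,f_j,f_{j'})$ to span a $3$-face bridging the two bad pairs — this is precisely the subcase recovering the Bigdeli--Herzog--Lu result. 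Your sketch does not anticipate the possibility of two bad pairs meeting at the common ray $\mathfrak{c}$, so the "stitching" argument you gesture at would need to be reworked to handle it; as written, a vector $\bar t$ assigning one value on the star of $\overline{e_1}$ and another on the star of $\overline{f_1}$ is not obviously excluded by your outline.
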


\begin{proof}
The first case follows from \hyperref[obvious]{Proposition \ref*{obvious}} (3). For the other cases, we study the non 2-faces and 3-faces appearing in the compact part of $Q(R)$ utilizing the previous two propositions.  First of all, note that there exists no case such as two 2-faces connected only by a common lattice vertex in $Q(R)$. This is because, it would mean that there exist four non 2-faces and this is impossible for our bipartite graph $G$.\\

\noindent $\bullet$ Assume that $|C_2| =n-2$. We consider the non 2-face $(f_{n-1},f_n)$ in $Q(R)$. This means that $R_{n-1} \geq 1$ and $R_{n} \geq 1$. This implies that either there exists $i \in [m]$ such that $R_i \geq 1$ or there exists $m+j \in C_2$ such that $R_{m+j} \leq -1$ i.e.\ $\mathfrak{c} \in Q(R)$.
\begin{enumerate}
\item{$R_i \geq 1$:} Suppose that $R$ evaluates zero or negative on all other extremal rays except $e_i , f_{n-1}$ and $f_n$. Then $\overline{e_i}$ is not a lattice vertex in $Q(R)$ and $(e_i,f_{n-1})$ and $(e_i, f_{n})$ are 2-faces. If $e_i$ is not an extremal ray, i.e.\ if $|C_1| = m-1$, then $\overline{\mathfrak{c}}$ is not a lattice vertex in $Q(R)$ and $(\mathfrak{c},f_{n-1})$ and $(\mathfrak{c}, f_{n})$ are 2-faces by \hyperref[obvious]{Proposition \ref*{obvious}} (2). Suppose now that there exists another $i' \in [m] \backslash \{i\}$ such that $R_{i'} \geq 1$. If $\overline{e_i}$ and $\overline{e_{i'}}$ are not lattice vertices, we are done. If at least one of them is a lattice vertex, then we check if $(e_i, e_{i'})$ spans a 2-face. If it does span a 2-face, then we obtain the 3-faces $(e_i, e_{i'},f_{n-1})$ and $(e_i, e_{i'},f_{n})$. If it does not span a 2-face, then $|C_1|= m-2$ and let $e_i = e_{n-1}$ and $e_{i'} = e_n$ by \hyperref[obvious]{Proposition \ref*{obvious}} (1). In that case, $\overline{\mathfrak{c}}$ is a non-lattice vertex and we obtain the 3-faces $(\mathfrak{c}, e_{i}, f_j)$ where $i \in \{m-1, m\}$ and $j \in \{n-1,n\}$ as in the figure below.  \\
\item{$R_{m+j} \leq -1$:} We need to examine the case where $\overline{\mathfrak{c}}$ is a lattice vertex. Then there exists $i \in C_1$ such that $R_i \geq 1$, i.e.\ $e_i \in \sigma_G^{(1)}$. By \hyperref[obvious]{Proposition \ref*{obvious}} (2), $(\mathfrak{c},e_i)$ spans a 2-face. By \hyperref[non3]{Proposition \ref*{non3}} (4), $(\mathfrak{c}, e_i, f_{n-1})$ and $(\mathfrak{c}, e_i , f_n)$ span 3-faces, since $G$ is connected and thus $n\geq 3$.
\end{enumerate}
\begin{center}
\begin{tikzpicture}[scale=0.9,every path/.style={>=latex},every node/.style={draw,circle,fill=black,scale=0.4}]
  \node      [left,draw=none, circle, fill=none, scale=2]      (a) at (-2,0)  {$\overline{e_{m-1}}$};
  \node        [left,draw=none, circle, fill=none, scale=2]      (b) at (-1.5,1.5)  {$\overline{f_{n-1}}$};
  \node       [right,draw=none, circle, fill=none, scale=2]        (d) at (1.5,1.5) {$\overline{f_{n}}$};
  \node       [below,draw=none, circle, fill=none, scale=2]        (e) at (0,-1) {$\overline{\mathfrak{c}}$};
  
    \node      [draw=red, circle, fill=red, scale=1]      (a) at (-2,0)  {};
  \node        [draw=black, circle, fill=black, scale=1]      (b) at (-1.5,1.5)  {};
  \node      [draw=black, circle, fill=black, scale=1]       (d) at (1.5,1.5) {};
  \node    [draw=black, circle, fill=black, scale=1]       (e) at (0,-1) {};

\node  [right,draw=none, circle, fill=none, scale=2]         (c) at (2,0)  {$\overline{e_{m}}$};
\node     [draw=black, circle, fill=black, scale=1]       (c) at (2,0)  {};
%  \node      [draw=none, fill=white, scale=4]      (a1) at (-0.7,0.8)  {$t$};

 % \node      [draw=none, fill=white, scale=4]      (a4) at (0.4,1.2)  {$t'=t$};
  \node      [draw=none, fill=none, scale=0.5]      (c1) at (-2,3.5)  {};
\node      [draw=none, fill=none, scale=0.5]      (c2) at (2,3.5)  {};

    \draw[->,thick] (b) edge (c1);
    \draw[->,thick] (d) edge (c2);
    \draw[green,thick] (a) edge (b);
    \draw[dashed,thick,red] (d) edge (b);

 %\draw [dashed,red] (b) edge (e);
 %\draw [dashed,red] (b) edge (d);

%\draw [-,green] (c) edge (d);
%\draw [-,green] (c) edge (e);
%\draw [-,green] (a) edge (c);
%\draw [-,green] (a) edge (d);

\draw [thick,green] (a) edge (e);
\draw [thick,green] (b) edge (e);
\draw [thick,green] (c) edge (e);
\draw [thick,green] (d) edge (e);

\draw [thick,green] (a) edge (d);
\draw [dashed,thick,red] (c) edge (a);
\draw [thick,green] (c) edge (b);
\draw [thick,green] (c) edge (d);

\path[draw, fill=green!20, opacity=.3] (0,-1)--(2,0)--(-1.5,1.5)--cycle;
\path[draw, fill=green!20, opacity=.6] (0,-1)--(2,0)--(1.5,1.5)--cycle;
\path[draw, fill=green!20, opacity=.3] (0,-1)--(-2,0)--(1.5,1.5)--cycle;
\path[draw, fill=green!20, opacity=.6] (0,-1)--(-2,0)--(-1.5,1.5)--cycle;

%\draw [-,yellow] (a) edge (e);
%\draw [-,yellow] (0,-0.1) edge (-2.3, -1.8);
%\draw [-,yellow] (2.3,-1.8) edge (-2.3, -1.8);

%\draw [-,cyan!40]   (c)  edge (d);
%\draw [-,cyan!40] (d)  edge (e);
%\draw [-,cyan!40] (e) edge (c);

\end{tikzpicture}
\end{center}

\noindent $\bullet$ Assume that $|C_1|=1$. We know that $(\mathfrak{c},e_1)$ is a non 2-face by \hyperref[obvious]{Proposition \ref*{obvious}} (2). Assume that there exists $R \in M$ that evaluates on the extremal rays $\mathfrak{c}$ and $e_1$ bigger than or equal to one. Then there exists $m+j \in N(C_1)$ such that $R_{m+j} \geq 1$. Assume that $\overline{f_j}$ is a lattice vertex, then there exists $m+j' \in N(C_1)$ such that $\overline{f_{j'}} \in Q(R)$. Now, we must examine if $(f_{j'}, f_j)$, $(f_{j'}, e_1)$ and $(f_{j'}, \mathfrak{c})$ are 2-faces.  Since we excluded the case where $|C_2| = n-2$ and we have that $\{m+j, m+j'\} \in N(C_1) = U_2 \backslash C_2$, by \hyperref[non3]{Proposition \ref*{non3}}, $(\mathfrak{c},f_{j}, f_{j'})$ and $(e_1, f_j, f_{j'})$ span 3-faces. 

\begin{center}

\begin{tikzpicture}[baseline=1cm,scale=0.7,every path/.style={>=latex},every node/.style={draw,circle,fill=black,scale=0.4}]
  \node     [below,draw=none, circle, fill=none, scale=2]   (a) at (0,0)  {$f_j$};
  \node       [left,draw=none, circle, fill=none, scale=2]     (b) at (-3,2)  {$\overline{\mathfrak{c}}$};
%  \node            (c) at (-2,-2) {$\overline{a_j}$};
  \node     [right,draw=none, circle, fill=none, scale=2]       (d) at (3,2) {$\overline{e_1}$};
% \node       [draw=cyan!40, circle, fill=white, scale=3]     (e) at (3,0) {$\overline{f_1}$};
  \node       [above,draw=none, circle, fill=none, scale=2]      (f) at (0,4) {$\overline{f_{j'}}$};

  \node      [draw=black, circle, fill=black, scale=1]      (a) at (0,0)  {};
  \node        [draw=black, circle, fill=black, scale=1]      (b) at (-3,2)  {};
%  \node            (c) at (-2,-2) {$\overline{a_j}$};
  \node       [draw=black, circle, fill=black, scale=1]         (d) at (3,2) {};
% \node       [draw=cyan!40, circle, fill=white, scale=3]     (e) at (3,0) {$\overline{f_1}$};
  \node        [draw=black, circle, fill=black, scale=1]        (f) at (0,4) {};

    \draw[dashed,thick,red] (d) edge (b);
\path[draw, fill=green!20, opacity=.3] (-3,2)--(0,0)--(0,4)--cycle;
\path[draw, fill=green!20, opacity=.3] (0,4)--(0,0)--(3,2)--cycle;

    \draw[thick,green] (a) edge (b);

 \draw [-,green] (f) edge (b);
 \draw [-,green] (f) edge (d);
 \draw [-,green] (f) edge (a);

\draw [thick,green] (a) edge (d);

\end{tikzpicture}

\end{center}

\begin{enumerate}
\item $|C_2|=1$: Consider the non 2-face $(\mathfrak{c},f_1)$ in $Q(R)$. Then there exists $i \in [m] \backslash \{1\}$ such that $\overline{e_i} \in Q(R)$. Similarly to the case of non 2-face $(\mathfrak{c}, e_1)$, either $\overline{e_i}$ is a non-lattice point or $\overline{e_i}$ is a lattice point and there exists another lattice point $e_{i'}$ in $Q(R)$. Since $|C_1| \neq m-2$ and $|C_1| \neq 2$, $(e_i, e_{i'},f_1)$ and $(\mathfrak{c}, e_{i}, e_{i'})$ span 3-faces. In particular if both non 2-faces $(\mathfrak{c},e_1)$ and $(\mathfrak{c},f_1)$ appear in $Q(R)$ then $(e_i,f_j,f_{j'})$ spans a 3-face and it suffices to conclude this part of the proof. Note that this is the case which was studied in \cite{herzog2015} for $m=n$. 
\item $|C_1|=m-2=1$: Consider the non 2-face $(e_2,e_3)$ in $Q(R)$. The pairs $(e_1,e_3)$, $(e_3, \mathfrak{c})$, $(e_1,e_2)$ and $(\mathfrak{c},e_2)$ do span 2-faces. Furthermore we have $R_4 + \ldots + R_{n+3} \geq 3$. This means that there exists $ j \in [n]$ such that $R_{m+j} \geq 1$. The ray $f_j$ is an extremal ray generator, otherwise $(\mathfrak{c}, e_1)$ spans a 2-face. Since we excluded the case where $|C_2|=1$, $(\mathfrak{c},f_j)$, $(e_1,f_j)$, $(e_2,f_j)$ and $(e_3,f_j)$ span 2-faces. Additionally $(\mathfrak{c},e_2,f_j)$, $(e_1,e_2,f_j)$, $(\mathfrak{c},e_3,f_j)$ and $(e_1,e_3,f_j)$ span 3-faces.
\end{enumerate}

\noindent $\bullet$ Assume that $|C_2|=n-3$. For the non 3-face $(f_{n-2},f_{n-1},f_n)$, we refer to second case of the proof of \hyperref[rigidcomplete]{Theorem \ref*{rigidcomplete}}. There is a small detail here that one needs to pay attention to. If $|C_1|=m-1$, then $e_m$ is not an extremal ray generator of $\sigma_G$. But then the deformation degree $R \in M$ with $R_m = R_{m+n-2} + R_{m+n-1} + R_ {m+n}$ evaluates bigger than or equal to one on $\mathfrak{c} \in \sigma^{(1)}_G$.  The triples $(\mathfrak{c},f_j, f_k)$ are 3-faces where $j ,k \in \{n-1,n-2,n\}$.  \\

\noindent $\bullet$ Assume that $|C_1|=2$. For the non 3-face $(\mathfrak{c},e_1,e_2)$, we have $R_1 \geq 1$, $R_2 \geq 1$ and $R_3 + \ldots + R_m - R_{m+1} \ldots - R_{m+t_2} \geq 1$. This implies that $R_{m+t_2+1} + \ldots + R_{m+n} \geq 3$ where $t_2 = |C_2|$ as before. Then there exists $j \in N(C_1)$ such that $R_{m+j} \geq 1$. Note that if $f_j$ is not an extremal ray generator then  $(\mathfrak{c},e_1,e_2)$ spans a 3-face. Otherwise, $(e_1,e_2,f_j)$ is always a 3-face. The pair $(c,f_j)$ is not a 2-face if and only if $j \in C_2$ and $|C_2|=1$, which is impossible.
\end{proof}

\begin{ex}
Let $G \subset K_{4,5}$ be the connected bipartite graph constructed by removing two edges connected to the vertex $\{1\}$ in $U_1$. This means that there exists a two-sided first independent set $C=C_1 \sqcup C_2 \in \Ione$ with $|C_1|=1$ and $|C_2| = 2$. By \hyperref[non3]{Proposition \ref*{non3}}, $(f_3, f_{4},f_5)$ does not span a three-dimensional face and by \hyperref[obvious]{Proposition \ref*{obvious}} (2), $(e_1, \mathfrak{c})$ does not span a two-dimensional face. In particular we observe that in \hyperref[rep]{Figure \ref*{rep}} the second graph is the intersection subgraph associated to the extremal ray set $(f_3, f_{4},f_5)$ and $(\mathfrak{c},e_1)$. Let us for example consider the compact part of crosscut $Q(R)$ for $R=[2,0,0,0,0,-1,1,1,1] \in M$ as in the figure below. Except from the triples $(\mathfrak{c},e_1,f_3)$ and $(f_3,f_4,f_5)$, all triples in this figure span 3-faces. Therefore $T^1_{\TV(G)}(-R) = 0$.
 \begin{center}
\begin{tikzpicture}[baseline=1,scale=1.6,every path/.style={>=latex},every node/.style={draw,circle,fill=white,scale=0.7}]
  \node       [fill =yellow]     (1) at (0,2.6)  {1};
  \node            (2) at (0,2.1)  {2};
  \node          (m-1) at  (0,1.6) {3};
  \node           (m) at (0,1.1) {4};

  \node       [fill=yellow,rectangle]     (t1) at (1.2,2.6)  {5};
  \node       [fill=yellow,rectangle]      (t2) at (1.2,2.1)  {6};
  \node       [rectangle]      (t3) at (1.2,1.6) {7};

  \node           [rectangle]  (m+n-2) at (1.2,1.1) {8};
  \node          [rectangle]   (m+n-1) at  (1.2,0.6) {9};

 \draw[-] (2) edge (t1);
 \draw[-] (m-1) edge (t1);
 \draw[-] (m) edge (t1);

 \draw[-] (2) edge (t2);
 \draw[-] (m-1) edge (t2);
 \draw[-] (m) edge (t2);

 \draw[-] (2) edge (t3);
 \draw[-] (m-1) edge (t3);
 \draw[-] (m) edge (t3);

 \draw[-] (2) edge (m+n-2);

 \draw[-] (m-1) edge (m+n-2);
 \draw[-] (m) edge (m+n-2);

 \draw[-] (m+n-2) edge (1);
 \draw[-] (m+n-1) edge (1);
 \draw[-] (t3) edge (1);

 \draw[-] (m+n-1) edge (2);
 \draw[-] (m+n-1) edge (m-1);
 \draw[-] (m+n-1) edge (m);

\end{tikzpicture}
\hspace{1cm}
\begin{tikzpicture}[baseline=1,scale=1.6,every path/.style={>=latex},every node/.style={draw,circle,fill=white,scale=0.7}]
  \node       [fill =none]     (1) at (0,2.6)  {1};
  \node            (2) at (0,2.1)  {2};
  \node             (m-1) at  (0,1.6) {3};
  \node           (m) at (0,1.1) {4};

  \node       [fill=none,rectangle]     (t1) at (1.2,2.6)  {5};
  \node       [fill=none,rectangle]      (t2) at (1.2,2.1)  {6};
  \node       [rectangle]      (t3) at (1.2,1.6) {7};
  \node           [rectangle]  (m+n-2) at (1.2,1.1) {8};
  \node          [rectangle]   (m+n-1) at  (1.2,0.6) {9};

 \draw[-] (2) edge (t1);
 \draw[-] (m-1) edge (t1);
 \draw[-] (m) edge (t1);

 \draw[-] (2) edge (t2);
 \draw[-] (m-1) edge (t2);
 \draw[-] (m) edge (t2);

\end{tikzpicture}
\hspace{1cm}
\begin{tikzpicture}[scale=0.7,every path/.style={>=latex},every node/.style={draw,circle,fill=black,scale=0.4}]
  \node      [above,draw=none, circle, fill=none, scale=2]      (a) at (0,0)  {$f_3$};
    \node      [draw=red, circle, fill=red, scale=1]    (a) at (0,0)  {};
  \node        [left, draw=none, circle, fill=none, scale=2]      (b) at (-3,2)  {$\overline{\mathfrak{c}}$};
    \node        [draw=black, circle, fill=black, scale=1]    (b) at (-3,2)  {};
  \node        [right, draw=none, circle, fill=none, scale=2]        (d) at (3,2) {$\overline{e_1}$};
    \node       [draw=black, circle, fill=black, scale=1]    (d) at (3,2) {};
  \node         [left, draw=none, circle, fill=none, scale=2]      (f) at (-2,-3) {$\overline{f_5}$};
    \node    [draw=black, circle, fill=black, scale=1]      (f) at (-2,-3) {};
  \node      [right, draw=none, circle, fill=none, scale=2]       (g) at (2,-3) {$\overline{f_4}$};
    \node        [draw=black, circle, fill=black, scale=1]       (g) at (2,-3) {};

    \draw[dashed,red] (d) edge (b);
    \draw[-,green] (b) edge (g);
    \draw[-,green] (d) edge (g);
    \draw[-,green] (f) edge (g);
    \draw[-,green] (a) edge (g);
\draw[dashed,red] (0,-0.3) edge (1.7,-2.8);
\draw[dashed,red] (-1.7,-2.8) edge (1.7,-2.8);
\draw[dashed,red] (-1.7,-2.8) edge (0,-0.3);
    \draw[-,green] (f) edge (a);
    \draw[-,green] (a) edge (b);

  \draw [-,green] (f) edge (b);
 \draw [-,green] (f) edge (d);
 \draw [dashed,green] (f) edge (a);

\draw [-,green] (a) edge (d);
\path[draw, fill=green!20, opacity=.65] (0,0)--(-2,-3)--(3,2)--cycle;
\path[draw, fill=green!20, opacity=.65] (2,-3)--(0,0)--(-3,2)--cycle;
\path[draw, fill=green!20, opacity=.28] (-2,-3)--(0,0)--(-3,2)--cycle;
\path[draw, fill=green!20, opacity=.28] (2,-3)--(0,0)--(3,2)--cycle;
\path[draw, dashed, fill=red!20, opacity=.0] (2,-3)--(0,0)--(-2,-3)--cycle;
\end{tikzpicture}


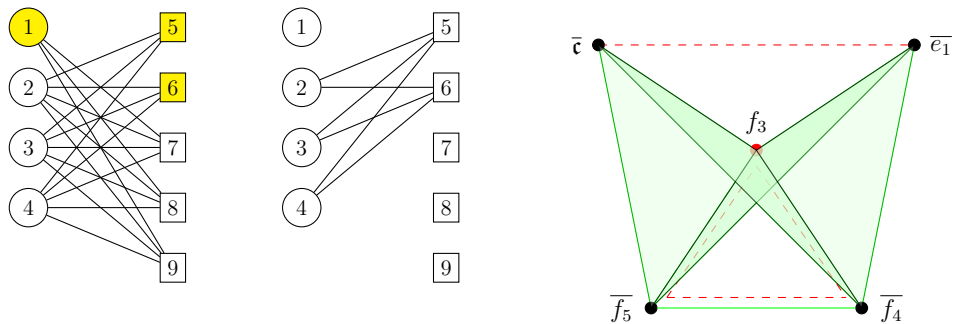
\captionof{figure}{Examining the rigidity through bipartite graphs.}\label{rep}
\end{center}

\end{ex}
\section*{Acknowledgement}
\noindent This paper is part of author's Ph.D. thesis which has been published online at Freie Universit\"at Berlin Library. The author wishes to express her gratitude to her advisor Klaus Altmann for suggesting the problem and many helpful conversations. The author also thanks Berlin Mathematical School for the financial support.


\begin{thebibliography}{00}



\bibitem[Alt(95)]{minkowskialtmann}
K. Altmann, 
\emph{Minkowski sums and homogeneous deformations of toric varieties}, 
 Tohoku Math. J. (2) 47 (1995), no. 2, 151--184. doi:10.2748/tmj/1178225590. 



\bibitem[Alt(97)]{invencionesaltmann}
K. Altmann,
\emph{The versal deformations of an isolated, toric Gorenstein singularity},
Invent. math. 128 (1997), 443-479.


\bibitem[Alt(00)]{altmann96}
 K. Altmann,
  \emph{One-parameter families containing three-dimensional toric Gorenstein singularities},
Explicit Birational Geometry of 3-Folds (ed. Alessio Corti, Miles Reid), p. 21-50, London Mathematical Society Lecture Note Series. 281, Cambridge University Press, 2000.


\bibitem[BHL(15)]{herzog2015}
 M. Bigdeli, J. Herzog, and D. Lu,
\emph{Toric rings, inseparability and rigidity},
Multigraded Algebra and Applications, The 24th National School on Algebra, Springer Proceedings in Mathematics and Statistics,  p. 41-75, 2018.

\bibitem[CLS(10)]{toricbook}
David A. Cox, John B. Little and Henry K. Schenck,
\emph{Toric varieties}, English. Providence, RI:  American Mathematical Society (AMS), 2011. ISBN: 978-0-8218-4819-7/hbk.

\bibitem[DGPS(15)]{singular}
W. Decker, G.-M. Greuel, G. Pfister and H. Sch\"{o}nemann, \emph{Singular 4-0-2 - A computer algebra system for polynomial computations}. http://www.singular.uni-kl.de. 2015.

\bibitem[Fil(18)]{filip18}
M. Filip, \emph{Hochschild cohomology and deformation quantization of affine toric varieties}, J. Alg. 508 (2018), 188–214.

\bibitem[GJ(00)]{polymake}
E. Gawrilow and M. Joswig, \emph{polymake: a framework for analyzing convex polytopes}, Polytopes - combinatorics and computation (Oberwolfach, 1997), 43-73, DMW Sem., 29, Birkh\"{a}user, Basel, 2000. MR1785292 (2001f:52033).

\bibitem[GK(64)]{firstrigidpaper}
H. Grauert and H. Kerner, \emph{Deformationen von Singularit\"{a}ten komplexer R\"{a}ume}, Math. Annalen 153 (1964), 236-260.

\bibitem[HHO(18)]{binomialbook}
J. Herzog, T. Hibi, H. Ohsugi, \emph{Binomial ideals}, Springer Graduate Texts in Mathematics, ISBN: 978-3-319-95347-2, 2018. doi: 10.1007/978-3-319-95349-6.

\bibitem[HO(99)]{hibiwalk}
H. Ohsugi and T. Hibi,
\emph{Toric ideals generated by quadratic binomials}, in Journal of Algebra, Volume 218, Issue 2, 1999, Pages 509-527, ISSN 0021-8693, doi: 10.1006/jabr.1999.7918. 

\bibitem[KL(71)]{segreler}
S. Kleiman and J. Landolfi, 
\emph{Geometry and deformation of special Schubert varieties},
Compos. Math. 23, no. 4 (1971), 407-434.

\bibitem[Por(19)]{portakal19}
I. Portakal, \emph{Rigid toric bipartite graph}, GitHub, \href{https://github.com/iremportakal/rigid_toric_bipartite_graphs}{Script}, 2019.

\bibitem[Por(20)]{portakal2}
I. Portakal,
\emph{Rigid toric matrix Schubert varieties}, ArXiv: 2001.11949.

\bibitem[Sch(71)]{schlessingerrigid}
M. Schlessinger,  
\emph{Rigidity of quotient singularities}, Invent. math. 14 (1971), 17-26.

\bibitem[VV(05)]{valenciavillareal}
Carlos E. Valencia, Rafael H. Villarreal,
\emph{Explicit representations by halfspaces of the edge cone of a graph},
Int. J. Contemp. Math. Sci., Vol. 1, 2006, no. 2, 53 - 66.


\end{thebibliography}
\end{document}